\newcommand{\bq}{\begin{equation}}
\newcommand{\eq}{\end{equation}}
\newtheorem{theorem}{Theorem}
\newtheorem{corollary}[theorem]{Corollary}
\newtheorem{proposition}[theorem]{Proposition}
\newtheorem{lemma}[theorem]{Lemma}
\newtheorem{definition}[theorem]{Definition}
\newtheorem{example}[theorem]{Example}
\newtheorem{remark}[theorem]{Remark}
\begin{document}
\title{Nash equilibirum and the Legendre transform in optimal stopping games with one dimensional diffusions}
\author{Jenny Sexton\footnote{School of Mathematics, University of Manchester. Oxford Road, {\sc Manchester, M13 9PL, United Kingdom.} E-mail: jennifer.sexton@postgrad.manchester.ac.uk}}
\date{\today}
\maketitle

\begin{abstract}
We show that the value function of an optimal stopping game driven by a one-dimensional diffusion can be characterised using the extension of the Legendre transform introduced in \cite{Peskir}. It is shown that under certain integrability conditions, a Nash equilibrium of the optimal stopping game can be derived from this extension of the Legendre transform. This result is an analytical complement to the results in \cite{Peskir} where the `duality' between a concave biconjugate which is modified to remain below an upper barrier and a convex biconjugate which is modified to remain above a lower barrier is proven by appealing to the probabilistic result in \cite{Peskir2}. The main contribution of this paper is to show that, for optimal stopping games driven by a one-dimensional diffusion, the semiharmonic characterisation of the value function may be proven using only results from convex analysis.
\end{abstract}

\begin{tabbing}
{\footnotesize Keywords: optimal stopping, convex analysis.}\\

{\footnotesize Mathematics Subject Classification (2000): 26A51, 60G40, 91A15, 91G80}

\end{tabbing}
 
\section{Introduction}
\setcounter{equation}{0}

This paper examines the connection between convex analysis and optimal stopping of one dimensional diffusions. The connection between the study of superharmonic functions and optimal stopping problems dates back to Dynkin \cite{Dynkin2} where the solution to an optimal stopping problem of the type 
\begin{equation*}
V\left( x\right) =\sup_{\tau }E_{x}\left[ G\left( X_{\tau }\right) \right]
\end{equation*}
(where $X$ is a Markov process) was first characterised as the smallest superharmonic majorant of the gains function $G$. When $X$ is a one-dimensional diffusion, the corresponding superharmonic functions can been characterised in terms of a generalised type of concavity (see \cite{DY} pp. 115). This so called `superharmonic characterisation' of the value function is discussed in further detail in \cite{PS} Chapter IV Section 9, to illustrate the properties of the solution to certain free-boundary problems associated with optimal stopping problems. More recently, the change of time and scale technique underpinning the superharmonic characterisation was `re-introduced' in \cite{DK}. Furthermore, the connection between the superharmonic characterisation and convex analysis has been highlighted in \cite{Peskir}.

The minimax version of this problem is referred to as an optimal stopping (or Dynkin) game. The sup-player selects a stopping time $\tau $ with the aim of maximising the functional $R_{x}\left( \tau ,\sigma \right) =E_{x}\left[ G\left( X_{\tau }\right) \mathbb{I}_{\left[ \tau \leq \sigma \right]}+H\left( X_{\sigma }\right) \mathbb{I}_{\left[ \tau >\sigma \right] }\right]$ while the inf-player selects a stopping time $\sigma $ with the aim of
minimising the same functional. Optimal stopping games are explained in more detail
in Section \ref{subsection diffusions} below. A variant of this optimal stopping
game was first studied by Dynkin \cite{Dynkin3} using martingale based methods. These
problems have also been approached via variational inequalities in \cite{BF2}
and \cite{BF3}. General conditions which ensure that such a saddle point exists have been studied in \cite{EP} and \cite{EV}. Optimal stopping games have been applied to solve problems in finance, see for example, \cite{BK}, \cite{Gapeev}, \cite{Kifer}, \cite{KK},
\cite{KKS} or \cite{Ky}.  

In \cite{Peskir2}, the superharmonic characterisation of the solution to optimal stopping problems has been extended to optimal stopping games where it is shown that the value function is a `semiharmonic' function. The value function is shown to be the smallest function which is superharmonic and dominates $G$ on the set $\{V< H\}$ as well as the largest function which is subharmonic and minorises $H$ on the set $\{V>G\}$. It is shown in \cite{Peskir2} that a single function with both these properties exists if and only if the optimal stopping game exhibits a saddle point. When the underlying Markov process is a one dimensional diffusion, these dual problems have been formulated in terms of an extension of the Legendre transform in \cite{Peskir}. 

The purpose of this paper is to establish the `semiharmonic characterisation' of the value function of optimal stopping games driven by one dimensional diffusions using purely analytical techniques. This requires us to modify some of the basic objects of convex analysis to account for the additional constraint on the domain of the smallest superharmonic majorant (resp. largest subharmonic minorant). The next section formally introduces optimal stopping games and sign-posts the contents of the rest of this paper. 

\section{Semiharmonic characterisation}

\label{subsection diffusions}

Consider a stochastic basis $\left( \Omega ,\mathcal{F},\mathbb{F},\left\{
P_{x}\right\} _{x\in I}\right) $ supporting a one dimensional diffusion $X=\left( X_{t}\right) _{t\geq 0}$ with $X_{0}=x\in I\subseteq \mathbb{R}$ under $P_{x}$. The level passage times of $X$ will be denoted by $T_{y}=\inf
\left\{ \left. t\geq 0\,\right\vert \,X_{t}=y\right\} $. The state space $I$
of $X$ is an open interval $I=\left( a,b\right) $ and
the boundaries of $I$ are natural, i.e. $P_{x}\left( T_{a}<\infty \right)
=P_{x}\left( T_{b}<\infty \right) =0$ for all $x\in I$. The diffusion $X$ is
also assumed to be regular in the sense that for all $x\in I$, $P_{x}\left( T_{y}<\infty
\right) >0$ for some $y\in(x,b)$.

For a given discount rate $r>0$, a Borel measurable function $U:I\rightarrow \mathbb{R}$ is $r$-superharmonic with respect to $X$ if
\begin{equation*}
U\left( x\right) \geq E_{x}\left[ e^{-r\tau} U\left( X_{\tau }\right) 
\mathbb{I}_{\left[ \tau <\infty \right] }\right]
\end{equation*}
for all $x\in I$ and all stopping times $\tau $. The function $U$ is $r$%
-subharmonic with respect to $X\,$if $U\left( x\right) \leq E_{x}\left[ e^{-r\tau }
U\left( X_{\tau }\right) \mathbb{I}_{\left[ \tau <\infty \right] }%
\right] $ for all $x\in I$ and all stopping times $\tau $. Moreover, $U$ is
referred to as $r$-harmonic, if it is both $r$-superharmonic and $r$%
-subharmonic.

The generator of $X$ is denoted $\mathbb{L}_{X}$ and under some additional
regularity conditions (see \cite{BS} Section 4.6) can be expressed as
\begin{equation*}
\mathbb{L}_{X}f\left( x\right) =\mu \left( x\right) \frac{d}{dx}f\left(
x\right) +D\left( x\right) \frac{d^{2}}{dx^{2}}f\left( x\right)
\end{equation*}
for $x\in I$ where $\mu $ is the drift and $D>0$ is the diffusion
coefficient of $X$. Take a constant $r>0$ and consider the ODE
\begin{equation}
\mathbb{L}_{X}f\left( x\right) =rf\left( x\right)  \label{ODE}
\end{equation}
for $x\in I$. This ODE has two linearly independent positive solutions, denoted $\varphi$
and $\psi$. As the boundaries of $I$ are natural, $\varphi $ and $\psi$
may be taken such that $\varphi$ is increasing and $\psi $ is decreasing
with
\begin{equation*}
\begin{array}{cc}
\varphi \left( a+\right) =\psi \left( b-\right) =0\qquad ,\qquad & \varphi
\left( b-\right) =\psi \left( a+\right) =+\infty .
\end{array}
\end{equation*}
The functions $\varphi$ and $\psi$ are shown in \cite{RW} V.50 to be
continuous, strictly monotone and strictly convex. Furthermore, for $y\in I$
, the process $( e^{-r\left( t\wedge T_{y}\right) }\varphi(
X_{t\wedge T_{y}})) _{t\geq 0}$ is a $( P_{x},\mathcal{F}_{t\wedge T_{y}})
$-martingale for all $y>x$ while $( e^{-r\left(
t\wedge T_{y}\right) }\psi (X_{t\wedge T_{y}}))_{t\geq 0}$
is a $( P_{x},\mathcal{F}_{t\wedge T_{y}})$-martingale for
all $y\leq x $. For $x,y\in I$ the Laplace transforms of the first passage
times may be expressed as
\begin{equation*}
E_{x}\left[ e^{-rT_{y}}\mathbb{I}_{[T_y < \infty]}\right] =\left\{
\begin{array}{cc}
\frac{\psi \left( x\right) }{\psi \left( y\right) }\smallskip & \mathrm{for}\,x>y, \\
\frac{\varphi \left( x\right) }{\varphi \left( y\right) } & \mathrm{for}\,x\leq y.
\end{array}
\right.
\end{equation*}

\begin{definition}
\label{def F-concave}Let $J:I\rightarrow \mathbb{R}$ be a monotone
function. A Borel measurable function $f:I\rightarrow \mathbb{R}$ is
$J$-concave if
\begin{equation*}
f\left( x\right) \geq f\left( y\right) \frac{J\left( z\right) -J\left(
x\right) }{J\left( z\right) -J\left( y\right) }+f\left( z\right) \frac{
J\left( x\right) -J\left( y\right) }{J\left( z\right) -J\left( y\right) }.
\end{equation*}
for $x\in \left[ y,z\right] \subseteq I$. The $J$-derivative of $f$ is
denoted
\begin{equation*}
\frac{d}{dJ}f\left( x\right) :=\lim_{y\downarrow x}\frac{f\left( y\right)
-f\left( x\right) }{J\left( y\right) -J\left( x\right) }.
\end{equation*}
\end{definition}

Define a pair of strictly increasing functions, $F:I \rightarrow (0,+\infty)$ and $\widetilde{F}:I \rightarrow (-\infty,0)$ using
\begin{equation}
F\left( x\right) =\frac{\varphi \left( x\right) }{\psi \left( x\right) }
\qquad ,\qquad  \widetilde{F}\left( x\right) =-\frac{\psi \left( x\right) }
{\varphi \left( x\right) }=-\frac{1}{F\left( x\right) }.
\label{F function}
\end{equation}
It is well-known (see \cite{Dyn} Theorem 16.4) that a Borel measurable
function $U$ is $r$-superharmonic if and only if $U/\psi $ is $F$-concave,
or equivalently, $U/\varphi $ is $\widetilde{F}$-concave. For $a<y\leq x\leq
z<b$ the Laplace transforms of the exit times from the open set $\left(
y,z\right) \subset I$ are
\begin{align}
E_{x}\left[ e^{-rT_{y}}\mathbb{I}_{\left[ T_{y}<T_{z}\right] }\right] &=
\frac{\psi \left( x\right) }{\psi \left( y\right) }\frac{F\left( z\right)
-F\left( x\right) }{F\left( z\right) -F\left( y\right) }=\frac{\varphi
\left( x\right) }{\varphi \left( y\right) }\frac{\widetilde{F}\left(
z\right) -\widetilde{F}\left( x\right) }{\widetilde{F}\left( z\right) -
\widetilde{F}\left( y\right) }, \notag \\
E_{x}\left[ e^{-rT_{z}}\mathbb{I}_{\left[ T_{y}>T_{z}\right] }\right] &=
\frac{\psi \left( x\right) }{\psi \left( z\right) }\frac{F\left( x\right)
-F\left( y\right) }{F\left( z\right) -F\left( y\right) }=\frac{\varphi
\left( x\right) }{\varphi \left( z\right) }\frac{\widetilde{F}\left(
x\right) -\widetilde{F}\left( y\right) }{\widetilde{F}\left( z\right) -
\widetilde{F}\left( y\right) }. \label{laplace}
\end{align}

Take a gains function $G: I \rightarrow \mathbb{R}$ which is upper semi-continuous
and such that for all $x \in I$,
\begin{equation}
P_x\left(\lim_{t\rightarrow\infty}e^{-rt}G\left(X_{t}\right)=c\right)=1\quad ,
\quad  E_{x}\left[ \sup_{t\geq 0}\left\vert e^{-rt} G\left( X_{t}\right)
\right\vert \right] <+\infty  \label{stopping assumptions}
\end{equation}
for some $c\in \mathbb{R}$. Section \ref{section stopping} examines the solution to the discounted optimal stopping problem
\begin{equation}
V\left( x\right) =\sup_{\tau }E_{x}\left[ e^{-r\tau} G\left( X_{\tau }\right) \right] \label{stopping problem - diffusion}
\end{equation}
for $x \in I$. Theorem 3.2 in \cite{Peskir} shows that when the value function $V$ defined in (\ref{stopping problem - diffusion}) coincides with the solution to the `dual problem'
\begin{equation}
\hat{V}=\inf_{U\in \mathrm{Sup}\left( G\right] }U  \label{dual stopping problem}
\end{equation}
where
\begin{equation*}
\mathrm{Sup}\left( G\right] :=\left\{ \left. U:I\rightarrow \left[ G,+\infty
\right) \,\right\vert \,U\text{ }\mathrm{is}\text{ }\mathrm{continuous}\text{
}\mathrm{and}\text{ }r\text{\textrm{-}}\mathrm{superharmonic}\right\}
\end{equation*}
the function $V$ can be expressed in terms of the concave biconjugate of $W:=(G/\psi)\circ F^{-1}$. The observation that when a finite optimal stopping time in (\ref{stopping problem - diffusion}) exists, the solutions to (\ref{stopping problem - diffusion}) and (\ref{dual stopping problem}) coincide is often referred to as the `superharmonic characterisation' of the value function (see Theorem 2.4 in \cite{PS} for a probabilistic proof). Theorem \ref{Theorem natural boundaries} is the converse of Theorem 3.2 in \cite{Peskir} in the sense that function $V$ is shown to be related to the concave biconjugate of $W$ without assuming that $V$ solves (\ref{dual stopping problem}). Furthermore, Theorem \ref{Theorem natural boundaries} provides a new proof that (\ref{stopping problem - diffusion}) and (\ref{dual stopping problem}) coincide.

The remainder of this section introduces optimal stopping games and the semiharmonic characterisation of the solution to such games as well as outlining the contents of the rest of this paper. Take a pair of gains functions $G$, $H$ that are continuous and for all $x\in I$ satisfy:
\begin{equation}
G\left(x\right) \leq H\left( x\right) \quad , \quad
E_{x}\left[ \sup_{t\geq 0}\left\vert e^{-rt} G\left( X_{t}\right)\right\vert
\right] <+\infty \quad , \quad E_{x}\left[ \sup_{t\geq 0}\left\vert e^{-rt}
H\left( X_{t}\right) \right\vert \right] <+\infty
\label{integ assumption}
\end{equation}
and
\begin{equation}
P_x\left(\lim_{t \rightarrow \infty} e^{-rt} G\left( X_{t} \right)  = \lim_{t
\rightarrow \infty} e^{-rt} H\left( X_{t} \right) \right) = 1.
\label{boundary assumption}
\end{equation}
Section \ref{section games} studies the solution to the infinite time horizon optimal stopping game with lower value $\underline{V}$ defined as
\begin{equation}
\underline{V}\left( x\right) =\sup_{\tau }\inf_{\sigma }E_{x}\left[ 
e^{-r\left( \tau \wedge \sigma \right) }\left(
G\left( X_{\tau }\right) \mathbb{I}_{\left[ \tau \leq \sigma \right]
}+H\left( X_{\sigma }\right) \mathbb{I}_{\left[ \tau >\sigma \right]
}\right) \right] ,
\label{lower value}
\end{equation}
and upper value $\overline{V}$ defined as
\begin{equation}
\overline{V}\left( x\right) =\inf_{\sigma }\sup_{\tau }E_{x}\left[ 
e^{-r\left( \tau \wedge \sigma \right) }\left(
G\left( X_{\tau }\right) \mathbb{I}_{\left[ \tau \leq \sigma \right]
}+H\left( X_{\sigma }\right) \mathbb{I}_{\left[ \tau >\sigma \right]
}\right) \right] .
\label{upper value}
\end{equation}
where $G$ and $H$ satisfy the assumptions outlined above. For ease of notation, the objective function is denoted
\begin{equation}
R_{x}\left( \tau ,\sigma \right) := E_{x}\left[ e^{-r\left( \tau
\wedge \sigma \right) }\left( G\left( X_{\tau
}\right) \mathbb{I}_{\left[ \tau \leq \sigma \right] }+H\left( X_{\sigma
}\right) \mathbb{I}_{\left[ \tau >\sigma \right] }\right) \right] . \label{game objective function}
\end{equation}
The optimal stopping game has a Stackelberg equilibrium if the upper and
lower values coincide, i.e. $\underline{V}(x) =\overline{V}(x) =:V(x)$ for all $x\in I$. In \cite{EP} and \cite{EV} it is shown via probabilistic means that this game exhibits a Stackelberg equilibrium when both (\ref{integ assumption}) and (\ref{boundary assumption})
hold. The assumptions in \cite{EV} are slightly more general, in which
case the Stackelberg equilibrium is determined by how the objective function
is specified at the natural boundaries.

A saddle point is a pair of stopping
times $\left( \tau ^{\ast },\sigma ^{\ast }\right) $ such that for any other
stopping times $\tau ,\sigma $
\begin{equation*}
R_{x}\left( \tau ,\sigma ^{\ast }\right) \leq R_{x}\left( \tau ^{\ast},
\sigma ^{\ast }\right) \leq R_{x}\left( \tau ^{\ast },\sigma \right)
\qquad \forall\, x \in I.
\end{equation*}
The optimal stopping game exhibits a Nash equilibrium if the game has saddle
point. In particular, existence of a Nash equilibrium implies a Stakelberg equilibrium
exists but the converse is not necessarily true. The result in \cite{EP} (which applies to more general processes) shows that, under the assumptions (\ref{integ assumption}) and
(\ref{boundary assumption}), the optimal stopping game described above has a Nash
equilibrium. 

Introduce a pair of dual problems
\begin{equation}
\hat{V}:=\inf_{U\in \mathrm{Sup}\left[ G,H\right) }U\qquad ,\qquad \check{V}
:=\sup_{U\in \mathrm{Sub}\left( G,H\right] }U  \label{dual problems}
\end{equation}
where the admissible sets of functions are
\begin{eqnarray*}
\mathrm{Sup}\left[ G,H\right) &=&\left\{ \left. U:I\rightarrow \left[ G,H%
\right] \,\right\vert \,U\text{ \textrm{is continuous and }}r\text{\textrm{-superharmonic on }}\left\{U<H\right\} \right\} , \\
\mathrm{Sub}\left( G,H\right] &=&\left\{ \left. U:I\rightarrow \left[ G,H\right] \,\right\vert \,U\text{ \textrm{is continuous and }}r\text{\textrm{-subharmonic on }}\left\{
U>G\right\} \right\} .
\end{eqnarray*}
Any function $U \in \mathrm{Sup}\left[G,H\right) \cup \mathrm{Sub}(G,H]$ is referred to as
a $r$-semiharmonic. It has been shown in \cite{Peskir2} Theorem 2.1 that when (\ref{integ assumption}) and (\ref{boundary assumption}) hold $\hat{V}=\check{V}$ and
that the value of the dual problems coincides with the value of the optimal
stopping game with upper and lower values (\ref{lower value})-(\ref{upper
value}) if and only if the optimal stopping game has a Nash equilibrium. The joint solution to the dual problems (\ref{dual problems}) is referred to in \cite{Peskir2} as the semiharmonic characterisation of the value function. 

When the driving process $X$ is a one dimensional diffusion absorbed upon exit from a compact set, it has been shown in \cite{Peskir} that the solutions to the dual problems (\ref{dual problems}) can be expressed in terms of an extension of the Legendre transform. It then follows from Theorem 2.1 in \cite{Peskir2} that the Nash and Stackelberg equilibrium of the optimal stopping game (\ref{lower value})-(\ref{upper value}) can be expressed in terms of this extension of the Legendre transform. 

The main contribution of Section \ref{section legendre transform} is to establish equality between the two extension of the Legendre transform introduced in \cite{Peskir} without reference to the probabilistic results in \cite{Peskir2} and \cite{EP}. In particular, we construct an extension of the concave biconjugate of $W^G:=(G/\psi)\circ F^{-1}$ of the form
\begin{equation*}
(W^G)_H^{\ast\ast}(y)= \inf_{z\in \mathbb{R}}\sup_{y\in \mathcal{A}^x_G(z)}(z(x-y)+W^G(y))
\end{equation*}
for $y>0$ where the set mapping $z\mapsto \mathcal{A}_G^x(z)$ is defined in such a way as to ensure that $(W^G)_H^{\ast\ast}(y)\leq (H/\psi)\circ F^{-1}(y)=: W_H(y)$ for all $y>0$. Similarly, we  construct an extension of the convex biconjugate of $W_H$ of the form
\begin{equation*}
(W_H)^G_{\ast\ast}(y)= \sup_{z\in \mathbb{R}}\inf_{y\in \mathcal{A}^H_x(z)}(z(x-y)+W_H(y))
\end{equation*}
for $y>0$ where the set mapping $z\mapsto \mathcal{A}^H_x(z)$ is defined in such a way as to ensure that $(W_H)^G_{\ast\ast}(y)\geq W^G(y)$ for all $y>0$. In particular, Theorem \ref{Theorem duality} is a converse to \cite{Peskir} Theorem 4.1 as it shows that $(W^G)_H^{\ast\ast}(y)=(W_H)^G_{\ast\ast}(y)$ for all $y>0$ using a purely analytical approach.  

Section \ref{section games} establishes that under the assumptions (\ref{integ assumption}) and (\ref{boundary assumption}) the infinite time horizon optimal stopping game (\ref{lower value})-(\ref{upper value}) has both a Stackelberg and Nash equilibrium. In Theorem \ref{Theorem Game has value} the optimal stopping game (\ref{lower value})-(\ref{upper value}) is shown to have a Stackelberg equilibrium which can be expressed in terms of the extensions of the Legendre transform introduced in Section \ref{section legendre transform}. It is then shown that the functions $(G)_H^{\ast\ast},(H)_{\ast,\ast}^{\ast\ast}: I \rightarrow [G,H]$
defined as 
\begin{equation*}
(G)_H^{\ast\ast}(x)=(W^G)_H^{\ast\ast}(F(x))\psi(x) \qquad , \qquad 
(H)^G_{\ast\ast}(x)=(W_H)^G_{\ast\ast}(F(x))\psi(x)
\end{equation*}
are $r$-semiharmonic and solve the dual problems (\ref{dual problems}). Consequently, it follows from the results in Section \ref{section legendre transform} that the solutions to the dual problems (\ref{dual problems}) coincide. Finally in Theorem \ref{Theorem Nash} it is shown that the $F$-concave/$F$-convex structure of $(W^G)_H^{\ast\ast}$ can be used to characterise a Nash equilibrium of the optimal stopping game (\ref{lower value})-(\ref{upper value}).

\section{Optimal stopping using the Legendre transformation}

\label{section stopping}

Before proceeding to solve the optimal stopping problem (\ref{stopping
problem - diffusion}), we shall first recall the definition and some
properties of the concave biconjugate. Let $f:\mathrm{dom}\left( f\right)
\rightarrow \mathbb{R}$ be a finite, measurable function on the domain
$\mathrm{dom}\left( f\right) \subseteq [-\infty,+\infty]$. The concave conjugate of
$f$, denoted $f_{\ast}$, is defined for $c\in \mathbb{R}$ as
\begin{equation*}
f_{\ast }\left( c\right) =\inf_{x\in \mathrm{dom}\left( f\right) }\left( cx
-f\left( x\right) \right) .
\end{equation*}
The concave biconjugate of $f$ is defined as
\begin{equation}
f_{\ast \ast }\left( x\right) =\inf_{y\in \mathrm{dom}\left( f^{\ast
}\right) }\left( xy -f_{\ast }\left( y\right) \right) =\inf_{y\in \mathrm{dom%
}\left( f^{\ast }\right) }\sup_{c\in \mathrm{dom}\left( f\right) }\left(
y\left( x-c\right) +f\left( c\right) \right) .  \label{concave biconjugate}
\end{equation}
The epigraph of a function $f$ is the set of all points above the graph of
$f$, that is
\begin{equation*}
\mathrm{epi}\left( f\right) :=\left\{ \left( x,\mu \right) \in \mathbb{R}
^{2}\,\left\vert \,\mu \geq f\left( x\right) \right. \right\} .
\end{equation*}
The convex hull of the set $\mathrm{epi}\left( f\right) $ is the
intersection of all convex sets containing $\mathrm{epi}\left( f\right)$
and is denoted $\mathrm{conv}\left( f\right) $. With a slight abuse of
notation, let 
\[
\mathrm{conv}\left( f\right) \left( x\right) :=\inf \left\{
\left. y\in \mathbb{R\,}\right\vert \,\left( x,y\right) \in \mathrm{conv}%
\left( f\right) \right\} ,
\]
then $-f_{\ast \ast }$ is the upper semi-continuous modification of $\mathrm{conv}\left( -f\right) \left(x\right) $, see \cite{Rock} Theorem 12.2 and Corollary 12.1.1. A constant $%
c\in \mathbb{R}$ is a subgradient of the function $f$ at $x\in \mathbb{R}$ if
\begin{equation*}
f\left( z\right) \geq f\left( x\right) +c \left( z-x\right) \qquad \forall
z\in \mathbb{R}.
\end{equation*}
The set of all such subgradients, denoted
\begin{equation}
\partial f\left( x\right) :=\left\{ c\in \mathbb{R}\,\left\vert \,f\left(
y\right) -f\left( x\right) \leq c(y-x) \quad\forall
y\in \mathbb{R}\right. \right\}  \label{superdiff}
\end{equation}
is referred to as the subdifferential. The function $f$ is referred to as
concave at $x\in \mathbb{R}$ when $\partial f\left( x\right) \neq \emptyset $. 
When $f$ is concave and differentiable at $x$ then $\partial f\left(
x\right) =\left\{ f^{\prime}\left( x\right) \right\}$.

The next lemma characterises the set upon which $f$ coincides with $f_{\ast
\ast }$. The proof illustrates how $x\mapsto f_{\ast \ast }\left( x\right) $
can be constructed using a spike variation.

\begin{lemma}
\label{lemma constructive} Any function $f:\mathrm{dom}(f)\rightarrow \mathbb{R}$ coincides with its concave biconjugate on
the set
\begin{equation}
\left\{ x\in \mathbb{R}\,\left\vert \,f_{\ast \ast }\left( x\right) =f\left(
x\right) \right. \right\} =\left\{ x\in \mathbb{R}\,\left\vert \,\partial
f\left( x\right) \neq \emptyset \right. \right\} .  \label{coincidence set}
\end{equation}
\end{lemma}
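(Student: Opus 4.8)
The statement is an equality of sets; since the left-hand side is by definition $\{x : f_{\ast\ast}(x) = f(x)\}$, what has to be shown is that this coincidence set equals $\{x : \partial f(x) \neq \emptyset\}$, and I would prove the two inclusions separately. The inclusion $\{\partial f \neq \emptyset\} \subseteq \{f_{\ast\ast} = f\}$ is the quick one: if $c \in \partial f(x_0)$, then the affine map $\ell(y) = f(x_0) + c(y - x_0)$ dominates $f$ on all of $\mathbb{R}$ and is itself concave and continuous, hence a concave upper semi-continuous majorant of $f$. Since $-f_{\ast\ast}$ is the upper semi-continuous modification of $\mathrm{conv}(-f)$, the function $f_{\ast\ast}$ is the smallest such majorant, so $f \leq f_{\ast\ast} \leq \ell$ pointwise, and evaluating at $x_0$ squeezes $f(x_0) \leq f_{\ast\ast}(x_0) \leq \ell(x_0) = f(x_0)$, giving $f_{\ast\ast}(x_0) = f(x_0)$. (The bound $f_{\ast\ast} \geq f$ can also be read straight off the biconjugate formula by taking $c = x$ in the inner supremum.)

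For the reverse inclusion I would argue directly from the biconjugate formula, which is where the spike variation enters. Fix $x_0$ with $f_{\ast\ast}(x_0) = f(x_0)$ and set $g(y) := \sup_{c \in \mathrm{dom}(f)}\left(f(c) + y(x_0 - c)\right)$, so that $f_{\ast\ast}(x_0) = \inf_y g(y)$ and, taking $c = x_0$, $g(y) \geq f(x_0)$ for every $y$. The key observation is that if this infimum is attained at some slope $y_0$, then $g(y_0) = f(x_0)$ unpacks to $f(c) + y_0(x_0 - c) \leq f(x_0)$ for all $c$, that is $f(c) \leq f(x_0) + y_0(c - x_0)$ for all $c \in \mathbb{R}$, which is exactly $y_0 \in \partial f(x_0)$. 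Thus the whole reverse inclusion reduces to the statement that the infimum defining $f_{\ast\ast}(x_0)$ is attained.

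Attainment is the step I expect to be the main obstacle, and it is where the hypothesis that $f$ takes values in $\mathbb{R} \cup \{+\infty\}$ (never $-\infty$) is essential. As a supremum of affine functions of $y$, the map $g$ is convex and lower semi-continuous, so it remains to secure coercivity. Here I would use that for any point $c_- < x_0$ of $\mathrm{dom}(f)$ the term $f(c_-) + y(x_0 - c_-)$ is finite and forces $g(y) \to +\infty$ as $y \to +\infty$, and symmetrically a domain point $c_+ > x_0$ forces $g(y) \to +\infty$ as $y \to -\infty$; since $f > -\infty$ these anchoring bounds are genuine, the sublevel sets of $g$ are then compact, and a finite minimiser $y_0$ exists. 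The delicate configuration—and the one I would single out for separate treatment—is when $x_0$ lies at an endpoint of $\mathrm{dom}(f)$, so that one of the two anchors is absent and the optimal slope can run off to $\pm\infty$ (geometrically, $f_{\ast\ast}$ develops a vertical supporting direction at $x_0$). I would handle this boundary case by transferring the problem to $f_{\ast\ast}$ itself: since $f_{\ast\ast}$ is concave, upper semi-continuous and finite at $x_0$, any supergradient of $f_{\ast\ast}$ at $x_0$ is automatically a supergradient of $f$ there (because $f \leq f_{\ast\ast}$ with equality at $x_0$), reducing matters to the standard supporting-line property of concave functions at points of their effective domain and isolating precisely the vertical-tangent endpoints as the only cases needing the $\mathbb{R} \cup \{+\infty\}$ restriction.
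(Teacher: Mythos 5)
Your proof is correct and follows essentially the same route as the paper: the easy inclusion via the affine majorant determined by a supergradient, and the reverse inclusion by expressing $f_{\ast\ast}(x_0)-f(x_0)$ as an infimum over slopes of the minimal spike height $\varepsilon(x_0;c)$ and showing that infimum is attained. Your coercivity argument (anchoring $g$ with finite values of $f$ on either side of $x_0$, which is exactly where the hypothesis $f>-\infty$ enters) in fact supplies the justification for the step the paper only asserts, namely that $\varepsilon^{\ast}(x)=0$ forces $\partial f(x)\neq\emptyset$, i.e.\ that the approximating slopes cannot escape to infinity.
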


\begin{proof}
Suppose that $\partial f\left( x\right) \neq \emptyset $ and take $c\in
\partial f\left( x\right) $ then it follows from the definition (\ref{superdiff}) that
\begin{equation*}
f_{\ast }\left( c\right) :=\inf_{y\in \mathbb{R}}\left( cy -f\left( y\right)
\right) = cx -f\left( x\right) .
\end{equation*}
Consequently, $f_{\ast \ast }$ can be written as
\begin{equation*}
f_{\ast \ast }\left( x\right) =\inf_{c\in \mathbb{R}}\left(
c\left(x-x\right) +f\left( x\right) \right) =f\left( x\right) .
\end{equation*}
so $\left\{ x\in \mathbb{R}\,\left\vert \,\partial f\left( x\right) \neq
\emptyset \right. \right\} \subseteq \left\{ x\in \mathbb{R}\,\left\vert
\,f_{\ast \ast }\left( x\right) =f\left( x\right) \right. \right\} $. To
show this inclusion holds with equality assume that $\partial f\left(
x\right) =\emptyset $ and for a fixed $c\in \mathbb{R}$ let
\begin{equation*}
\varepsilon \left( x;c\right) :=\inf \left\{ \varepsilon \geq 0\,\left\vert
\,f\left( y\right) + c\left(x-y\right) \leq f\left( x\right) +\varepsilon
\quad \forall y\in \mathbb{R}\right. \right\} .
\end{equation*}
It follows that
\begin{equation*}
-f_{\ast }\left( c\right) =\sup_{y\in \mathbb{R}}\left( f\left( y\right) -cy
\right) =f\left( x\right) +\varepsilon \left( x;c\right) -cx ,
\end{equation*}
and $f_{\ast \ast }$ can be written as
\begin{equation}
f_{\ast \ast }\left( x\right) =\inf_{c\in \mathbb{R}}\left( cx -f_{\ast
}\left( c\right) \right) =\inf_{c\in \mathbb{R}} \left( f\left( x\right)
+\varepsilon \left( x;c\right) \right) .  \label{step 11}
\end{equation}
Let
\begin{equation}
\varepsilon^{\ast} \left( x\right) :=\inf \left\{ \varepsilon \geq
0\,\left\vert \,\exists\, c\in \mathbb{R} \quad \mathrm{s.t.} \quad f\left(
y\right) +c\left(x-y\right) \leq f\left( x\right) +\varepsilon \quad \forall
y\in \mathbb{R}\right. \right\} \label{pre epsilon star}
\end{equation}
so that $\left\{ x\in \mathbb{R}\,\left\vert \,\varepsilon ^{\ast }\left(
x\right) >0\right. \right\} =\left\{ x\in \mathbb{R}\,\left\vert \,\partial
f\left( x\right) =\emptyset \right. \right\} $. Hence we may conclude from
(\ref{step 11}) that when $\partial f\left( x\right) =\emptyset $, $f_{\ast
\ast }\left( x\right) =f\left( x\right) +\varepsilon^{\ast}\left( x\right)
>f\left( x\right) $.\medskip
\end{proof}

Let $W:(0,\infty) \rightarrow
\mathbb{R}$ be defined as
\begin{equation}
W\left( y\right) :=
\left( \frac{G}{\psi }\right) \circ \left( F\right) ^{-1}\left( y\right) \label{W_function}
\end{equation}
where $F$ was defined in (\ref{F function}).
Similarly, let $\widetilde{W}:(-\infty,0) \rightarrow \mathbb{R}$ be defined via
\begin{equation}
\widetilde{W}\left( y\right) := \left( \frac{G}{\varphi }\right) \circ ( \widetilde{F})
^{-1}\left( y\right) \label{W_tilde function}
\end{equation}
The next result is the main result in this section and it shows that the value
function $V$ defined in (\ref{stopping problem - diffusion}) is such that
$V/\psi$ coincides with $W_{\ast \ast }\circ F$ where $W_{\ast \ast }$ is 
the concave biconjugate of the function $W$. The value
function $V$ is also such that $V/\varphi $ coincides with $\widetilde{W}
_{\ast \ast }\circ \widetilde{F}$ where $\widetilde{W}_{\ast \ast }$ is the
concave biconjugate of the function $\widetilde{W}$. The case that both 
boundaries are absorbing has been handled by showing that $W_{\ast \ast }\left( F\left( x\right) \right)\psi \left( x\right) $ solves (\ref{dual stopping problem}) in \cite{Peskir} Theorem 3.2.

\begin{theorem}
\label{Theorem natural boundaries}Assume that $G:I \rightarrow \mathbb{R}$ is an
upper-semicontinuous function satisfying the assumption (\ref{stopping assumptions})
and such that $W\left(0+\right) =\widetilde{W}\left(0-\right) =0$. Consider the
stopping problem (\ref{stopping problem - diffusion}), then
\begin{equation*}
V\left( x\right) =W_{\ast \ast }\left( F\left( x\right) \right) \psi \left(
x\right) =\widetilde{W}_{\ast \ast}( \widetilde{F}(x)) \varphi \left( x\right) 
\end{equation*}
for all $x\in I$. The stopping time which attains the supremum in (\ref{stopping problem -
diffusion}) is $\tau ^{\ast }=T_{a^{\ast }}\wedge T_{b^{\ast }}$ where
\begin{eqnarray}
a^{\ast } &=&\sup \left\{ \left. c\in (a,x]\,\right\vert W\left( F\left(
c\right) \right) =W_{\ast \ast }\left( F\left( c\right) \right) \right\} ,
\label{location continuation region} \\
b^{\ast } &=&\inf \left\{ \left. c\in [x,b)\,\right\vert W\left( F\left(
c\right) \right) =W_{\ast \ast }\left( F\left( c\right) \right) \right\} ,
\notag
\end{eqnarray}
and we use the convention that $\sup \emptyset = a$ and $\inf \emptyset = b$.
Furthermore $V$ solves (\ref{dual stopping problem}).
\end{theorem}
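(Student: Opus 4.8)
The plan is to set $U(x):=W_{\ast\ast}(F(x))\psi(x)$ and to prove in three stages that $U$ coincides with the stopping value $V$ and solves the dual problem (\ref{dual stopping problem}). First I would argue by pure convex analysis: by \cite{Rock} Theorem 12.2 the biconjugate $W_{\ast\ast}$ is the upper semi-continuous concave majorant of $W$, i.e. the smallest concave function lying above $W$ on $[0,\infty)$. Since $x\mapsto U(x)/\psi(x)=W_{\ast\ast}(F(x))$ is concave as a function of $y=F(x)$, it is $F$-concave, so by the characterisation \cite{Dyn} Theorem 16.4 recalled above $U$ is $r$-superharmonic; continuity of $W_{\ast\ast}$ on $(0,\infty)$ and of $F,\psi$ makes $U$ continuous on $I$, and $W_{\ast\ast}\geq W$ together with $\psi>0$ gives $U\geq G$, so $U\in\mathrm{Sup}(G]$. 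Conversely, any $\widehat{U}\in\mathrm{Sup}(G]$ has $\widehat{U}/\psi$ being $F$-concave and $\geq G/\psi$, so $(\widehat{U}/\psi)\circ F^{-1}$ is a concave majorant of $W$ and hence $\geq W_{\ast\ast}$, which yields $\widehat{U}\geq U$ pointwise. Thus $U=\inf_{F\in\mathrm{Sup}(G]}F$ solves (\ref{dual stopping problem}), and the symmetric argument using $\widetilde{F}$-concavity and (\ref{W_tilde function}) gives the second representation $U=\widetilde{W}_{\ast\ast}(\widetilde{F})\varphi$.

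The remaining two stages establish $V=U$ probabilistically. For the upper bound $V\leq U$ I would fix any $F\in\mathrm{Sup}(G]$ and any stopping time $\tau$. Optional sampling applied to the $r$-superharmonic $F$ gives $F(x)\geq E_x[F(X_{\tau\wedge t})e^{-r(\tau\wedge t)}]$, and since $F\geq G$ pointwise we have $F(X_{\tau\wedge t})e^{-r(\tau\wedge t)}\geq G(X_{\tau\wedge t})e^{-r(\tau\wedge t)}$ along the paths. Passing to the limit $t\to\infty$ under the uniform integrability furnished by the second condition in (\ref{stopping assumptions}) gives $F(x)\geq E_x[G(X_\tau)e^{-r\tau}]$, where on $\{\tau=\infty\}$ the integrand is read as the a.s. limit from the first condition in (\ref{stopping assumptions}); the normalisation $W(0)=\widetilde{W}(0)=0$ guarantees that $W$, and hence $W_{\ast\ast}$ and $U$, are finite. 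Taking the supremum over $\tau$ and the infimum over $F$ then yields $V\leq\inf_{F\in\mathrm{Sup}(G]}F=U$.

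For the lower bound $V\geq U$ I would exploit that on the continuation region the biconjugate is affine. Let $C=\{x:W_{\ast\ast}(F(x))>W(F(x))\}=\{U>G\}$; by the description of the coincidence set (\ref{coincidence set}) in Lemma \ref{lemma constructive}, $C$ is open and on each of its components $W_{\ast\ast}(y)=\alpha y+\beta$ is genuinely affine in $y=F(x)$, so $U(x)=\alpha\varphi(x)+\beta\psi(x)$ solves (\ref{ODE}) and is therefore $r$-harmonic there. Consequently $(e^{-r(t\wedge\tau^{\ast})}U(X_{t\wedge\tau^{\ast}}))_{t\geq0}$ is a martingale, where $\tau^{\ast}=T_{a^{\ast}}\wedge T_{b^{\ast}}$ is the first exit time from $C$ with $a^{\ast},b^{\ast}$ as in (\ref{location continuation region}). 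Optional sampling and passage to the limit give $U(x)=E_x[U(X_{\tau^{\ast}})e^{-r\tau^{\ast}}]$; because $W_{\ast\ast}=W$, equivalently $U=G$, on $\partial C$ by the very definition of $a^{\ast},b^{\ast}$, the right-hand side equals $E_x[G(X_{\tau^{\ast}})e^{-r\tau^{\ast}}]\leq V(x)$. Combining the two bounds gives $V=U$, both representations, and optimality of $\tau^{\ast}$.

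I expect the main obstacle to be the boundary-at-infinity analysis in both the martingale-convergence step and the $\{\tau=\infty\}$ term. Since the endpoints $a,b$ are natural, $\tau^{\ast}$ can be infinite with positive probability precisely when a component of $C$ abuts an endpoint, and then the identity $U(x)=E_x[G(X_{\tau^{\ast}})e^{-r\tau^{\ast}}]$ requires that $e^{-rt}U(X_t)$ converges to the correct value along the paths together with uniform integrability. This is exactly where the standing assumptions (\ref{stopping assumptions}) and the conditions $W(0)=\widetilde{W}(0)=0$ are consumed: the integrability and limit conditions control the growth of $G^{+}/\psi$ near $a$ and of $G^{+}/\varphi$ near $b$, ensuring the harmonic piece $\alpha\varphi+\beta\psi$ decays correctly and legitimising the limit exchange, while the boundary normalisation pins the value at the natural boundaries so that no mass is lost at infinity. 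Checking that $W_{\ast\ast}$ is affine rather than merely concave on each component of $C$, and that the two representations in $F$ and $\widetilde{F}$ coordinates agree, are the remaining points of care.
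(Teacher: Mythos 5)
Your argument is correct in outline but takes a genuinely different route from the paper. You prove the superharmonic characterisation from scratch and then verify $V=U$ by a two-sided sandwich: (i) $U=W_{\ast\ast}(F)\psi$ is the smallest continuous $r$-superharmonic majorant of $G$, by pure convex analysis plus the equivalence between $F$-concavity of $U/\psi$ and $r$-superharmonicity; (ii) $V\le U$ by optional sampling applied to an arbitrary element of $\mathrm{Sup}(G]$; (iii) $V\ge U$ by noting that $W_{\ast\ast}$ is affine on each component of $\{W_{\ast\ast}>W\}$, so that $U=\alpha\varphi+\beta\psi$ is $r$-harmonic there and the exit time $\tau^{\ast}$ attains the value. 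The paper instead runs a single direct computation: it approximates $I$ by intervals $(a,b_n]$, uses that $e^{-r(t\wedge T_{b_n})}\varphi(X_{t\wedge T_{b_n}})$ is a true martingale to write $V^{n}(x)=\inf_{c}\bigl(c\varphi(x)-\inf_{\tau}E_x[(c\varphi-G)(X^{n}_{\tau})e^{-r\tau}]\bigr)$, identifies the inner infimum with $W_{\ast}(c)\psi(x)$ as $n\to\infty$, and so obtains $V=W_{\ast\ast}(F)\psi$ constructively as a Legendre biconjugate, with the dual-problem statement falling out at the end. Your route is the classical verification argument (closer to Dynkin or Dayanik--Karatzas) and is shorter where it works; the paper's route is designed so that the biconjugate emerges from the optimisation itself rather than being guessed and verified, which is the point of the article.

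The one place where your sketch is materially thinner than the paper is the step you yourself flag. When a component of the continuation region abuts a natural boundary (say $b^{\ast}=b$), the process $e^{-rt}\varphi(X_t)$ is only a nonnegative local martingale (the paper asserts the martingale property only for the process stopped at a level $T_y$ with $y>x$), so the identity $U(x)=E_x[U(X_{\tau^{\ast}})e^{-r\tau^{\ast}}]$ and, in the other direction, the passage from $\hat F(x)\ge E_x[\hat F(X_{\tau\wedge t})e^{-r(\tau\wedge t)}]$ to a bound on $E_x[G(X_\tau)e^{-r\tau}]$ with the correct contribution on $\{\tau=\infty\}$ both require more than routine optional sampling. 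This is exactly what the approximating domains $(a,b_n]\uparrow(a,b)$, the uniform integrability in (\ref{stopping assumptions}) and the normalisation $W(0)=\widetilde{W}(0)=0$ are engineered to deliver in the paper's proof; to complete your argument you would need to import that localisation (or an equivalent uniform-integrability control of $e^{-rt}\varphi(X_t)$ on the event that $X$ drifts to the boundary) rather than leaving it as an expectation of where the hypotheses are consumed. The remaining structural claims (affineness of $W_{\ast\ast}$ on components of $\{W_{\ast\ast}>W\}$, and that every element of $\mathrm{Sup}(G]$ dominates $U$) are sound, though the latter also silently uses the value $W(0)=0$ at the boundary of the transformed domain and deserves a sentence.
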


\begin{proof}
It is assumed that $W(0+)=\widetilde{W}(0-)=0$ without loss of generality as when this is not true it can be achieved by subtracting the constant $c$ introduced in (\ref{stopping assumptions}) from the value function $V$. Let $\left( b_{n}\right) _{n\geq 1}$ be a sequence of real
numbers such $(a, b_{n}] \subset \left( a,b\right) $, $b_{n}\leq
b_{n+1}$ for all $n\geq 1$ and $(a,b_n] \rightarrow \left(
a,b\right) $ as $n\rightarrow +\infty $. Fix $x\in \left( a,b\right) $
then for some sufficiently large $N$, $x\in (a,b_n] $ for all $%
n\geq N$.
Let $X_{t}^{n}:=X_{t\wedge T_{b_{n}}}$ and consider the following family of
optimal stopping problems
\begin{equation}
V^{n}\left( x\right) =\sup_{\tau }E_{x}\left[ e^{-r\tau} G\left( X_{\tau }^{n}\right)
\right]  \label{stopping problem limit}
\end{equation}
where $G$ satisfies (\ref{stopping assumptions}). Consider the sets of functions 
\begin{equation*}
\mathrm{Sup}^n(G]=\{U:(a,b_n] \rightarrow [G,+\infty)\,\vert\, U\,\mathrm{is}\,\mathrm{continuous}\,\mathrm{and}\,r\mathrm{-superharmonic}\}
\end{equation*}
for each $n\geq 1$. Take any $U_1,U_2 \in \mathrm{Sup}^n(G]$, then $U_1/\psi$ and $U_2/\psi$ are $F$-concave on $(a,b_n]$ (see [8] Theorem 16.4). Since $U_1/\psi$ and $U_2/\psi$ are $F$-concave, the functions $W_1= (U_1/\psi) \circ F^{-1}$ and $W_2= (U_2/\psi) \circ F^{-1}$ are concave. Consequently, $W_3 = W_1 \wedge W_2$ is a concave function and reversing this argument implies $U_3 = U_1 \wedge U_2 \in \mathrm{Sup}^n(G]$. Hence the set $\mathrm{Sup}^n(G]$ is downwards directed so $\widetilde{V}^n:=\inf_{U\in\mathrm{Sup}^n(G]}U$ exists. In fact this infimum is attained since, when $X$ is a one dimensional diffusion, all $r$-superharmonic functions are continuous (see \cite{PS} Sections 9.3.3 and 9.3.4 replacing $S$ with $F$). This shows that the conditions of \cite{PS} Theorem 2.17 hold so if an optimal stopping time exists it is of the form $\tau_n^{\ast}=\inf\{t\geq 0\,\vert\, X_t \in \mathcal{D}^n\}$ where $\mathcal{D}^n:=\{x\in I\,\vert\,G(x)=\widetilde{V}^n(x)\}$. The set $\mathcal{D}$ is a closed set and $\mathcal{C}^n:=\{x\in I\,\vert\,G(x)<\widetilde{V}^n(x)\}$ is open since $G$ is upper-semicontinuous and $\widetilde{V}^n$ is continuous on $(a,b_n]$. Hence, for each $n\geq 1$ there is a collection of disjoint open intervals $((x_i,y_i))_{i\geq 1}$ such that $\mathcal{C}^n = \bigcup_{i\geq 1}(x_i,y_i)$. Suppose that $X_0 \in (x_i,y_i)$ for some $i\geq 1$, as the paths $t \mapsto X_t$ are continuous, $T_{x_i} \wedge T_{y_i}\leq T_{x_j} \wedge T_{y_j}$ for all $j\neq i$. Consequently we can restrict attention to candidate stopping times which are exit times from open intervals. 

If an optimal stopping time for (\ref{stopping problem limit}) exists we may write the optimal stopping time for (\ref{stopping problem limit}) as $\tau _{n}^{\ast }=T_{a^{\ast}_{n}}\wedge T_{b^{\ast }_{n}}$ for some $a\leq a^{\ast}_{n}\leq x\leq
b^{\ast}_{n}\leq b_n$. Furthermore, each $\tau _{n}^{\ast }\leq T_{b_n}$, $\tau
_{n}^{\ast }\leq \tau _{n+1}^{\ast }$ for all $n\geq 1$ and
$\lim_{n\rightarrow +\infty }\tau _{n}^{\ast }=\tau ^{\ast }$ $P_{x}$-a.s.
for all $x\in I$. For all $n\geq 1$, $X_{\tau _{n}^{\ast
}}^{n}=X_{\tau _{n}^{\ast }}$ and the process $X$ is left-continuous over
stopping times so $\lim_{n\rightarrow +\infty }X_{\tau _{n}^{\ast
}}^{n}=X_{\tau ^{\ast }}$ $P_{x}$-a.s for all $x\in I$. The assumption
(\ref{stopping assumptions}) implies that $\left(e^{-rt}G\left(X_{t}\right)\right) _{t\geq 0}$ is uniformly integrable so it
follows that
\begin{equation*}
\lim_{n\rightarrow \infty }V^{n}\left( x\right) =\lim_{n\rightarrow \infty
}E_{x}[ e^{-r\tau _{n}^{\ast }} G( X_{\tau _{n}^{\ast }}^{n}) ]
=V\left( x\right) .
\end{equation*}
The process $( e^{-r\left( t\wedge T_{b_{n}}\right) }\varphi(
X_{t}^{n}))_{t\geq 0}$ is a $( P_{x},\mathcal{F}_{t\wedge
T_{b_{n}}})$-martingale for each $b_{n}>x$ so applying the optional
sampling theorem yields $E_{x}\left[ e^{-r\tau} \varphi \left( X_{\tau}^{n}\right)
\right] =\varphi \left( x\right) $ for all $x\in (a,b_n]$ and
all $\tau \leq T_{b_{n}}$. Hence for an arbitrary $c\in \mathbb{R}$
\begin{equation*}
V^{n}\left( x\right) =\sup_{\tau }E_{x}\left[ e^{-r\tau } G\left( X_{\tau }^{n}\right)
\right] =c\varphi \left( x\right) +\sup_{\tau }E_{x}\left[ e^{-r\tau }\left(
G\left( X_{\tau }^{n}\right) -c\varphi \left( X_{\tau }^{n}\right) \right)
\right] .
\end{equation*}
Thus,
\begin{equation}
V^{n}\left( x\right) =\inf_{c\in \mathbb{R}}\left( c\varphi \left( x\right)
-\inf_{\tau }E_{x}\left[ e^{-r\tau }\left( c\varphi \left( X_{\tau }^{n}\right)
-G\left( X_{\tau }^{n}\right) \right) \right] \right) .
\label{Vn}
\end{equation}
The next step is to expand the right hand side of this expression in such a
way that it converges to the concave biconjugate of $W$ as $n\rightarrow
+\infty $. Using (\ref{laplace}), the inner infimum in (\ref{Vn}) can be written as
\begin{eqnarray*}
&&\inf_{a< y\leq x\leq z\leq b_n}E_{x}\left[ e^{-r\left( T_{y}\wedge T_{z}\right) }
( c\varphi ( X_{T_{y}\wedge T_{z}}^{n}) -G( X_{T_{y}\wedge T_{z}}^{n}))
\right]  \notag \\
&=&\inf_{a<y\leq x\leq z\leq b_n}\left( \frac{c\varphi \left(
y\right) -G\left( y\right) }{\psi \left( y\right) }\frac{F\left( z\right)
-F\left( x\right) }{F\left( z\right) -F\left( y\right) }+\frac{c\varphi
\left( z\right) -G\left( z\right) }{\psi \left( z\right) }\frac{F\left(
x\right) -F\left( y\right) }{F\left( z\right) -F\left( y\right) }\right)
\psi \left( x\right) .  
\end{eqnarray*}
We claim that the right hand side of this expression converges to
\begin{equation*}
\inf_{y\in \left( a,b\right) }\left( \frac{c\varphi \left( y\right)
-G\left( y\right) }{\psi \left( y\right) }\right) \psi \left( x\right)
=\inf_{y^{\prime }> 0}( cy^{\prime }-W\left( y^{\prime }\right))
\psi \left( x\right) =:W_{\ast }\left( c\right) \psi \left( x\right)
\end{equation*}
as $n\rightarrow \infty $. To this end, take
\begin{eqnarray*}
z^{+} &=&\sup \left\{ z\in I\,\,\left\vert \,W_{\ast }\left( c\right) \psi
\left( z\right) =c\varphi \left( z\right) -G\left( z\right) \right. \right\}
\\
y^{-} &=&\inf \left\{ y\in I\,\,\left\vert \,W_{\ast }\left( c\right) \psi
\left( y\right) =c\varphi \left( y\right) -G\left( y\right) \right. \right\}
\end{eqnarray*}
with the convention that $\sup \emptyset =a$ and $\inf \emptyset =b$. In
general for all $a\leq y\leq x\leq z\leq b_n$, $W_{\ast}(c)\leq \lambda(cy - W(y))+(1-\lambda)(cz-W(z))$ for all $\lambda \in [0,1]$. In particular, taking $\lambda = (F(z)-F(x))/(F(z)-F(y))$ we obtain
\begin{equation}
W_{\ast }\left( c\right) \leq \frac{c\varphi \left( y\right) -G\left(
y\right) }{\psi \left( y\right) }\frac{F\left( z\right) -F\left( x\right) }{%
F\left( z\right) -F\left( y\right) }+\frac{c\varphi \left( z\right) -G\left(
z\right) }{\psi \left( z\right) }\frac{F\left( x\right) -F\left( y\right) }{%
F\left( z\right) -F\left( y\right) }.  \label{step4}
\end{equation}
When $y^{-}\leq x\leq z^{+}$ (\ref{step4}) holds with equality for $y=y^{-}$
and $z=z^{+}$. When $y^{-}\geq x$, let $(y_m)_{m\geq 1}$ be such that $y_m\leq y^-$ and $\lim_{m\rightarrow \infty}y_m = a$. For $y=y_m$ and $z=y^-$ the inequality in (\ref{step4}) 
is strict, however, since we assumed $W(0+)=0$ it follows from the definition of $F$ that
the as $m\rightarrow \infty$ the right hand side of (\ref{step4}) converges to $W_{\ast}(c)$. 
When $z^{+}\leq x$ the inequality is strict as
\begin{equation}
\frac{c\varphi \left( y\right) -G\left( y\right) }{\psi \left( y\right) }
\frac{F\left( z^{+}\right) -F\left( x\right) }{F\left( z^{+}\right) -F\left(
y\right) }+\frac{c\varphi \left( z^{+}\right) -G\left( z^{+}\right) }{\psi
\left( z^{+}\right) }\frac{F\left( x\right) -F\left( y\right) }{F\left(
z^{+}\right) -F\left( y\right) }>\frac{c\varphi \left( z^{+}\right) -G\left(
z^{+}\right) }{\psi \left( z^{+}\right) } \label{here}
\end{equation}
for all $y\in \left[x,b_n \right] $. We may switch to the other ratio of the
fundamental solutions using (\ref{laplace}) on the left hand side of (\ref{here}) and use that $\varphi \left( y\right)\rightarrow +\infty $ as $y\uparrow b$ to conclude
\begin{equation*}
\lim_{n\rightarrow \infty }\inf_{y\in \left[x,b_{n}\right] }\left( \frac{
c\varphi \left( y\right) -G\left( y\right) }{\varphi \left( y\right) }\frac{
\widetilde{F}\left( z^{+}\right) -\widetilde{F}\left( x\right) }{\widetilde{F}\left( z^{+}\right)
-\widetilde{F}\left(y\right) }+\frac{c\varphi \left( z^{+}\right) -G\left( z^{+}\right) }{\varphi
\left( z^{+}\right) }\frac{\widetilde{F}\left( x\right) -\widetilde{F}\left( y\right) }
{\widetilde{F}\left(z^{+}\right) -\widetilde{F}\left( y\right) }\right) =W_{\ast }\left( c\right) .
\end{equation*}
Hence letting $n\rightarrow \infty$ on both sides of (\ref{Vn}) yields
\begin{equation*}
V\left( x\right) =\inf_{c\in \mathbb{R}}\left( c\frac{\varphi \left(
x\right) }{\psi \left( x\right) }+W_{\ast }\left( c\right) \right) \psi
\left( x\right) .
\end{equation*}
The argument using the other ratio of fundamental solutions follows
analogously. The statement about the optimal stopping time follows from the
form of the value function provided as the stopping region $\mathcal{D}$ and 
continuation region $\mathcal{C}$ for (\ref{stopping problem - diffusion}) are
\begin{eqnarray*}
\mathcal{C} &=&\left\{ \left. x\in I\,\right\vert \,V\left( x\right)
>G\left( x\right) \right\} =\left\{ x\in I\,\left\vert \,W_{\ast \ast
}\left( F\left( x\right) \right) \psi \left( x\right) >G\left( x\right)
\right. \right\} , \\
\mathcal{D} &=&\left\{ \left. x\in I\,\right\vert \,V\left( x\right)
=G\left( x\right) \right\} =\left\{ x\in I\,\left\vert \,W_{\ast \ast
}\left( F\left( x\right) \right) \psi \left( x\right) =G\left( x\right)
\right. \right\} .
\end{eqnarray*}
The stopping time $\tau^{\ast}:=\{t\geq 0\,\vert\,X_t \in \mathcal{D}\}=T_{a^{\ast}}\wedge T_{b^{\ast}}$ attains the supremum in (\ref{stopping problem - diffusion}) although it may occur that $P_x(\tau^{\ast}<+\infty)<1$. Finally, $W_{\ast \ast }=-\mathrm{cl}\left( \mathrm{conv}\left( -W\right)\right) $ is the smallest concave function dominating $W$, so $V\left(x\right) =W_{\ast \ast }\left( F\left( x\right) \right) \psi \left( x\right)
$ is the smallest $r$-superharmonic function dominating the gains function
$G\left( x\right)$ and hence $V\left( x\right) $ solves (\ref{dual stopping
problem}).
\end{proof}

\medskip

In Theorem \ref{Theorem natural boundaries} it has been shown that when $W(0+) =\widetilde{W}(0-)=0$ the value function of (\ref{stopping problem - diffusion}) coincides with the solution to (\ref{dual stopping problem}). Whether the optimal stopping time is obtained in finite time depends both on the function $G$ and the behaviour of the diffusion $X_t$ as $t\rightarrow \infty$. In Theorem \ref{Theorem natural boundaries} we can relax the assumptions (\ref{stopping assumptions}) and instead impose that $\vert W(0+)\vert < \infty$ and $\vert \widetilde{W}(0-)\vert < \infty$ which is the assumption used in \cite{DK}. The case that $W(0+)=+\infty$ is degenerate as $V(x)\geq \lim_{c\downarrow a}E_x[G(X_{T_c})e^{-rT_c}\mathbb{I}_{T_c <\infty} ] =\lim_{c\downarrow 0}W(c)\psi(x)=W(0+)\psi(x)=\infty$. 

\begin{remark}
\label{convention remark}
The assumption in Theorem \ref{Theorem natural boundaries} that $W(0+)=\widetilde{W}(0-)=0$ ensures that $V\left(a+\right)=V\left(b-\right)=0$. In \cite{DK} and \cite{EV} this condition is imposed by extending all Borel measurable functions $F$ on $I$ onto $[a,b]$ by setting $F\left(X_{\tau}\left(\omega\right)\right)=0$ on $\left\{\tau = +\infty \right\}$. If we use this convention and the weaker assumption that $\vert W(0+)\vert < \infty$ and $\vert \widetilde{W}(0-)\vert < \infty$ rather than (\ref{stopping assumptions}) the optimal stopping time may not exist. To see this, suppose that a sequence $(a_n)_{n\geq 1}$ such that $a_n \downarrow a$ as $n\rightarrow \infty$ is optimising in the sense that $E_x[G(X_{T_{a_n}})e^{-rT_{a_n}}]\leq E_x[G(X_{T_{a_{n+1}}})e^{-rT_{a_{n+1}}}]$ and $V(x)=\lim_{n\rightarrow \infty}E_x[G(X_{T_{a_n}})e^{-rT_{a_n}}]=G(a+)\psi(x)/\psi(a+)=W(0+)$. When $W(0+)>0$, this limit is not obtained as $\tau^{\ast}=\lim_{n\rightarrow \infty}T_{a_n}=\infty$ $P_x$-a.s. for all $x\in I$ so $E_x[G(X_{\tau^{\ast}})e^{-r\tau^{\ast}}]=0$.
\end{remark}

For fixed $x\in I$ let
\begin{eqnarray*}
l^{x}\left( c,p\right) &=&\sup \left\{ y\leq x\left\vert \,\left( \frac{G}{%
\psi }\right) \left( y\right) -\frac{p}{\psi \left( x\right) }=c\left(
F\left( y\right) -F\left( x\right) \right) \right. \right\} , \\
r^{x}\left( c,p\right) &=&\inf \left\{ y\geq x\,\left\vert \,\left( \frac{G}{%
\psi }\right) \left( y\right) -\frac{p}{\psi \left( x\right) }=c\left(
F\left( y\right) -F\left( x\right) \right) \right. \right\} ,
\end{eqnarray*}
with the usual convention that $\sup \emptyset =-\infty $ and $\inf
\emptyset =+\infty $. For a given $x\in I$ consider the sets
\begin{eqnarray}
A_{1}\left( x\right) &=&\left\{ p\in \left[ G\left( x\right) ,+\infty
\right) \,\left\vert \,\exists\,c\in \mathbb{R}\text{ }\mathrm{s.t.}\text{ }%
\left( l^{x}\left( c,p\right) ,r^{x}\left( c,p\right) \right) \subset
I\right. \right\} ,  \label{set A1} \\
A_{2}\left( x\right) &=&\left\{ p\in \left[ G\left( x\right) ,+\infty
\right) \,\left\vert \,\exists\,c\in \mathbb{R}\text{ }\mathrm{s.t.}\text{ }%
r^{x}\left( c,p\right) = -l^{x}\left( c,p\right) =\mathbb{+\infty }\right.
\right\} .  \notag
\end{eqnarray}
In Lemma \ref{lemma constructive} it was shown that $x\mapsto W_{\ast \ast
}\left( x\right) $ can be constructed by minimising over the `$F$-tangents'
of the form $y\mapsto p/\psi \left( x\right) +c\left( F\left( y\right)
-F\left( x\right) \right) $ which strictly dominate $W$ on $\mathrm{dom}%
\left( W\right) $, i.e. $W_{\ast \ast }\left( x\right) =\inf A_{2}\left(
x\right) $. The next corollary provides a short proof of the `dual
interpretation' provided in \cite{Peskir} which claims that $x\mapsto
W_{\ast \ast }\left( x\right) $ can be also constructed by maximising over
the $F$-tangents which intercept $W$ on both sides of the point $x$ in
$\mathrm{dom}\left( W\right) $, i.e. $W_{\ast \ast }\left( x\right)
=\sup A_{1}\left( x\right) $. The latter
formulation facilitates the construction of a maximising sequence of
stopping times.

\begin{corollary}
\label{corollary construction}Let
\begin{equation*}
\widehat{\varepsilon}\left( x;c\right) :=\sup \left\{ \varepsilon \geq
0\,\left\vert \, a \leq l^{x}\left(c,G\left(x\right)+\varepsilon\right) \leq
x\leq r^{x}\left(c,G\left(x\right)+\varepsilon\right) \leq b \right.\right\}
\end{equation*}
and set $A_{c}\left( x\right) :=\left[ F\left( l^{x}\left( c,G\left(
x\right) +\widehat{\varepsilon}\left( x;c\right) \right) \right) ,F\left(
r^{x}\left( c,G\left( x\right) +\widehat{\varepsilon}\left( x;c\right)
\right) \right) \right]$, then the value function of the optimal stopping
problem (\ref{stopping problem - diffusion}) can be represented as
\begin{equation*}
V\left( x\right) =\sup_{c\in \mathbb{R}}\sup_{y\in A_{c}\left( x\right)
}\left( c\left( F\left( x\right) -y\right) +W\left( y\right) \right) \psi
\left( x\right) .
\end{equation*}
Moreover, $V\left( x\right) =\sup A_{1}\left( x\right) $ for each $x\in I$
where $A_{1}\left( x\right) $ is the set defined in (\ref{set A1}).
\end{corollary}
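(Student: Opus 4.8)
The plan is to deduce both assertions from the identity $V(x)=W_{\ast\ast}(F(x))\psi(x)$ established in Theorem \ref{Theorem natural boundaries}, together with the fact (used in its proof) that $W_{\ast\ast}$ is the smallest concave majorant of $W$. This gives $A_1(x)$ and $A_c(x)$ a purely geometric reading: $p\in A_1(x)$ precisely when some affine function $y\mapsto p/\psi(x)+c(F(y)-F(x))$ passes through $(F(x),p/\psi(x))$ and meets the graph of $W$ at two abscissae $F(l^x)\le F(x)\le F(r^x)$ lying in the interior, i.e. an admissible chord of $W$ straddling $F(x)$.

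First I would prove $\sup A_1(x)\le V(x)$. Fix $p\in A_1(x)$ with witnessing slope $c$ and put $y_-=l^x(c,p)$, $y_+=r^x(c,p)$, both in $I$; by definition the line $L$ through $(F(x),p/\psi(x))$ of slope $c$ passes through the two graph points $(F(y_\pm),W(F(y_\pm)))$, so on $[F(y_-),F(y_+)]$ it is exactly the chord joining them. Since $W\le W_{\ast\ast}$ and $W_{\ast\ast}$ is concave, this chord lies below the chord of $W_{\ast\ast}$ through the same abscissae, which lies below $W_{\ast\ast}$. Evaluating at $F(x)$ gives $p/\psi(x)=L(F(x))\le W_{\ast\ast}(F(x))$, hence $p\le V(x)$. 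For the reverse inequality I would exhibit an optimal chord. In the stopping region $W(F(x))=W_{\ast\ast}(F(x))$, and the degenerate choice $l^x=r^x=x$, $p=G(x)=V(x)$ already lies in $A_1(x)$. In the continuation region $F(x)$ belongs to a maximal interval $(\alpha,\beta)$ on which $W_{\ast\ast}$ is affine and equals the chord joining two contact points $\alpha=F(y_-)$, $\beta=F(y_+)$ where $W_{\ast\ast}=W$; taking $c$ the slope of that chord and $p=W_{\ast\ast}(F(x))\psi(x)=V(x)$ realises $p\in A_1(x)$. Combining the two bounds yields $V(x)=\sup A_1(x)$.

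Finally I would identify the double supremum with $\sup A_1(x)$. Fix $c$ and write $L_c$ for the slope-$c$ line through $(F(x),(G(x)+\widehat{\varepsilon}(x;c))/\psi(x))$. By definition $\widehat{\varepsilon}(x;c)$ is the largest height for which the two contact abscissae stay in $I$, and maximality forces $W\le L_c$ on $A_c(x)$: were $W>L_c$ at an interior point the line could be raised with its contacts remaining interior, contradicting the supremum. Since $c(F(x)-y)+W(y)=\big(W(y)-L_c(y)\big)+(G(x)+\widehat{\varepsilon}(x;c))/\psi(x)$ and $W\le L_c$ with equality at the endpoints, the inner supremum over $y\in A_c(x)$ equals $(G(x)+\widehat{\varepsilon}(x;c))/\psi(x)$. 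Multiplying by $\psi(x)$ gives $G(x)+\widehat{\varepsilon}(x;c)\in A_1(x)$, and conversely every $p\in A_1(x)$ with slope $c$ satisfies $p\le G(x)+\widehat{\varepsilon}(x;c)$; hence the family $\{G(x)+\widehat{\varepsilon}(x;c):c\in\mathbb{R}\}$ is cofinal in $A_1(x)$ and the outer supremum equals $\sup A_1(x)=V(x)$, which is the stated formula.

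The step I expect to be most delicate is the boundary behaviour concealed in the second paragraph. When the affine piece of $W_{\ast\ast}$ extends all the way to a natural boundary, the contact abscissa $y_-$ or $y_+$ escapes to $a$ or $b$ and no genuine interior chord attains $V(x)$, so the supremum is only approached in a limit. Here the normalisation $W(0)=\widetilde{W}(0)=0$ and the symmetric description through $\widetilde{F}$ and $\varphi$ at the opposite endpoint must be invoked to control the limiting chords; this is precisely the configuration that the distinction between $A_1(x)$ and $A_2(x)$ is designed to separate, and ensuring that $\sup A_1(x)$ still equals $V(x)$ in this limiting regime is where the main work lies.
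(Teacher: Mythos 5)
Your proposal is correct and follows the same overall strategy as the paper: both rest on the identity $V(x)=W_{\ast\ast}(F(x))\psi(x)$ from Theorem \ref{Theorem natural boundaries}, both compute the inner supremum over $y\in A_{c}(x)$ by observing that the line at height $G(x)+\widehat{\varepsilon}(x;c)$ dominates $W$ on $A_c(x)$ with equality at the contact abscissae, and both thereby reduce everything to showing $\sup_{c}\bigl(G(x)+\widehat{\varepsilon}(x;c)\bigr)=W_{\ast\ast}(F(x))\psi(x)$. Where you diverge is in how that last equality is justified. The paper introduces $\varepsilon(x;c)$ (the smallest upward spike making the slope-$c$ line a global majorant), notes $\varepsilon(x;c)\geq\widehat{\varepsilon}(x;c)$, and asserts $\inf_{c}\varepsilon(x;c)=\sup_{c}\widehat{\varepsilon}(x;c)$ ``to avoid contradicting their definitions,'' then invokes Lemma \ref{lemma constructive}. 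You instead prove the two inequalities $\sup A_{1}(x)\leq V(x)$ (chord of $W$ lies below the chord of the concave majorant, hence below $W_{\ast\ast}$) and $\sup A_{1}(x)\geq V(x)$ (degenerate chord in the stopping region; the affine piece of $W_{\ast\ast}$ between contact points in the continuation region). This is a genuinely more explicit unpacking of the step the paper compresses into one sentence, and it makes the geometric content clearer. The price is the case you flag yourself: when the affine component of $W_{\ast\ast}$ reaches a natural boundary the optimal chord is only approached by interior chords, and you leave the limiting argument (via the normalisation $W(0)=\widetilde{W}(0)=0$ and the approximating domains of Theorem \ref{Theorem natural boundaries}) as a sketch --- but the paper's own proof is no more explicit on this point, so this is a shared, not a new, imprecision. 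One minor quibble: your justification that $W\leq L_{c}$ on $A_{c}(x)$ via ``raising the line'' is slightly off (raising the line can destroy the contact on the opposite side of $x$); the cleaner reason is simply that an interior point with $W>L_{c}$ would, by continuity of $G$, produce a crossing strictly between $l^{x}$ and $x$ (or between $x$ and $r^{x}$), contradicting that $l^{x}$ and $r^{x}$ are the last and first crossings.
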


\begin{figure}[h!]
\begin{center}
\includegraphics[width=9cm]{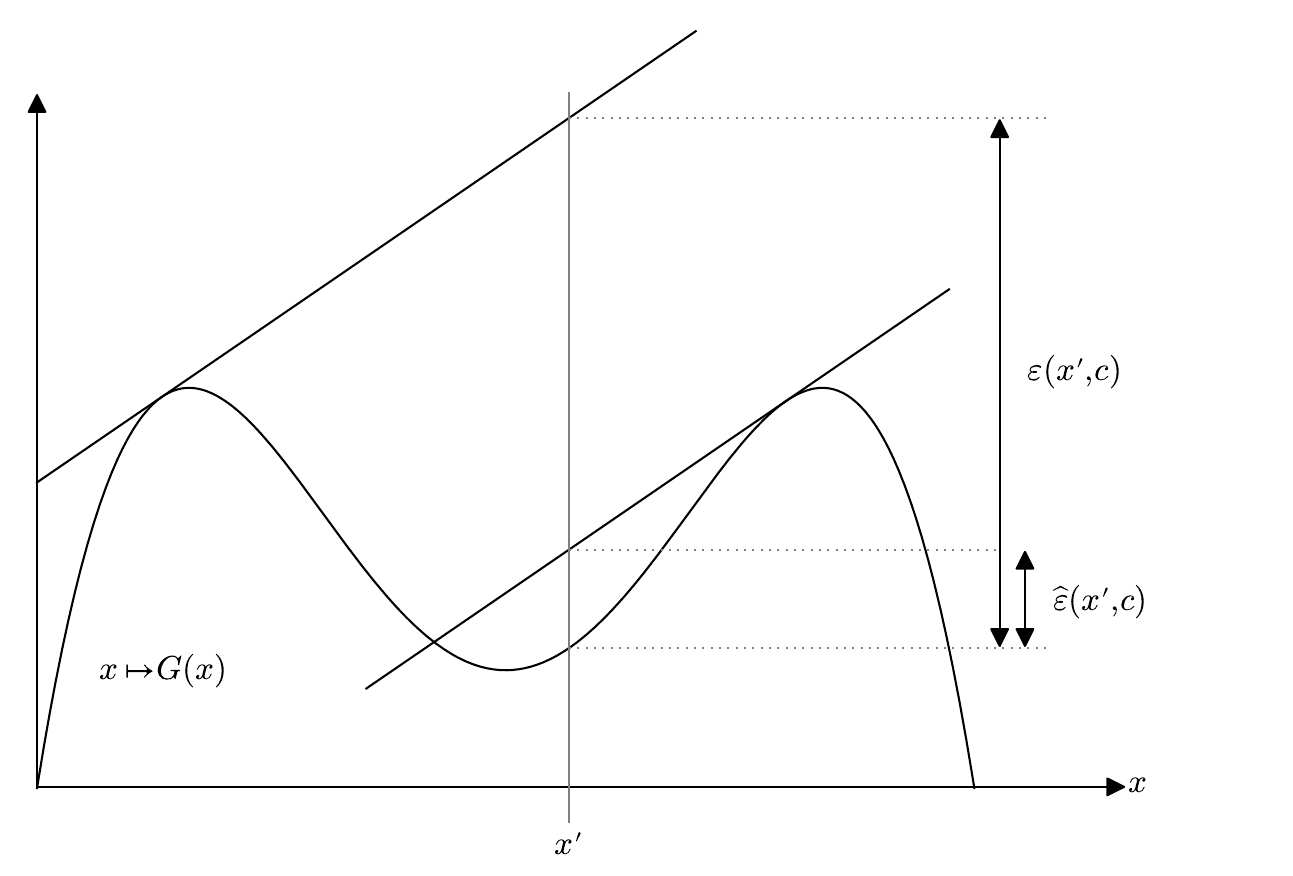}
\begin{minipage}{0.7\textwidth}
\caption[Definitions of $\widehat{\varepsilon}(x;c)$ and $\varepsilon(x;c)$]{This diagram illustrates $\widehat{\varepsilon}\left( x;c\right)$ and $\varepsilon\left( x;c\right)$ for a fixed $c$ in the case that $r=0$ and $X$ is a standard Brownian motion (i.e. $\psi \equiv 1$ and $\varphi(x)=x$ hence $F(x)=x$) 
\label{figure espilon star c} }
\end{minipage}
\end{center}

\end{figure} 

\begin{proof}
For the first statement, note that for each $c\in \mathbb{R}$, by definition
\begin{equation*}
\frac{G\left( x\right) +\widehat{\varepsilon }\left( x;c\right) }{\psi
\left( x\right) }+c\left( y-F\left( x\right) \right) \geq W\left( y\right)
\end{equation*}
for $y\in A_{c}\left( x\right) $ and equality holds for $y=F\left(
l^{x}\left( c,G\left( x\right) +\widehat{\varepsilon }\left( x;c\right)
\right) \right) $. Hence,
\begin{equation*}
\sup_{y\in A_{c}\left( x\right) }\left( W\left( y\right) -cy\right)
=W\left( F\left( x\right) \right) +\frac{\widehat{\varepsilon }\left(
x;c\right) }{\psi \left( x\right) }-cF\left( x\right)
\end{equation*}
and it follows that
\begin{equation*}
\sup_{c\in \mathbb{R}}\sup_{y\in A_{c}\left( x\right) }\left( c\left(
F\left( x\right) -y\right) +W\left( y\right) \right) \psi \left( x\right)
=\sup_{c\in \mathbb{R}}\left( G\left( x\right) +\widehat{\varepsilon}
\left( x;c\right) \right)
\end{equation*}
Let
\begin{equation*}
\varepsilon \left( x;c\right) :=\inf \left\{ \varepsilon \geq 0\,\left\vert
\,\left(\frac{G}{\psi}\right)(y) + c\left(F(x)-y\right) \leq \frac{G(x) +\varepsilon}{\psi(x)}
\quad \forall y\in \mathbb{R}\right. \right\} .
\end{equation*}
For each $c\in \mathbb{R}$, $\varepsilon \left( x;c\right) \geq \widehat{%
\varepsilon }\left( x;c\right) $ as illustrated in Figure \ref{figure espilon star c}
but to avoid contradicting their definitions $\inf_{c\in \mathbb{R}}\varepsilon
\left(x;c\right)=\sup_{c\in \mathbb{R}}\widehat{\varepsilon }\left( x;c\right)$
and hence the result follows from Lemma \ref{lemma constructive}.
Furthermore, it follows from the definition of the set $A_{1}\left( x\right)
$ that $\sup A_{1}\left( x\right) =G\left( x\right) +\sup_{c\in \mathbb{R}%
}\widehat{\varepsilon}\left( x;c\right)=W_{\ast\ast}(F(x))\psi(x)=V(x)$.\medskip
\end{proof}

The next corollary shows that Theorem \ref{Theorem natural
boundaries} is capable of handling the case when one (or more) of the
boundaries is absorbing.

\begin{corollary}
\label{corollary left absorbing}Take $x\in ( \alpha ,\beta )
\subset (a,b)$ and consider the stopped diffusion $%
X_{t}^{\alpha ,\beta }:=X_{t\wedge T_{\alpha ,\beta }}$ where $T_{\alpha
,\beta }=T_{\alpha }\wedge T_{\beta }\,$ and the corresponding optimal
stopping problem
\begin{equation}
V_{\alpha ,\beta }\left( x\right) =\sup_{\tau }E_{x}\left[ e^{-r\tau }
G(X_{\tau}^{\alpha ,\beta }) \right]
\label{stopping problem - absorbed left}
\end{equation}
for $G:\left( a,b\right) \rightarrow \mathbb{R}$ satisfying the assumptions
in Theorem \ref{Theorem natural boundaries}. Then
\begin{equation*}
V_{\alpha ,\beta }\left( x\right) =( W_{\left[ \alpha ,\beta \right]
})_{\ast \ast}\left( F\left( x\right) \right) \psi \left( x\right)
=(\widetilde{W}_{\left[ \alpha ,\beta \right] })_{\ast \ast}
(\widetilde{F}(x)) \varphi(x)
\end{equation*}
where
\begin{eqnarray*}
W_{\left[ \alpha ,\beta \right] }\left( y\right) &:=& W(y)
-\delta( y\,\vert\,F^{-1}\left( y\right) \in I\setminus \left[
\alpha ,\beta \right]) , \\
\widetilde{W}_{\left[ \alpha ,\beta \right] }\left( y\right) &:=& \widetilde{W}(y)
-\delta( y\,\vert\,\widetilde{F}^{-1}\left( y\right) \in
I\setminus \left[ \alpha ,\beta \right]) .
\end{eqnarray*}
Moreover, the stopping time which attains the supremum in (\ref{stopping
problem - diffusion}) is $\tau =T_{a^{\ast }}\wedge T_{b^{\ast }}$ where
\begin{align*}
a^{\ast} &=\sup \{ y\in \left[ \alpha ,x\right] \,\vert \,
W_{\left[ \alpha ,\beta \right] }\left( F\left( y\right) \right) =
(W_{\left[ \alpha ,\beta \right] })_{\ast \ast}\left( F\left( y\right)
\right) \} , \\
b^{\ast} &=\inf \{ y\in \left[ x,\beta \right] \,\vert\,
W_{\left[ \alpha ,\beta \right] }\left( F\left( y\right) \right) =
(W_{\left[ \alpha ,\beta \right] })_{\ast \ast }\left(F\left( y\right) \right)\} .
\end{align*}
\end{corollary}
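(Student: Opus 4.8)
The plan is to re-run the proof of Theorem \ref{Theorem natural boundaries} with the absorbing boundaries fixed at $\alpha$ and $\beta$ rather than sent to the natural boundaries by a limiting sequence $b_n \uparrow b$. The mechanism is the same characteristic-function device already used there: since $\delta(y\,|\,F^{-1}(y) \in I \setminus [\alpha,\beta])$ equals $+\infty$ precisely when $F^{-1}(y) \notin [\alpha,\beta]$, the modified function $W_{[\alpha,\beta]}$ agrees with $W$ on $[F(\alpha),F(\beta)]$ and equals $-\infty$ elsewhere. Consequently the concave conjugate $(W_{[\alpha,\beta]})_{\ast}(c) = \inf_{y \in [F(\alpha),F(\beta)]}(cy - W(y))$ records only the restriction of $W$ to the compact interval $[F(\alpha),F(\beta)]$, which is exactly the range of $F$ seen by the absorbed diffusion.

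First I would record the martingale structure for $X^{\alpha,\beta}$. On the compact interval $[\alpha,\beta]$ both $\varphi$ and $\psi$ are bounded and $r$-harmonic, so $(e^{-r(t \wedge T_{\alpha,\beta})}\varphi(X_{t}^{\alpha,\beta}))_{t \geq 0}$ is a bounded $(P_x,\mathcal{F}_{t \wedge T_{\alpha,\beta}})$-martingale; optional sampling then gives $E_x[\varphi(X_{\tau}^{\alpha,\beta})e^{-r\tau}] = \varphi(x)$ for all $x \in [\alpha,\beta]$ and all $\tau \leq T_{\alpha,\beta}$. Inserting an arbitrary $c\varphi$ into the objective exactly as in the derivation of (\ref{Vn}) yields
\begin{equation*}
V_{\alpha,\beta}(x) = \inf_{c \in \mathbb{R}}\left(c\varphi(x) - \inf_{\tau}E_x\left[\left(c\varphi(X_{\tau}^{\alpha,\beta}) - G(X_{\tau}^{\alpha,\beta})\right)e^{-r\tau}\right]\right).
\end{equation*}

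Next I would reduce the inner infimum to an infimum over exit times of subintervals. The cited existence result (\cite{PS} Chapter 1 Theorem 2.7) guarantees the optimiser is the exit time $T_y \wedge T_z$ from an open continuation region, and since the diffusion is absorbed at $\alpha,\beta$ this region satisfies $\alpha \leq y \leq x \leq z \leq \beta$. Applying the Laplace identities (\ref{laplace}) turns the inner infimum into $\inf_{\alpha \leq y \leq x \leq z \leq \beta}$ of the convex-combination expression appearing in the proof of Theorem \ref{Theorem natural boundaries}. The decisive simplification is that $[\alpha,\beta]$ is compact, so in contrast to the theorem no passage to the limit is required: this infimum equals $(W_{[\alpha,\beta]})_{\ast}(c)\psi(x)$ directly, and the outer infimum over $c$ then produces $V_{\alpha,\beta}(x) = (W_{[\alpha,\beta]})_{\ast\ast}(F(x))\psi(x)$. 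The representation through $\widetilde{W}_{[\alpha,\beta]}$ and $\widetilde{F}$ follows by the symmetric argument, interchanging $\varphi$ and $\psi$ and using the second equalities in (\ref{laplace}).

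The step that carries the content is precisely this compactness observation replacing the limiting argument of Theorem \ref{Theorem natural boundaries}: one must verify that absorption at $\alpha$ and $\beta$ confines both the conjugate $(W_{[\alpha,\beta]})_{\ast}$ and the minimising pair $(y,z)$ to $[F(\alpha),F(\beta)]$, so that no behaviour of $W$ near the natural boundaries $a,b$ can intervene. Granting this, the statement on the optimal stopping time is read off from the value function exactly as before: the stopping region is $\{x \in [\alpha,\beta] \,:\, (W_{[\alpha,\beta]})_{\ast\ast}(F(x))\psi(x) = G(x)\}$, its complement in $[\alpha,\beta]$ is the continuation region, and $a^{\ast}, b^{\ast}$ are the innermost points of the stopping region flanking $x$, yielding $\tau = T_{a^{\ast}} \wedge T_{b^{\ast}}$.
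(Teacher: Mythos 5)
Your overall architecture (introducing an arbitrary $c\varphi$, reducing the inner infimum to exit times of intervals, applying the Laplace identities (\ref{laplace})) matches the paper's, but the step you yourself flag as carrying the content is wrong as stated. With $y\leq x\leq z$ confined to $[\alpha,\beta]$ and $\beta<b$, the quantity you must evaluate is
\[
\inf_{\alpha\leq y\leq x\leq z\leq\beta}\left(\frac{c\varphi(y)-G(y)}{\psi(y)}\,\frac{F(z)-F(x)}{F(z)-F(y)}+\frac{c\varphi(z)-G(z)}{\psi(z)}\,\frac{F(x)-F(y)}{F(z)-F(y)}\right)\psi(x),
\]
a convex combination in the $F$-scale of values of $g(w):=(c\varphi(w)-G(w))/\psi(w)$ at one point to the left of $x$ and one to the right. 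Since $F(\beta)<\infty$, the weight $\frac{F(x)-F(y)}{F(z)-F(y)}$ on the right-hand point is at least $\frac{F(x)-F(y)}{F(\beta)-F(y)}>0$ whenever $y<x$; so if the minimiser of $g$ over $[\alpha,\beta]$ lies strictly to the left of $x$ while $g$ exceeds its minimum by a fixed margin on $[x,\beta]$, every admissible combination charges a value strictly above $\min g$, and the displayed infimum is strictly greater than $(W_{[\alpha,\beta]})_{\ast}(c)\,\psi(x)$. Compactness does not make the limiting argument unnecessary; it removes the mechanism that handled precisely this configuration. In Theorem \ref{Theorem natural boundaries} this is the case ``$z^{+}\leq x$'', resolved by letting $z\uparrow b$ so that $F(z)\uparrow\infty$ kills the weight on the $z$-term; that escape is unavailable once $z\leq\beta$. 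What the constrained double infimum actually equals is the convex hull of $u\mapsto cu-W(u)$ over $[F(\alpha),F(\beta)]$ evaluated at $F(x)$, namely $c\varphi(x)-(W_{[\alpha,\beta]})_{\ast\ast}(F(x))\psi(x)$; this does yield the stated formula (the outer infimum over $c$ then becomes vacuous, or equivalently your identity with $(W_{[\alpha,\beta]})_{\ast}(c)$ holds only at the supporting slopes $c$ of $(W_{[\alpha,\beta]})_{\ast\ast}$ at $F(x)$), but that is a different argument from the one you propose and has to be supplied.

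The paper takes a route designed to avoid exactly this problem: it extends $G$ to all of $(a,b)$ by the constants $G(\alpha)$ and $G(\beta)$ (the function $\widetilde{G}$), keeps the inner infimum over the full range $a\leq y\leq x\leq z\leq b$ so that the limit $z\uparrow b$ from Theorem \ref{Theorem natural boundaries} remains available, and then uses the $r$-harmonicity of $\varphi$ together with the comparison $E_x[\widetilde{G}(X_{T_y})e^{-rT_y}]<G(\alpha)E_x[e^{-rT_\alpha}]$ for $y<\alpha$ (and its mirror image beyond $\beta$) to show that the resulting single-variable infimum is attained inside $[\alpha,\beta]$. Either repair --- the convex-hull identification sketched above, or the paper's extension-plus-comparison --- closes the gap; as written, your proof does not. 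The remaining parts of your proposal (the martingale/optional sampling step on the compact interval, the reading-off of $a^{\ast}$ and $b^{\ast}$ from the coincidence set) are fine.
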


\begin{proof}
An optimal stopping time for a problem of (\ref{stopping problem - absorbed
left}) exists (see \cite{PS} Chapter 1 Theorem 2.7) and is the exit time
from a open set containing $x$ so we may write $\tau^{\ast}=T_{a^{\ast
}}\wedge T_{b^{\ast }}$ for some $\alpha \leq a^{\ast }\leq x\leq b^{\ast
}\leq \beta $. Suppose that $\beta <b$ then as shown in the proof of
Theorem \ref{Theorem natural boundaries}, the value function of the optimal
stopping problem (\ref{stopping problem - absorbed left}) can be written as
\begin{equation}
V_{\alpha,\beta}\left( x\right) =\inf_{c\in \mathbb{R}}\left( c\varphi\left(
x\right) -R_{c}\left( x\right) \right) .  \label{step 7}
\end{equation}
where
\begin{equation*}
R_{c}\left( x\right) :=\inf_{a\leq y\leq x\leq z\leq b}\left(
\frac{c\varphi \left( y\right) -\widetilde{G}\left( y\right) }{\psi \left(
y\right) }\frac{F\left( z\right) -F\left( x\right) }{F\left( z\right)
-F\left( y\right) }+\frac{c\varphi \left( z\right) -\widetilde{G}\left(
z\right) }{\psi \left( z\right) }\frac{F\left( x\right) -F\left( y\right) }
{F\left( z\right) -F\left( y\right) }\right) \psi (x)
\end{equation*}
and
\begin{equation*}
\widetilde{G}\left( y\right) :=G\left( \alpha\right) \mathbb{I}_{\left( a,\alpha %
\right] }+G\left( y\right) \mathbb{I}_{\left( \alpha ,b\right) }+G\left(
\beta\right) \mathbb{I}_{\left[ \beta ,b\right) }.
\end{equation*}
As $\varphi $ is $r$-harmonic on $\left[y,b\right) $ for all $y>a$, it follows that for any $x\in I$
\begin{eqnarray*}
\frac{c\varphi \left( y\right) -\widetilde{G}\left( y\right) }{\psi \left(
y\right) }\psi \left( x\right) &=&E_{x}[ e^{-rT_{y}} ( c\varphi \left(
X_{T_{y}}\right) -\widetilde{G}\left( X_{T_{y}}\right)) ] \\
&=&c\varphi \left( x\right) -E_{x}[ e^{-rT_{y}} \widetilde{G}\left(X_{T_{y}}\right) ] \\
&>&c\varphi \left( x\right) -G\left( \alpha \right) E_{x}\left[
e^{-rT_{\alpha }}\right] =\frac{c\varphi \left( \alpha \right) -G\left(
\alpha \right) }{\psi \left( \alpha \right) }\psi \left( x\right)
\end{eqnarray*}
for all $y\in \left( a,\alpha \right)$. Likewise for $y\in \left( \beta ,b\right) $,
\begin{equation*}
\frac{c\varphi \left( y\right) -\widetilde{G}\left( y\right) }{\psi \left(
y\right) }\psi \left( x\right) >c\varphi \left( x\right) -G\left( \beta
\right) E_{x}\left[ e^{-rT_{\beta }}\right] =\frac{c\varphi \left( \alpha
\right) -G\left( \beta \right) }{\psi \left( \beta \right) }\psi \left(
x\right) .
\end{equation*}
Hence, an argument similar to that presented in Theorem \ref{Theorem natural
boundaries} can be used to show that
\begin{equation*}
R_{c}\left( x\right) =\inf_{y\in \left( a,b\right) }\left( \frac{c\varphi
\left( y\right) -\widetilde{G}\left( y\right) }{\psi \left( y\right) }
\right)\psi \left( x\right) =\inf_{y\in \left[ \alpha ,\beta \right]
}\left( \frac{c\varphi \left( y\right) -G\left( y\right) }{\psi \left(
y\right) }\right) \psi \left( x\right) =:W_{\ast }\left( c\right) .
\end{equation*} 
Thus (\ref{step 7}) reads
\begin{equation*}
V_{\alpha,\beta}\left( x\right) =\inf_{c\in \mathbb{R}}\left( c\varphi \left(
x\right) -\inf_{y\in \left[ \alpha ,\beta \right] }\left( \frac{c\varphi
\left( y\right) -G\left( y\right) }{\psi \left( y\right) }\psi \left(
x\right) \right) \right) =\left( W_{\left[ \alpha ,\beta \right] }\right)
_{\ast \ast }\left( F\left( x\right) \right) \psi \left( x\right) .
\end{equation*}
The case $\beta=b$ can be handled using an approximating sequence of domains as
in Theorem \ref{Theorem natural boundaries}. The argument using the other ratio of fundamental
solutions follows analogously.
\end{proof}

\medskip

This section concludes with a basic example, the perpetual American put option.

\begin{example}
\label{Example Legendre put}
Take $\sigma > 0$, $r>0$ and define $X$ to be the solution to
$dX_{t}=r X_{t}\,dt+\sigma X_{t}\,dW_{t}$ with $X_{0}=x> 0$ where $W$ is a
standard Brownian motion. Consider the perpetual American put option, the gains function of which is
$G(x)=(K-x)^{+}$ for a strike price $K>0$. The risk neutral value of this option is
\begin{equation}
V(x):=\sup_{\tau }E_{x}\left[ e^{-r\tau } \left(K-X_{\tau
}\right)^{+} \right] .  \label{problem - put}
\end{equation}
The infinitesimal generator associated with the geometric
Brownian motion $X$ is
\begin{equation*}
\mathbb{L}_{X}u:=r x\frac{du}{dx}+\frac{1}{2}\sigma ^{2}x^{2}\frac{d^{2}u}{%
dx^{2}}
\end{equation*}
and the ODE $\mathbb{L}_{X}u=ru$ has two fundamental solutions $\varphi \left( x\right) =x$
and $\psi \left(x\right) =x^{-2r/\sigma ^{2}}$. Let $-\widetilde{F}\left( x\right) =\left(
\psi /\varphi \right) \left( x\right) =x^{-1/\alpha}$ where $\alpha
:=1/\left( 1+2r/\sigma ^{2}\right) \in \left[ 0,1\right] $. The rescaled gains function
associated with the problem (\ref{problem - put}) is
\begin{equation*}
\widetilde{G}\left(y\right) :=
(Ky^{\alpha }-1)^+ .
\end{equation*}
The function $y\mapsto \widetilde{G}\left( y\right)$ has $\widetilde{G}^{\prime \prime }\left( y\right) \leq 0$ for all $y\in \mathbb{R}_+$ 
and hence to find $y\mapsto \widetilde{G}_{\ast \ast}\left(y\right)$ we need 
only to find $y\geq 0$ such that
\begin{equation}
\widetilde{G}\left( y\right) -\widetilde{G}^{\prime }\left( y\right)y=0.  \label{ODE put}
\end{equation}
The unique solution to (\ref{ODE put}) is $y^{\ast}=1/\left( K\left(
1-\alpha \right) \right) ^{1/\alpha }$ and
\begin{equation*}
\widetilde{G}_{\ast \ast }\left( y\right) =\left\{
\begin{array}{cc}
\widetilde{G}^{\prime}\left( y^{\ast}\right) y & \mathrm{for}\;y\in
\left[ 0,y^{\ast}\right],  \\
\widetilde{G}\left( y\right)  & \mathrm{for}\;y>y^{\ast}.
\end{array}
\right. 
\end{equation*}
Applying Theorem \ref{Theorem natural boundaries},
\begin{equation}
V\left( x\right) =\widetilde{G}_{\ast \ast }( -\widetilde{F}
\left( x\right) ) \varphi \left( x\right) =\left\{
\begin{array}{cc}
\frac{\sigma^{2}}{2r}\left( \frac{K}{\left( 1+\sigma ^{2}/2r\right) }
\right)^{1+2r/\sigma ^{2}}x^{-2r/\sigma^{2}} & \mathrm{for}\;x\geq x^{\ast} \\
K-x & \mathrm{for}\text{ }x\in \left[ 0,x^{\ast}\right]
\label{value function put}
\end{array}
\right.
\end{equation}
where $x^{\ast}=(y^{\ast})^{\alpha}=K/\left( 1+\sigma ^{2}/2r\right)$.
Figure \ref{figure put} illustrates the original and transformed payoff functions,
the concave biconjugate of the transformed payoff and the corresponding value function.

\begin{figure}[h!]
\begin{center}

\begin{tabular}{cc}
\includegraphics[width=7cm]{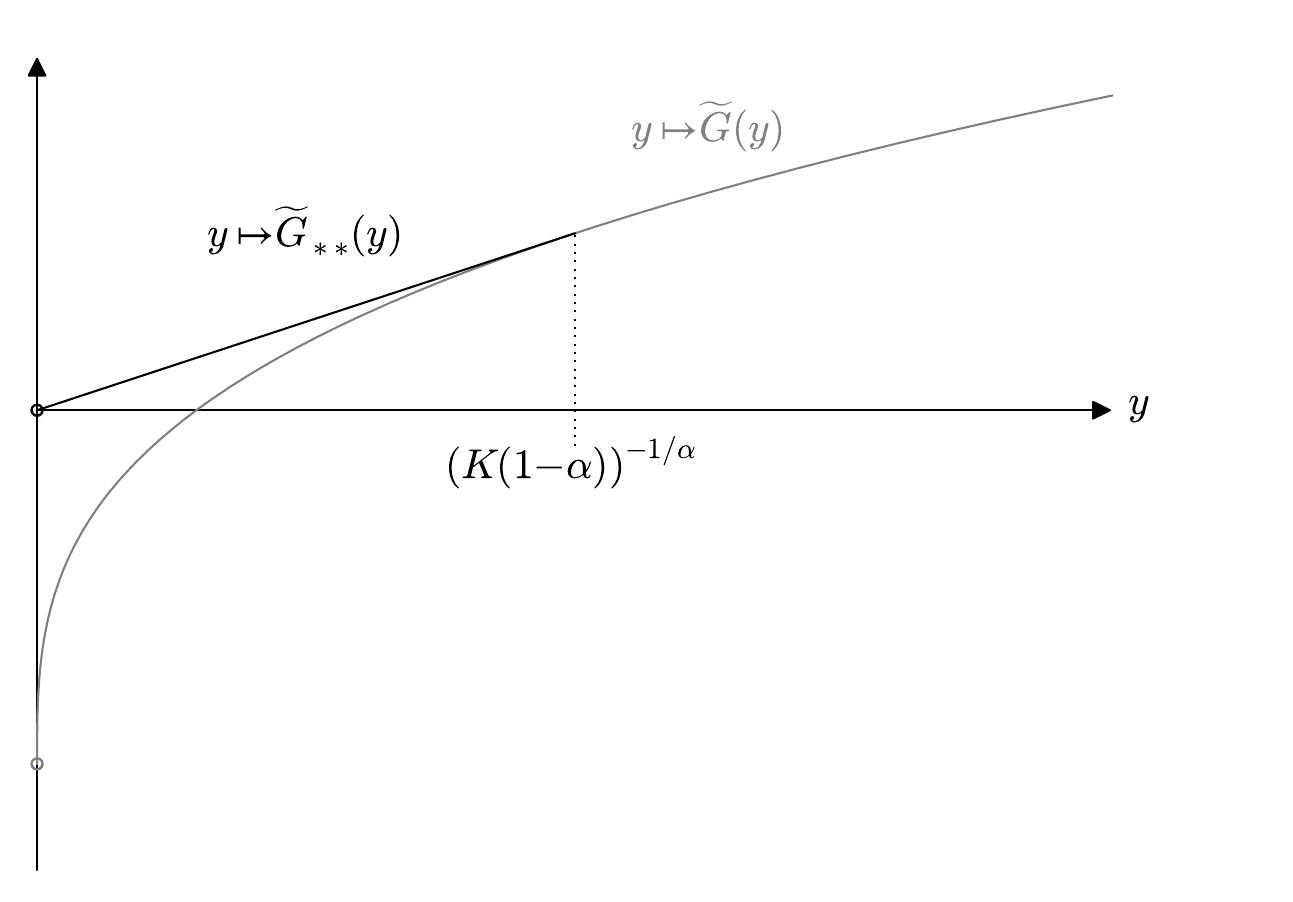} & \includegraphics[width=7cm]{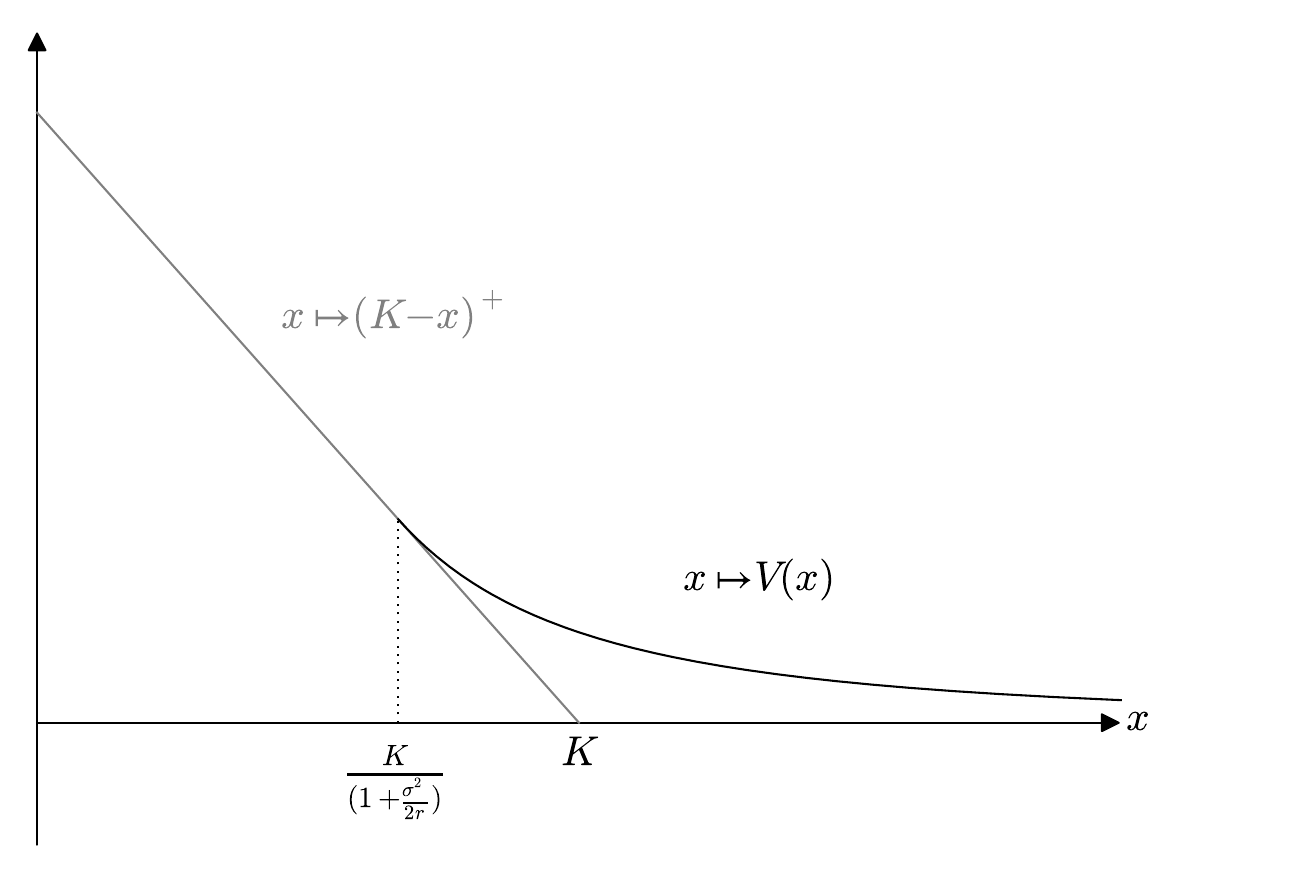} 
\end{tabular} 

\begin{minipage}{0.7\textwidth}
\caption[Transformed payoff of Perpetual put option]{In the left figure,
the transformed payoff function $\widetilde{G}$ is drawn along with its
concave biconjugate. The original payoff $G(x)=(K-x)^{+}$ and the corresponding
value function $V$ is in the figure on the right hand side.
\label{figure put}}
\end{minipage}
\end{center}
\end{figure}

Moreover, the continuation and stopping region for the problem (\ref{problem - put}) are
\begin{equation*}
\mathcal{D}:=\left\{ \left. x\in \mathbb{R}_{+}\,\right\vert \,V\left(
x\right) =G\left( x\right) \right\} =\left[ 0,x^{\ast}\right] \qquad
,\qquad \mathcal{C}:=\left\{ \left. x\in \mathbb{R}_{+}\,\right\vert
\,V\left( x\right) >G\left( x\right) \right\} =\left( x^{\ast},+\infty
\right) .
\end{equation*}
which coincides with the established solution (see for example \cite{PS} Section 25.1).
\end{example}

\section{An extension of the Legendre transform}

\label{section legendre transform}

The purpose of this section is to extend the Legendre transform so that the observations 
about optimal stopping in the previous section can be applied to optimal stopping games. 
The function $G$ represents the payoff of the maximising agent while the function $H$ represents the payoff to the minimising agent. The two
continuous gains functions $G,H$ are such that $G(x)\leq H(x)$ for all $x \in I$
and satisfy the assumptions (\ref{integ assumption}) and (\ref{boundary assumption}) unless otherwise stated.

Inspired by the transformation used in Theorem \ref{Theorem natural boundaries} introduce a pair of rescaled gains functions $W^{G}: (0,\infty) \rightarrow \mathbb{R}$ 
and $W_{H}: (0,\infty) \rightarrow \mathbb{R}$ defined via
\begin{equation}
W^{G}(y):= \left(\frac{G}{\psi}\right)\circ F^{-1}(y) \label{W^G function}
\end{equation}
and 
\begin{equation}
W_{H}(y):= \left(\frac{H}{\psi}\right)\circ F^{-1}(y). \label{W_H function}
\end{equation}
Under the assumptions (\ref{integ assumption}) and (\ref{boundary assumption}) we have
\begin{equation}
\lim_{x\downarrow 0}(W_H(x)-W^G(x))=\lim_{x \uparrow \infty}(W_H(x)-W^{G}(x))=0. \label{stuck together}
\end{equation}
This assumption can be relaxed a little as discussed in Remark \ref{Remark problem with gap} below. These definitions could equally well be formulated with respect to the other ratio of the fundamental solutions but for clarity we shall focus only on these two expressions.

The aim of this section is to define and describe a version of the 
convex biconjugate of the function $W_{H}$ which is modified to ensure that
it remains inside $\mathrm{epi}(W^G)$. At the same time, we define a version 
of the concave biconjugate of the function $W^{G}$ which is modified to ensure that
it remains inside $\mathrm{cl}(\mathbb{R}^2\setminus\mathrm{epi}(W_H))$. This is 
achieved by defining an extension of the $\varepsilon$-sub/superdifferential 
typically used in convex analysis. 

The main result in this section is Theorem \ref{Theorem duality} which
is the purely analytical version of \cite{Peskir} Theorem 4.1. Theorem \ref{Theorem
duality} shows that the convex biconjugate respecting the lower barrier $W_{H}$ and
the concave biconjugate respecting the upper barrier $W^{G}$ coincide. This `duality' result naturally follows from the spike-variations we use to construct these extensions of the Legendre transform. Moreover, the duality between these two extensions of the Legendre transform are used in Section \ref{section games} to provide a new (purely analytical) proof that the optimal stopping game (\ref{lower value})-(\ref{upper value}) exhibits both a Stackelberg and a Nash equilibrium.

For a given function $f:\mathbb{R}_+\rightarrow \mathbb{R}$, let
\begin{eqnarray*}
l_{f}^{x}(c,p) &=\sup \left\{ z\leq x\,\left\vert \,f(z)-p =c(z-x) \right. \right\} , \\
r_{f}^{x}(c,p) &=\inf \left\{ y\geq x\,\left\vert \,f(y)-p =c(y-x) \right. \right\} ,
\end{eqnarray*}
with the convention that $\sup \emptyset =0$ and $\inf
\emptyset =+\infty $. For ease of notation, let $l_{G}^{x}(c,p):=l_{W^{G}}^{x}(c,p)$ 
and $l_{H}^{x}(c,p):=l_{W_{H}}^{x}(c,p)$. The point $l_{G}^{x}(c,W_{H}(x))$
(resp. $r_{G}^{x}(c,W_{H}(x))$) is the last (resp. first) time the line passing through $(x,W_{H}(x))$ with slope $c$ intercepts $W^G$ before (resp. after) $x$.

Define the sperbdifferential of $W_H$ in the presence of the lower boundary 
$W^G$ as
\begin{equation}
\partial^{G}H(x) :=\left\{ c\in \mathbb{R}\,\left\vert \,W_H(y)-W_H(x)\geq c(y-x)\quad\forall
y\in (l_{G}^{x}(c,W_H(x)),r_{G}^{x}(c,W_H(x))) \right. \right\} .  \label{subdiff G}
\end{equation}
If the tangent $y\mapsto W_H(x) +c(y-x)$ minorises $y\mapsto W_H(y)$ prior to intercepting 
the lower boundary $y\mapsto W^G(y)$, then we refer to $c$ as a `supergradient of $W_H$ in the presence of 
$W^G$ at $x$', i.e. $c\in \partial^{G}H\left( x\right)$, as shown in Figure 
\ref{figure subsuperdiffs}. Similarly, the subdifferential of $W^G$ in the presence of the 
upper boundary $W_H$ is
\begin{equation}
\partial^{H}G(x) :=\left\{ c\in \mathbb{R}\,\left\vert\, W^G(y)-W^G(x) \leq c(y-x)\quad\forall
y\in ( l_{H}^{x}(c,W^G(x)),r_{H}^{x}(c,W^G(x))) \right. \right\} .  \label{Superdiff H}
\end{equation}
If the tangent $y\mapsto W^G(x) +c(y-x)$ majorises $y\mapsto W^G(y)$ prior to intercepting 
the upper boundary $y\mapsto W_H(y)$, then we refer to $c$ as a `subgradient of $G$ in the presence of 
$H\,$at $x$', i.e. $c\in \partial^{H}G(x)$, as illustrated in Figure \ref{figure subsuperdiffs}.

\begin{figure}[h!]
\begin{center}
\includegraphics[width=7cm]{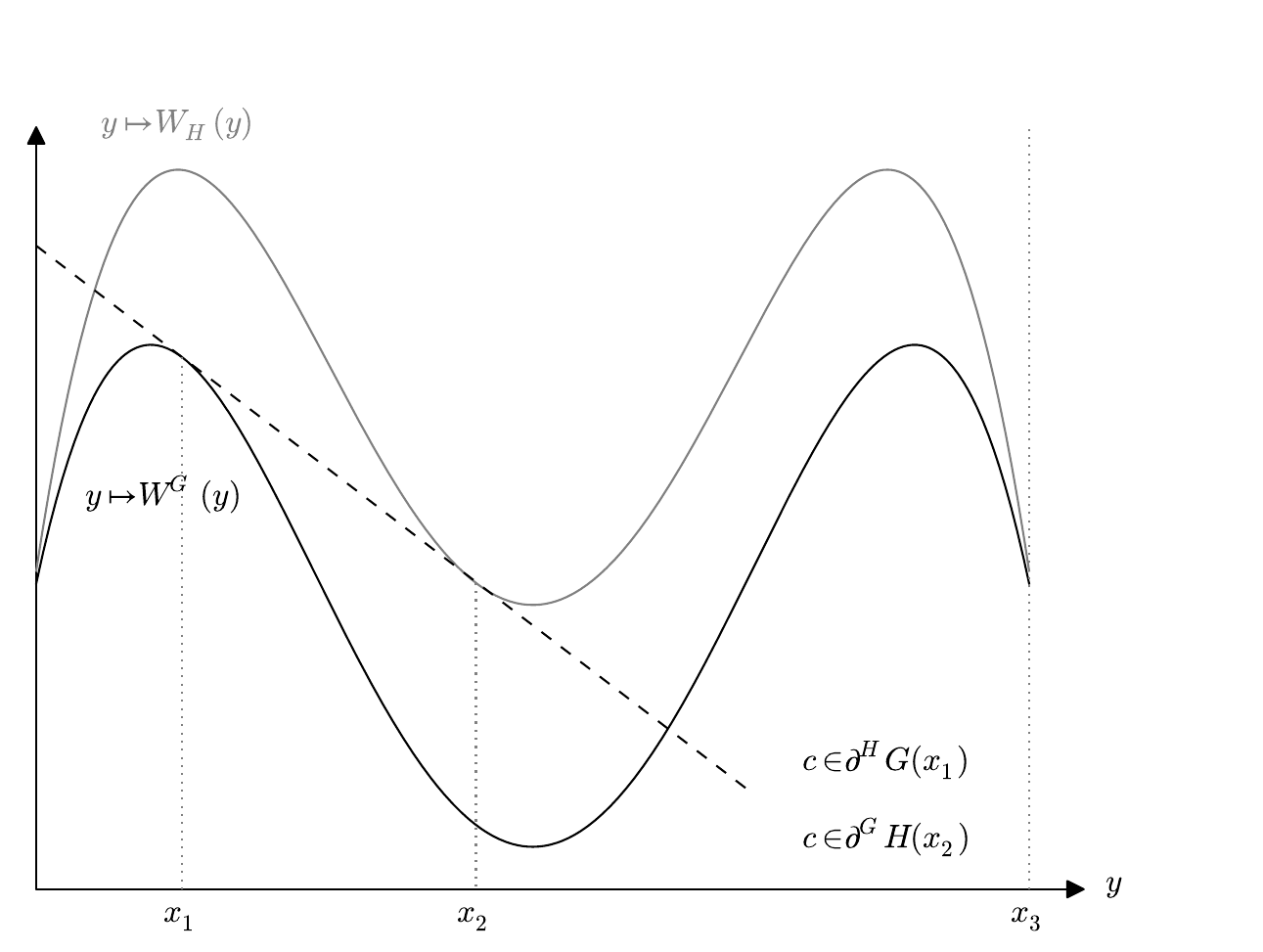}

\begin{minipage}{0.7\textwidth}
\caption[Definition modified sub-/superdifferentials]{In this figure $W^G(y)=W_H(y)$ for all $y\geq x_3$. The slope of the dashed black line, $c$, is a supergradient of $W^G$ in the presence of $W_H$ at $x_1$ and a subgradient of $W_H$ in the presence of $W^G$ at $x_2$.
\label{figure subsuperdiffs}}
\end{minipage}
\end{center}

\end{figure}

The $\delta$-superdifferential of $W_H$ in the presence of the lower boundary $W^G$ is 
defined as
\begin{align}
\partial^{G}_{\delta}H(x)&:=\left\{ c\in \mathbb{R}\,\left\vert \,W_H(y)-W_H(x)+\delta \geq c(y-x) \right.\right. \notag \\ 
& \qquad \qquad \left. \forall y\in (l_{G}^{x}(c,W_H(x)-\delta ),r_{G}^{x}(c,W_H(x)-\delta)) \right\} . \label{delta-sub}
\end{align}
When $c$ is a $\delta$-supergradient of $W_H$ in the presence of the lower boundary $W^G$, 
i.e. $c \in \partial^{G}_{\delta}H(x)$, it is possible to draw a line with slope $c$ through the 
point $(x,W_H(x)-\delta)$ which minorises $y\mapsto W_H(y)$ prior to intercepting 
the lower boundary $y\mapsto W^G(y)$. For example $0\in \partial^{G}_{\delta}G(x_2)$ in 
Figure \ref{figure subsuperdiffs2}. Similarly, the $\varepsilon$-subdifferential of $W^G$ 
in the presence of the upper boundary $W_H$ is defined as
\begin{align}
\partial^{H}_{\varepsilon}G(x) &:=\left\{ c\in \mathbb{R}\,\left\vert\, W^G(y)-W^G(x)-\varepsilon \leq c(y-x)\right.\right. \notag \\
& \qquad \qquad \left. \forall y\in ( l_{H}^{x}(c,W^G(x)+\varepsilon ),r_{H}^{x}(c,W^G(x)+\varepsilon)) \right\} \label{epsilon-super}.
\end{align}
When $c$ is an $\varepsilon$-subgradient of $W^G$ in the presence of the upper boundary $W_H$, 
i.e. $c \in \partial^{H}_{\varepsilon}G(x)$, it is possible to draw a line with slope $c$ through the 
point $(x,W^G(x)+\varepsilon)$ which dominates $y\mapsto W^G(y)$ prior to intercepting 
the upper boundary $y\mapsto W_H(y)$. For example $0 \in \partial^{H}_{\varepsilon}G(x_1)$ in 
Figure \ref{figure subsuperdiffs2}.

\begin{figure}[h!]
\begin{center}
\includegraphics[width=7cm]{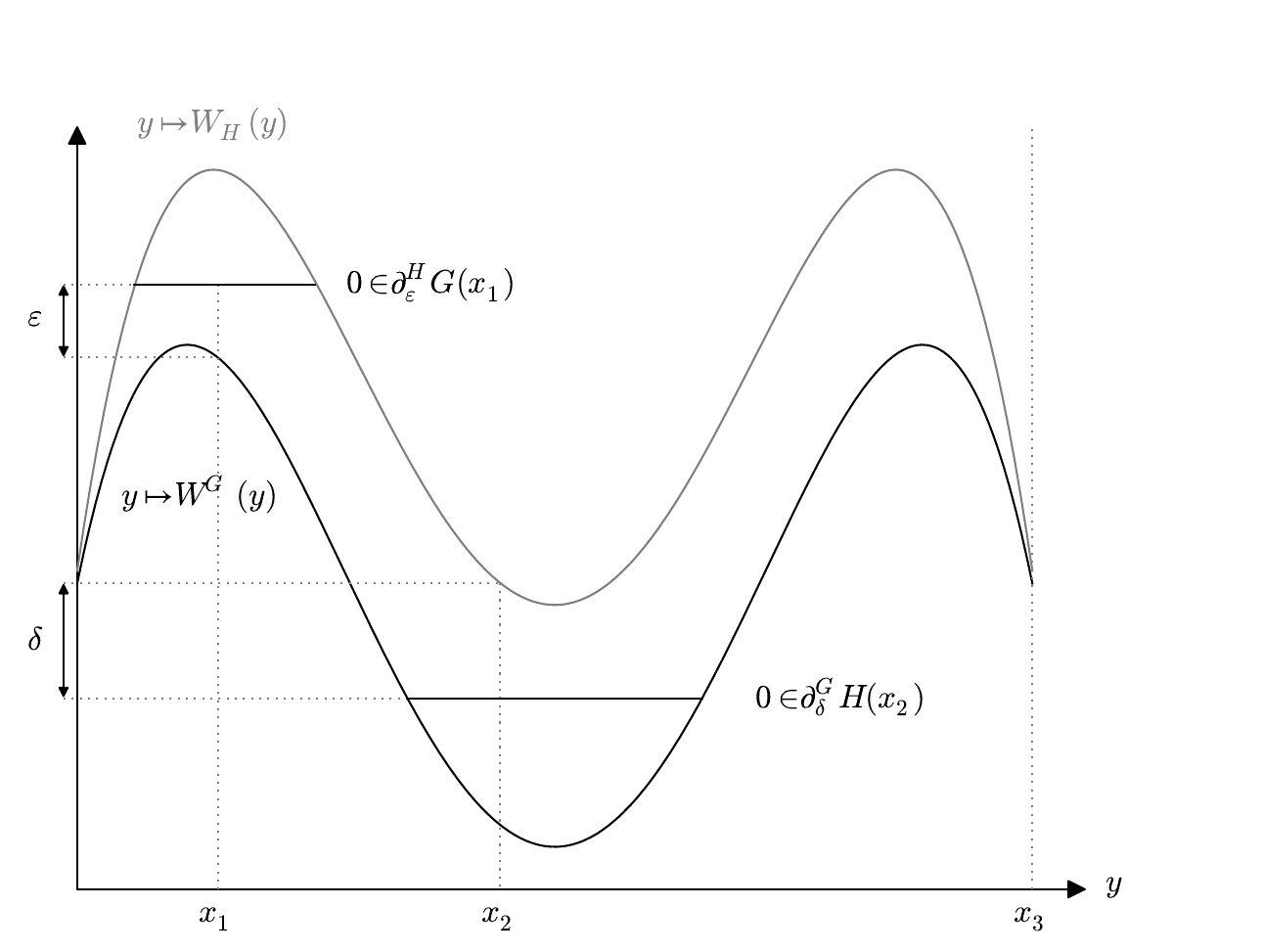}

\begin{minipage}{0.7\textwidth}
\caption[Definition modified $\delta$-sub-/$\varepsilon$-superdifferentials]{$c=0$
is a $\varepsilon$-subgradient of $W^G$ in the presence of $W_H$ at $x_1$ and
a $\delta$-supergradient of $W_H$ in the presence of $W^G$ at $x_2$.
\label{figure subsuperdiffs2}}
\end{minipage}
\end{center}
\end{figure}
 
Let 
\begin{equation}
\varepsilon_{c}^{\ast}(x) :=\inf \left\{ \left. \varepsilon \in [ 0,W_H(x) -W^G(x)] 
\,\right\vert \;c\in \partial^{H}_{\varepsilon}G(x) \right\} \label{epsilon-star}
\end{equation}
and 
\begin{equation}
\delta_{c}^{\ast}(x) :=\inf \left\{ \left. \delta \in [ 0,W_H(x) -W^G(x)] 
\,\right\vert \;c\in \partial^{G}_{\delta}H(x) \right\} \label{delta-star}
\end{equation}
which are the smallest spike variations that can be made in $W^G$, resp. $W_H$ at $x$ 
such that $c \in \partial^{H}_{\varepsilon}G(x)$, resp. $c \in \partial^{G}_{\delta}H(x)$.
The quantities (\ref{epsilon-star}) and (\ref{delta-star}) are illustrated in Figure \ref{Figure important properties}.

\begin{remark}  
\label{Remark properties of delta-star}
It follows from these definitions that for all $\delta> \delta_c^{\ast}(x)$ 
\begin{equation*}
W_H(y)> W_H(x)- \delta +c(y-x) \qquad \forall\,y\in [l_G^x(c,W_H(x)-\delta ),r_G^x(c, W_H(x)-\delta)] .
\end{equation*}
Whereas, for $\delta < \delta_c^{\ast}(x)$ there exists $x^{\prime} \in [l_G^x(c,W_H(x)-\delta ),r_G^x(c, W_H(x)-\delta)]$
such that $W_H(x^{\prime})< W_H(x)- \delta +c(x^{\prime}-x)$. These two statements imply that either: 
\begin{itemize}
\item[(a)] There exists $z\in [l_G^x(c,W_H(x)-\delta_c^{\ast}(x)),r_G^x(c, W_H(x)-\delta_c^{\ast}(x))]$ such that 
$c \in \partial^G H(z)$ and/or 
\item[(b)] $\delta_c^{\ast}(z)=W_H(z)-W^G(z)$ for $z=l_G^x(c,W_H(x)-\delta_c^{\ast}(x))$ and/or 
$z=r_G^x(c,W_H(x)-\delta_c^{\ast}(x))$. 
\end{itemize}
The top left panel of Figure \ref{Figure important properties} shows a point where only condition (a) holds, whereas
the bottom left panel illustrates a situation where only condition (b) holds. 
Similarly, for all $\varepsilon > \varepsilon_c^{\ast}(x)$
\begin{equation*}
W^G(y)> W^G(x)+\varepsilon +c(y-x) \qquad \forall\,y\in [l_H^x(c,W^G(x)+\varepsilon ),r_H^x(c, W^G(x)+\varepsilon)] .
\end{equation*}
Whereas, for $\varepsilon < \varepsilon_c^{\ast}(x)$ there exists $x^{\prime} \in [l_H^x(c,W^G(x)-
\varepsilon),r_H^x(c, W^G(x)-\varepsilon)]$ such that $W^G(x^{\prime})< W^G(x)+\varepsilon +c(x^{\prime}-x)$.
These two statements imply that either: 
\begin{itemize}
\item[(a)] There exists $z\in [l_H^x(c,W^G(x)+\varepsilon_c^{\ast}(x)),r_H^x(c,W^G(x)+\varepsilon_c^{\ast}(x))]$ 
such that $c \in \partial^H G(z)$ and/or 
\item[(b)] $\delta_c^{\ast}(z)=W_H(z)-W^G(z)$ for $z=l_G^x(c,W_H(x)-\delta_c^{\ast}(x))$ and/or 
$z=r_G^x(c,W_H(x)-\delta_c^{\ast}(x))$.
\end{itemize}
The top right panel of Figure \ref{Figure important properties} shows a point where only condition (a) holds, whereas
the bottom right panel illustrates a situation where only condition (b) holds.
\end{remark}

\begin{figure}[h!]
\begin{center}

\begin{tabular}{cc}
\includegraphics[width=7cm]{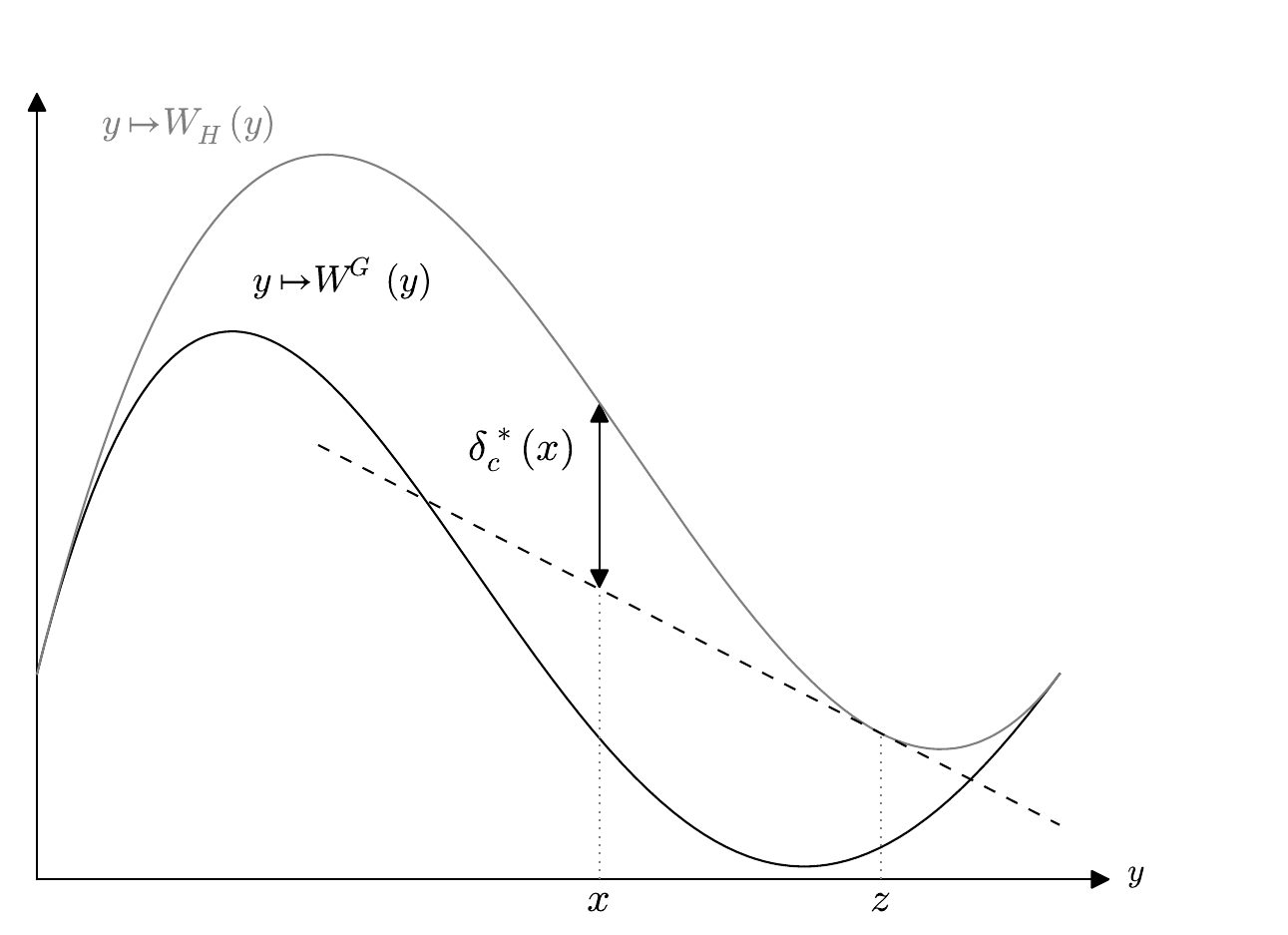} & \includegraphics[width=7cm]{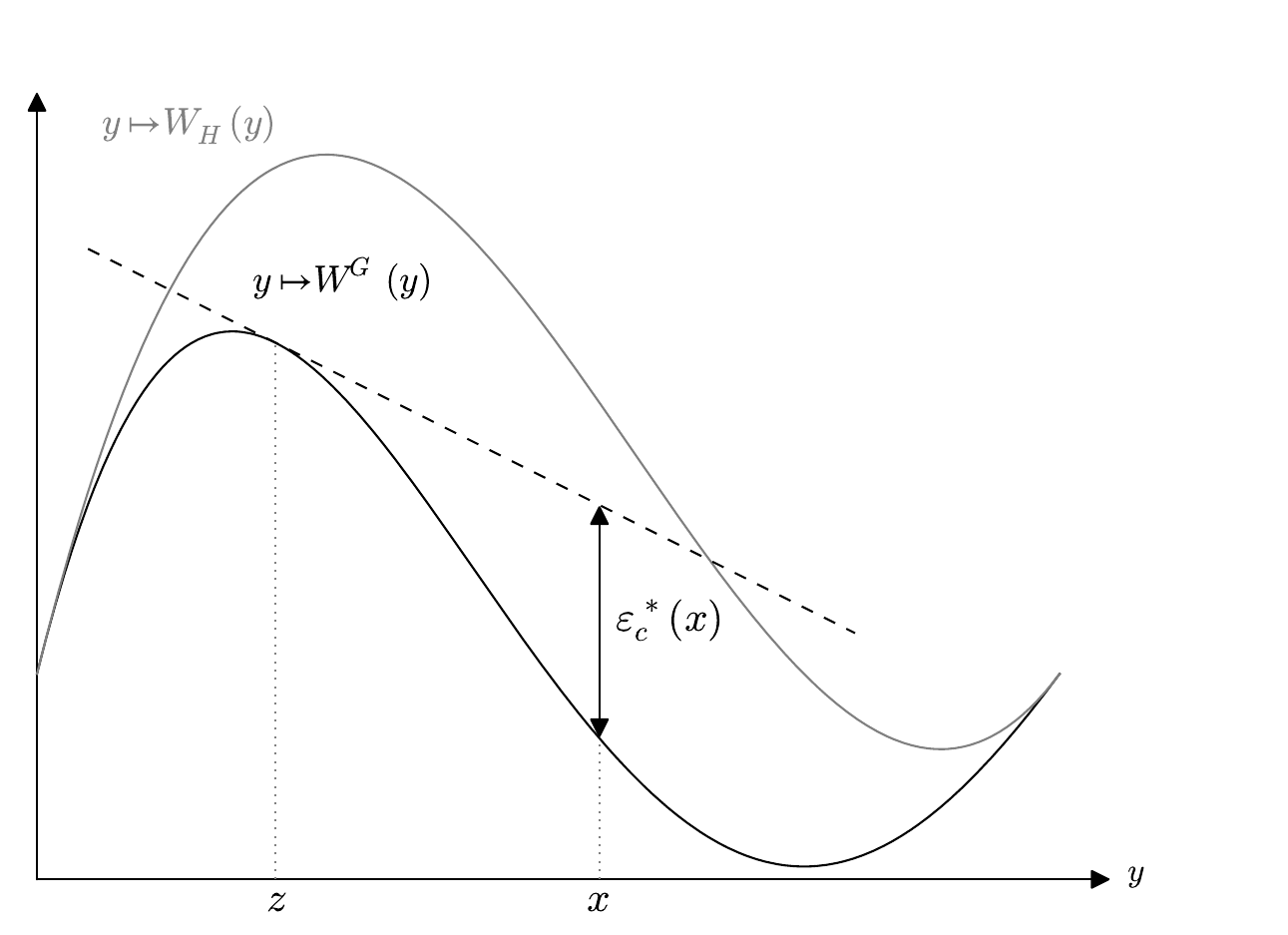} \\
\includegraphics[width=7cm]{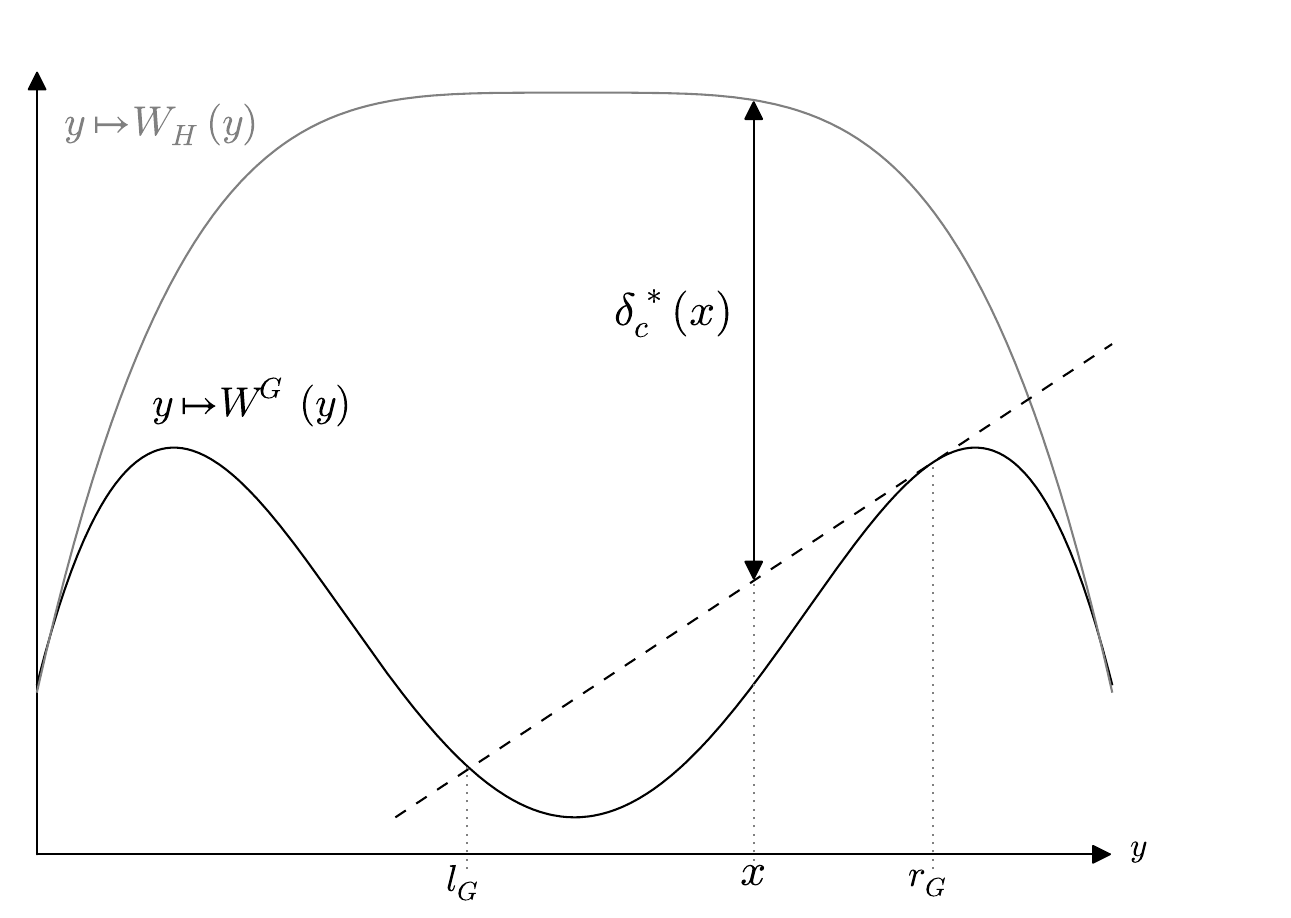} & \includegraphics[width=7cm]{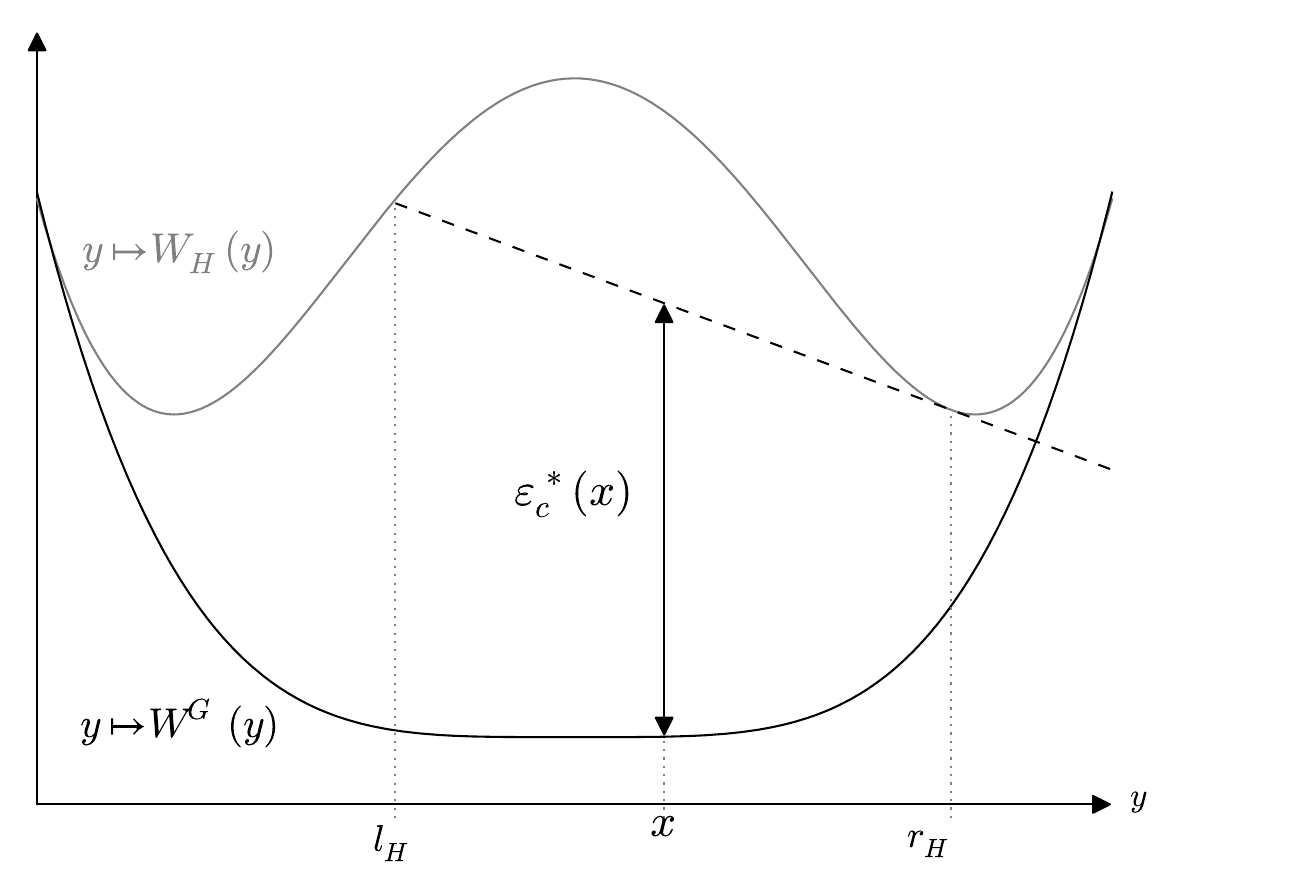}
\end{tabular} 

\begin{minipage}{0.7\textwidth}
\caption[Properties of $\delta_c^{\ast}(x)$ and $\varepsilon_c^{\ast}(x)$]{The two figures on the left illustrate $\delta_c^{\ast}(x)$ and the two figures on the right illustrate $\varepsilon_c^{\ast}(x)$. For ease of notation, in the bottom left figure $l_G:=l^x_G(c,W_H(x)-\delta^{\ast}_c(x))$ and $r_G:=r^x_G(c,W_H(x)-\delta^{\ast}_c(x))$ and in the bottom right figure $l_H:=l^x_H(c,W^G(x)+\varepsilon^{\ast}_c(x))$ and $r_H:=r^x_H(c,W^G(x)+\varepsilon^{\ast}_c(x))$.
\label{Figure important properties}}
\end{minipage}
\end{center}

\end{figure}

Furthermore, let 
\begin{equation}
\varepsilon^{\ast}(x):= \inf_{c\in \mathbb{R}}\varepsilon_{c}^{\ast}(x) \qquad , \qquad
\delta^{\ast}(x):= \inf_{c\in \mathbb{R}}\delta_{c}^{\ast}(x) \label{epsilon^ast}
\end{equation}
which are the smallest spike variation that can be made in $W^G$ (resp. $W_H$) at $x$ 
such that the set $\partial^{H}_{\varepsilon}G(x)$ (resp. $\partial^{G}_{\delta}H(x)$)
is non-empty. The first result in this section shows that there is `no gap' between 
the minimum spike variation in $W_H$ downwards admitting a $\delta$-supergradient 
and the minimum spike variation in $W^G$ upwards admitting a $\varepsilon$-subgradient.

\begin{lemma}
\label{lemma no gap}
Suppose that $W^G: (0,\infty) \rightarrow \mathbb{R}$ and $W_H: (0,\infty) \rightarrow \mathbb{R}$ are as defined in (\ref{W^G function}) and (\ref{W_H function}) and that (\ref{stuck together}) holds, then: for all $x> 0$ and $c\in \mathbb{R}$
\begin{equation}
W^G(x)+\varepsilon^{\ast}_c(x) \geq  W_H(x)-\delta^{\ast}_c(x) \label{pre no gap} 
\end{equation}
and 
\begin{equation}
W^G(x)+\varepsilon^{\ast}(x) = W_H(x)-\delta^{\ast}(x) . \label{no gap} 
\end{equation}
\end{lemma}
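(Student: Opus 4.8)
The plan is to recast both inequalities as statements about single lines and then optimise over the slope. Fix $x\ge 0$, set $\Delta(x):=W_H(x)-W^G(x)$, and for a slope $c$ and height $m$ write $\ell_{c,m}(y)=m+c(y-x)$ for the affine function through $(x,m)$. Reading (\ref{epsilon-super}) and (\ref{delta-sub}) off the picture, raising a line of fixed slope shrinks the interval between its crossings with $W_H$ while pushing it further above $W^G$; hence the heights $m$ for which $\ell_{c,m}$ is admissible in the sense of (\ref{epsilon-super}) (a \emph{super-line}) form an up-set, and those for which it is admissible in the sense of (\ref{delta-sub}) (a \emph{sub-line}) form a down-set. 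Thus $\varepsilon^*_c(x)$ and $\delta^*_c(x)$ are genuine thresholds: writing $M^{\mathrm{sup}}_c:=W^G(x)+\varepsilon^*_c(x)$ and $M^{\mathrm{sub}}_c:=W_H(x)-\delta^*_c(x)$, a slope-$c$ line is a super-line iff its height is $\ge M^{\mathrm{sup}}_c$ and a sub-line iff its height is $\le M^{\mathrm{sub}}_c$. In this language (\ref{pre no gap}) reads $M^{\mathrm{sup}}_c\ge M^{\mathrm{sub}}_c$ for every $c$, and (\ref{no gap}) reads $\inf_c M^{\mathrm{sup}}_c=\sup_c M^{\mathrm{sub}}_c$.

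The device I would use throughout is an \emph{exit type}. For $W^G(x)<m<W_H(x)$ the line $\ell_{c,m}$ starts strictly inside the strip bounded by $W^G$ and $W_H$; by (\ref{stuck together}) it cannot remain strictly inside indefinitely on either side (a line of nonzero slope must escape because $W_H-W^G\to 0$ at $0$ and $\infty$), so on each side it either leaves through $W_H$, leaves through $W^G$, or meets a point where $W^G=W_H$ (an interior contact or a pinched boundary). Record this as $s_+,s_-\in\{+1,-1,0\}$. A direct check against (\ref{epsilon-super})--(\ref{delta-sub}) gives the dichotomy that $\ell_{c,m}$ is a super-line iff $s_\pm\neq-1$, a sub-line iff $s_\pm\neq+1$, and therefore \emph{both} iff $s_+=s_-=0$, i.e. iff the line threads between two contact points of the two obstacles. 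To prove (\ref{pre no gap}) I would then argue that for fixed $c$ the set of heights giving a line that is simultaneously a super- and a sub-line is at most a single point. By the monotonicity above this set is the interval $[M^{\mathrm{sup}}_c,M^{\mathrm{sub}}_c]$, while by the dichotomy it consists only of threading heights; but two distinct parallel threading lines cannot coexist, since the higher one lies strictly above $W_H$ at the lower one's right contact point and has therefore already left the strip. The interval is thus degenerate, giving $M^{\mathrm{sup}}_c\ge M^{\mathrm{sub}}_c$.

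For (\ref{no gap}) I would prove the two inequalities separately. For $\inf_c M^{\mathrm{sup}}_c\ge\sup_c M^{\mathrm{sub}}_c$, suppose not and choose $m$ strictly between the two values; then $m$ is automatically strictly inside the strip, and by the threshold property some slope $c_1$ makes $\ell_{c_1,m}$ a super-line while some $c_2$ makes $\ell_{c_2,m}$ a sub-line. If $c_1\neq c_2$ the two lines separate on one side of $x$; comparing the first crossing of $\ell_{c_1,m}$ with $W_H$ and of $\ell_{c_2,m}$ with $W^G$ on that side forces these crossings to coincide at a point $\rho$ with $W^G(\rho)>W_H(\rho)$, contradicting $W^G\le W_H$, while $c_1=c_2$ contradicts (\ref{pre no gap}).

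For the reverse inequality, suppose $\sup_c M^{\mathrm{sub}}_c<\inf_c M^{\mathrm{sup}}_c$ and pick $m$ strictly between; now \emph{every} line $\ell_{c,m}$ is neither a super- nor a sub-line, so the dichotomy forces $(s_+(c),s_-(c))\in\{(+1,-1),(-1,+1)\}$ for all $c$. Since the type is $(+1,-1)$ for $c\to+\infty$ and $(-1,+1)$ for $c\to-\infty$, an intermediate-value argument in $c$ yields a slope $c^*$ with $s_+(c^*)=0$, i.e. a threading and hence admissible line, the required contradiction. I expect this last step to be the main obstacle: it rests on the sets $\{c:s_+(c)=+1\}$ and $\{c:s_+(c)=-1\}$ being open, which must be justified carefully in the presence of tangential contacts and the upper semicontinuity of $W^G,W_H$, and on controlling the threading limit at the pinched boundaries $0$ and $\infty$. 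The contact alternatives catalogued in Remark \ref{Remark properties of delta-star} are exactly what I would use to organise these cases and to locate the common contact tangent that simultaneously realises $\varepsilon^*(x)$ and $\delta^*(x)$.
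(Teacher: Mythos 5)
Your reformulation in terms of the threshold heights $M^{\mathrm{sup}}_c=W^G(x)+\varepsilon^{\ast}_c(x)$ and $M^{\mathrm{sub}}_c=W_H(x)-\delta^{\ast}_c(x)$ is a genuinely different organisation from the paper's. The up-set/down-set monotonicity, the exit-type dichotomy, and the observation that two distinct parallel threading lines cannot coexist do yield (\ref{pre no gap}); the paper instead proves it by a perturbation argument (for $\delta>\delta^{\ast}_c(x)$ the shifted line must pass strictly below $W^G$ before meeting $W_H$, whence $W_H(x)-\delta<W^G(x)+\varepsilon^{\ast}_c(x)$, and one lets $\delta\downarrow\delta^{\ast}_c(x)$). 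Your cross-slope step, showing $\inf_c M^{\mathrm{sup}}_c\geq\sup_c M^{\mathrm{sub}}_c$, is also needed (it does not follow from (\ref{pre no gap}) alone) and your comparison of first crossings is sound in spirit, though the contradiction lives on the side where the super-line lies \emph{below} the sub-line, and the case where the two crossings do not coincide must be disposed of separately (it forces $c_1=c_2$ rather than $W^G(\rho)>W_H(\rho)$); you also need (\ref{stuck together}) to exclude lines that never leave the strip near $0$ or $+\infty$, which you do flag.

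The genuine gap is exactly where you place it: the reverse inequality $\inf_c M^{\mathrm{sup}}_c\leq\sup_c M^{\mathrm{sub}}_c$, i.e.\ the existence of a (near-)threading slope. Your intermediate-value argument in $c$ requires $\{c:s_+(c)=+1\}$ and $\{c:s_+(c)=-1\}$ to be open, and they are not: a line of type $(+1,-1)$ may graze $W^G$ tangentially before its $W_H$-exit, so an arbitrarily small decrease of the slope flips the type to $(-1,+1)$ with no intermediate slope producing a threading line, and no contradiction is reached at the putative transition slope. Since this step carries essentially all of the substantive content of (\ref{no gap}) --- it amounts to producing a $c$ at which equality holds in (\ref{pre no gap}) and which realises both infima in (\ref{epsilon^ast}) --- the proof cannot be considered complete without it. For comparison, the paper's own argument is phrased conditionally (``suppose that for some $c$ equality holds in (\ref{pre no gap})'') and then shows via the shifted competitor lines and conditions (\ref{condition(i)})--(\ref{condition(ii)}) that such a $c$ realises both $\varepsilon^{\ast}(x)$ and $\delta^{\ast}(x)$; the existence of that $c$ rests on the contact alternatives catalogued in Remark \ref{Remark properties of delta-star}. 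To close your route you would need to replace the naive intermediate-value argument by an extremal-slope argument --- for instance take $c^{\ast}=\inf\{c:s_+(c)=+1\}$ and analyse the contact structure of the limiting line directly, using the continuity of $G$ and $H$ and the pinching (\ref{stuck together}) --- which is in effect the case analysis the paper performs.
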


\begin{proof}
Fix $x> 0$, $c\in \mathbb{R}$ and take $\delta > \delta_c^{\ast}(x)$. It follows from 
the definition of $\delta_c^{\ast}(x)$ that 
\begin{equation*}
W_H(y)> W_H(x)- \delta +c(y-x) \qquad \forall\,y\in [l_G^x(c,W_H(x)-\delta ),r_G^x(c, W_H(x)-\delta)]
\end{equation*}
and 
\begin{equation}
[l_G^x(c,W_H(x)-\delta ),r_G^x(c,W_H(x)-\delta )] \subset [l_H^x(c,W_H(x)-\delta ),r_H^x(c,W_H(x)-\delta )] 
\label{inclusion}
\end{equation}
where the inequality and inclusion are strict to avoid contradicting that $\delta>\delta_c^{\ast}(x)$.  
Due to the properties of the line $y\mapsto W_H(x)-\delta_c^{\ast}(x)+c(y-x)$ described in Remark \ref{Remark 
properties of delta-star}, it follows from the inclusion (\ref{inclusion}) and the continuity of $G$ that 
\begin{equation}
\exists\,y \in [l_H^x(c,W_H(x)-\delta ),r_H^x(c,W_H(x)-\delta )] \quad \mathrm{s.t.} \quad
W_H(x)-\delta + c(y-x) < W^G(y) \label{pass below}
\end{equation}
Let $\varepsilon^{\prime}:= W_H(x)-W^G(x)-\delta$ so that (\ref{pass below}) is equivalent to
\begin{equation*}
\exists\,y \in [l_H^x(c,W_H(x)-\delta ),r_H^x(c,W_H(x)-\delta )] \quad \mathrm{s.t.} \quad 
W^G(x)-\varepsilon^{\prime} + c(y-x) < W^G(y) .
\end{equation*}
It follows from the definition of $\varepsilon_c^{\ast}(x)$ 
that $\varepsilon^{\prime}(x) <\varepsilon_c^{\ast}(x)$ or equivalently 
\begin{equation}
W_H(x)-\delta <W^G(x)+\varepsilon^{\ast}_c(x) \quad \forall\,\delta>\delta_c^{\ast}(x) . \label{pass ineq}
\end{equation} 
Take any sequence of real numbers $(\delta_n)_{n\geq 1}$ such that $\delta_n \in (\delta_c^{\ast}(x),W_H(x)-W^G(x))$ for all $n\geq 1$ 
and $\lim_{n \rightarrow \infty}\delta_n = \delta_c^{\ast}(x)$ then (\ref{pass ineq}) shows that 
$W_H(x)-\delta_n < W^G(x)+ \varepsilon_c^{\ast}(x)$ for all $n\geq 1$. Taking the limit as 
$n \rightarrow \infty$ of both sides of this inequality we obtain (\ref{pre no gap}).

Suppose that for some $c\in \mathbb{R}$ that $W^G(x)+\varepsilon^{\ast}_c(x) = W_H(x)-\delta^{\ast}_c(x)$. 
When the assumption (\ref{stuck together}) holds the line $g(y):=W^G(x)+\varepsilon^{\ast}_c(x) 
+ c(y-x)$ has both sets of properties described in Remark \ref{Remark properties of delta-star} so is 
the only line passing through $(x,p)$ for some $p\in (W^G(x),W_H(x))$ which both dominates $y\mapsto W^G(y)$ on 
$[l_H^x(c,p),r_H^x(c,p)]$ and minorises $y\mapsto W_H(y)$ on $[l_G^x(c,p),r_G^x(c,p)]$. 
When (\ref{stuck together}) fails, $y\mapsto g(y)$ may not be the only line with these properties and 
(\ref{no gap}) may not hold, this case is examined further in Remark 
\ref{Remark problem with gap} below. Consequently, it follows from the continuity of $G$ and $H$ and 
the assumption (\ref{stuck together}) that for any $c^{\prime} \neq c$ the line 
$y\mapsto h(y)=W^G(x)+\varepsilon_c^{\ast}(x)+c^{\prime}(y-x)$ must either satisfy both
\begin{align}
l_G^x(c^{\prime},W_H(x)-\delta_c^{\ast}(x)) &< l_H^x(c^{\prime},W_H(x)-\delta_c^{\ast}(x)), \notag \\
r_G^x(c^{\prime},W_H(x)-\delta_c^{\ast}(x)) &< r_H^x(c^{\prime},W_H(x)-\delta_c^{\ast}(x)), \label{condition(ii)}
\end{align}
or 
\begin{align}
l_G^x(c^{\prime},W_H(x)-\delta_c^{\ast}(x)) &> l_H^x(c^{\prime},W_H(x)-\delta_c^{\ast}(x)) , \notag \\
r_G^x(c^{\prime},W_H(x)-\delta_c^{\ast}(x)) &> r_H^x(c^{\prime},W_H(x)-\delta_c^{\ast}(x)) \label{condition(i)}.
\end{align}

\begin{figure}[h!]
\begin{center}

\begin{tabular}{cc}
\includegraphics[width=7cm]{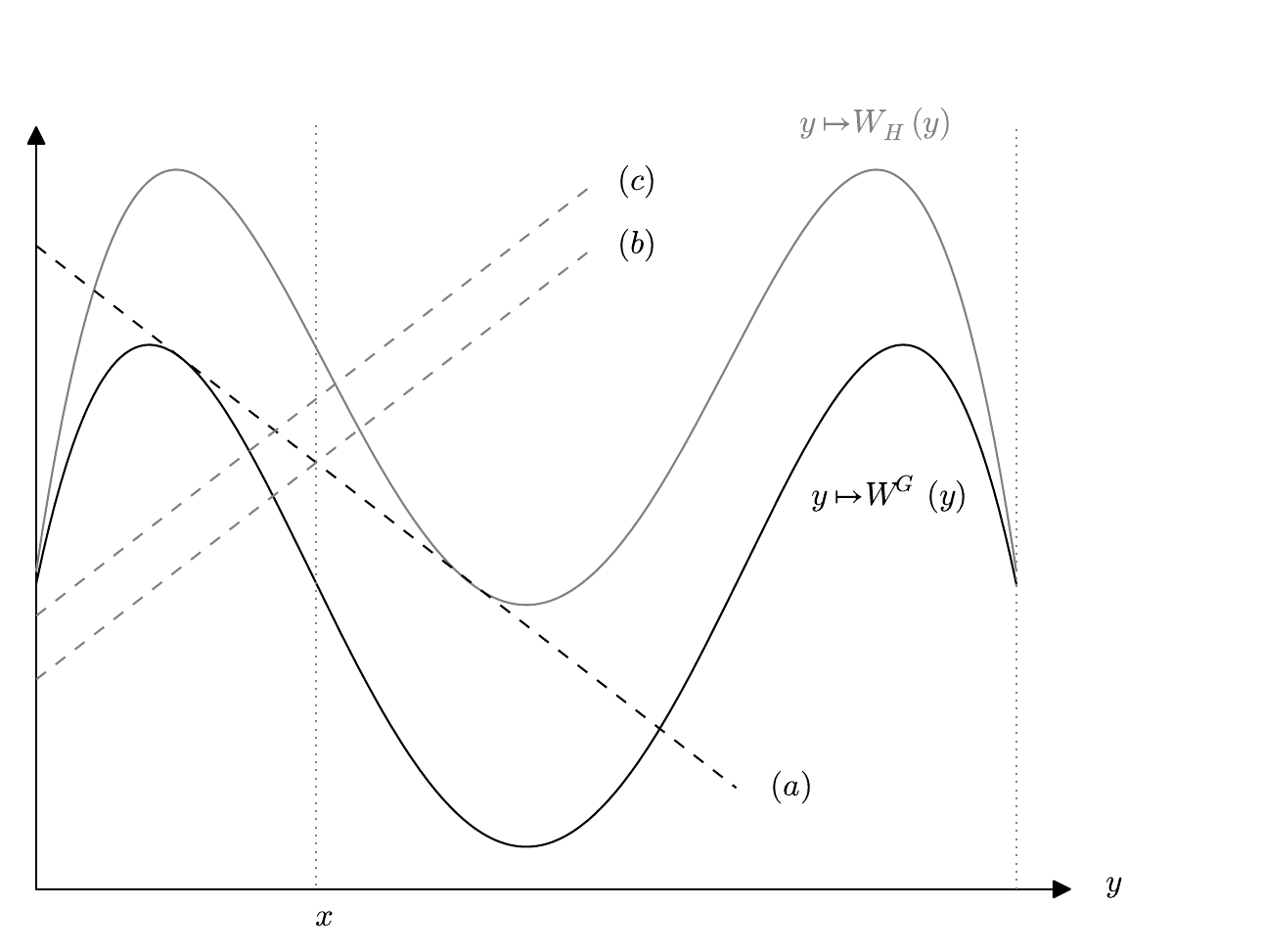}  & \includegraphics[width=7cm]{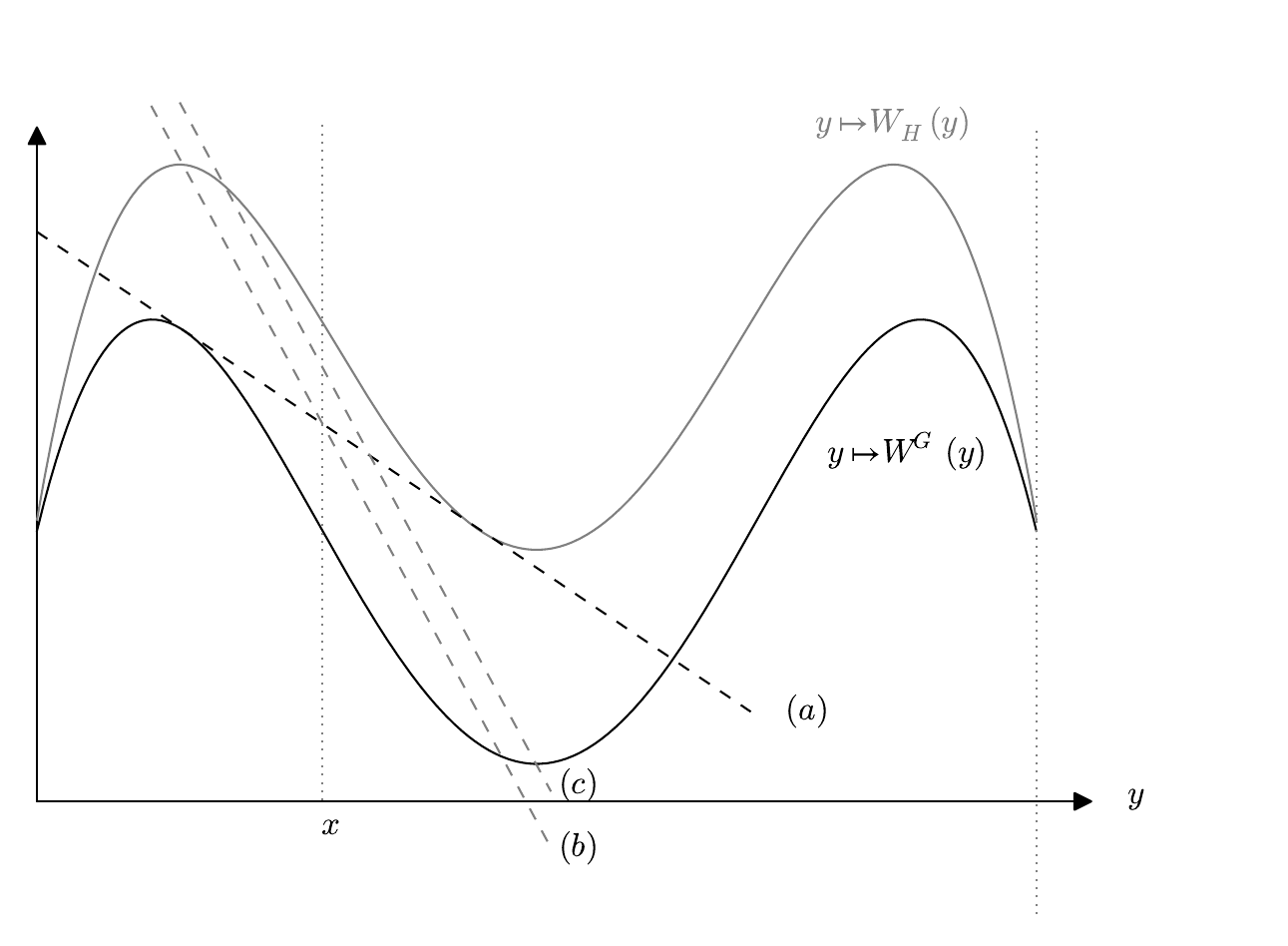}
\end{tabular} 

\begin{minipage}{0.7\textwidth}
\caption[Properties of $\delta_c^{\ast}(x)$ when $W^G(x)+\varepsilon_c^{\ast}(x)=W_H(x)-\delta_c^{\ast}(x)$]{These 
figures illustrate the properties used in the proof of Lemma \ref{lemma no gap}.
\label{Figure rotate}}
\end{minipage}
\end{center}

\end{figure}

In the right panel of Figure \ref{Figure rotate} the line marked (b) has the properties (\ref{condition(ii)}) whereas in the left panel the line marked (b) has the properties (\ref{condition(i)}). When (\ref{stuck together}) fails one (or more) of the inequalities in both of these statements can hold with equality which invalidates the following 
argument. Next the line $y\mapsto h(y)$ is shifted upwards so that it passes through $(x,W_H(x)-\delta)$ for 
some $\delta \in[0,\delta_c^{\ast}(x)]$. As is illustrated by the lines marked (c) in Figure \ref{Figure rotate}
the properties (\ref{condition(i)}) imply that
\begin{equation*}
l_H^x(c^{\prime},H(x)-\delta) \in [l_H^x(c^{\prime},H(x)-\delta_{c^{\prime}}(x)),x]
\end{equation*}
and hence $l_H^x(c^{\prime},H(x)-\delta)>l_G^x(c^{\prime},H(x)-\delta)$. Whereas if (\ref{condition(ii)}) holds then
\begin{equation*}
r_H^x(c^{\prime},H(x)-\delta) \in [x,r_H^x(c^{\prime},H(x)-\delta_{c^{\prime}}(x))] .
\end{equation*} 
which implies that $r_H^x(c^{\prime},H(x)-\delta)<r_G^x(c^{\prime},H(x)-\delta)$. In both cases $c^{\prime} 
\notin \partial_{\delta}^G H(x)$ for arbitrary $\delta < \delta_c^{\ast}(c)$. Moreover,
as $c^{\prime}$ was arbitrary we have shown that $\partial_{\delta}^G H(x)=\emptyset$ for all 
$\delta < \delta_c^{\ast}(c)$. We may conclude that when for some $c\in \mathbb{R}$ equality holds in 
(\ref{pre no gap}) that $\delta_c^{\ast}(x)= \delta^{\ast}(x)$. A symmetric argument can be used to show that 
when equality holds for some $c\in \mathbb{R}$ equality holds in (\ref{pre no gap}) that $\varepsilon_c^{\ast}(x)= 
\varepsilon^{\ast}(x)$ from which we deduce (\ref{no gap}) holds. 
\end{proof}

\begin{figure}[h!]
\begin{center}
\includegraphics[width=7cm]{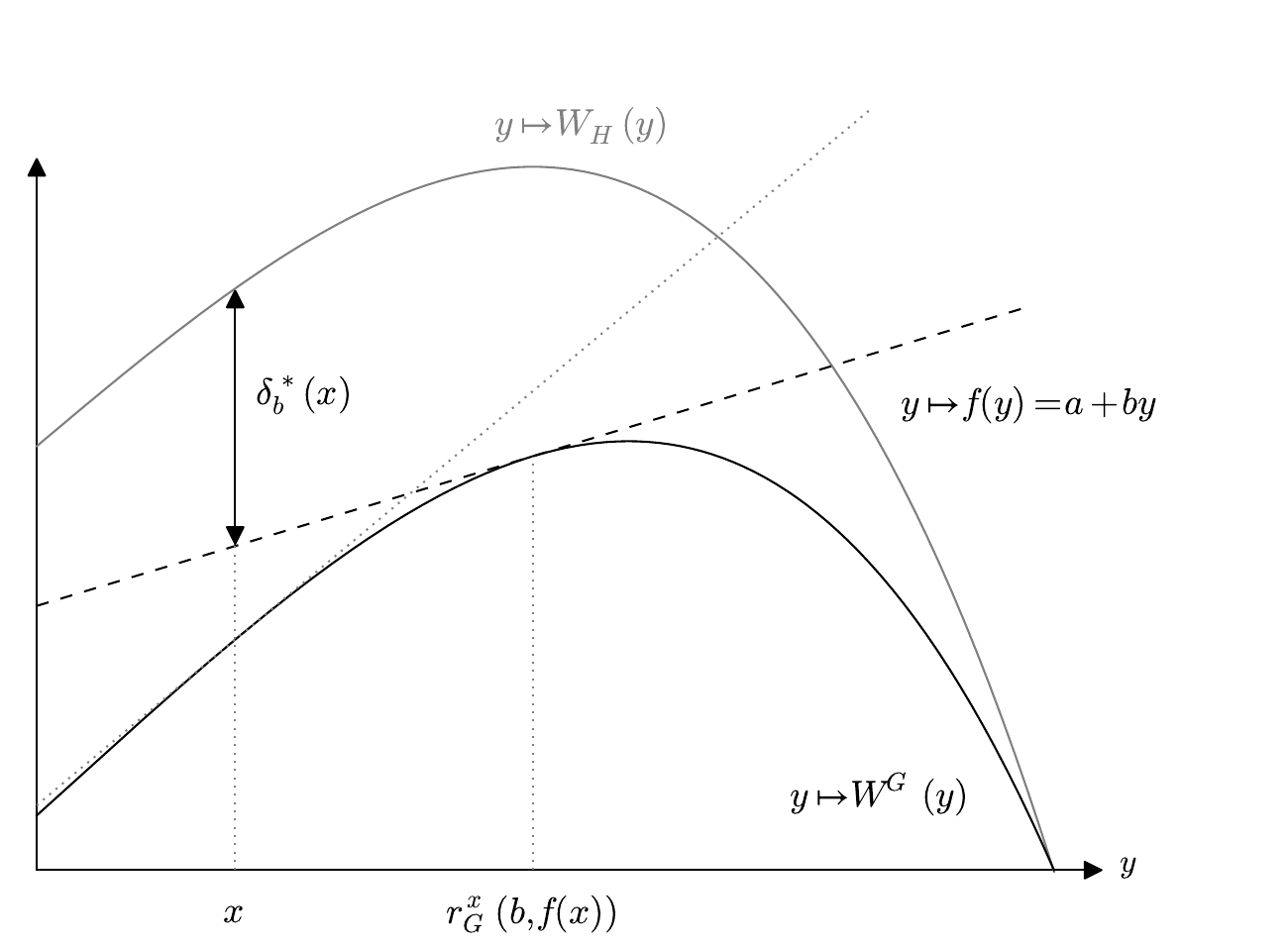}
\begin{minipage}{0.7\textwidth}
\caption[An example where $W_H(0+)>W^G(0+)$]{When $W_H(0+)>W^G(0+)$ it may be the case that condition (\ref{no gap})
in Lemma \ref{lemma no gap} fails.
\label{Figure problem with boundaries}}
\end{minipage}
\end{center}

\end{figure} 

\begin{remark}
\label{Remark problem with gap}
In the previous lemma it is essential to assume that (\ref{stuck together}) holds. When this is not the case 
it may be the case that $W^G(x)+\varepsilon^{\ast}(x)<W_H(x)+\delta^{\ast}(x)$ as is illustrated in Figure \ref{Figure problem with boundaries}. Suppose that as in the illustration $W^G(0+)<W_H(0+)$ and 
there is a $x>0$ such that for some $c^{\prime}\in \mathbb{R}$, $c^{\prime} \in \partial^H G(x)$, $l_G^x(c^{\prime},W^G(x))=l_H^x(c^{\prime},W^G(x))=0$ and 
$W^G(0+)<W^G(x)-c'x<W_H(0+)$. Take $a \in (W^G(x)-c^{\prime}x,W_H(0+))$ such that $\exists b<c^{\prime}$ with 
the properties: (i) $a+bx \in (W^G(x),W_H(x))$, (ii) $r_G^x(b,a+bx)<r_H^x(b,a+bx)$ and (iii) 
$b \in \partial^HG(r_G^x(b,a+bx))$. We have chosen $a$, $b$ and $x$ such that $\delta^{\ast}_b(x)=W_H(x)-(a+bx)$ hence $\delta^{\ast}(x)\leq W_H(x)-(a+bx) <
W_H(x)-W^{G}(x)$. However, by construction $\partial^H G(x)\neq \emptyset$ so $\varepsilon^{\ast}(x)=0$ which 
implies
\begin{equation*}
W^G(x)+ \varepsilon^{\ast}(x)< W_H(x)-\delta^{\ast}(x),
\end{equation*}
so in this case (\ref{no gap}) fails. 
\end{remark}

\begin{remark}
\label{Remark duality}
The assumption that (\ref{stuck together}) holds in Lemma \ref{lemma no gap} can be relaxed in the following way. When $W^G(0+)<W_H(0+)$, take any $w_0 \in [W^G(0+),W_H(0+)]$ and extend $W^G$ (respectively $W_H$) onto $[0,\infty)$ by allowing $W^G$ to be multivalued at zero taking all values $[W^G(0+),w_0]$ (respectively $[w_0,W_H(0+)]$). If we use this extension of $W^G$ and $W_H$ and the intervals in the definitions of $\partial^G_{\varepsilon}H(x)$ and $\partial_H^{\delta}G(x)$ are take to be closed (as apposed to open) then (\ref{no gap}) in Lemma \ref{lemma no gap} holds without the need to assume (\ref{stuck together}) as the problem discussed in Remark \ref{Remark problem with gap} can not occur. However, the choice of $w_0$ not only completely determines $\delta^{\ast}(0+)$ and $\varepsilon^{\ast}(0+)$ but the values of the functions $x \mapsto \delta^{\ast}(x)$ and $x \mapsto \varepsilon^{\ast}(x)$ on $(0,z)$ for some $z\geq 0$.
\end{remark}

\begin{figure}[h!] 
\begin{center}
\includegraphics[width=7cm]{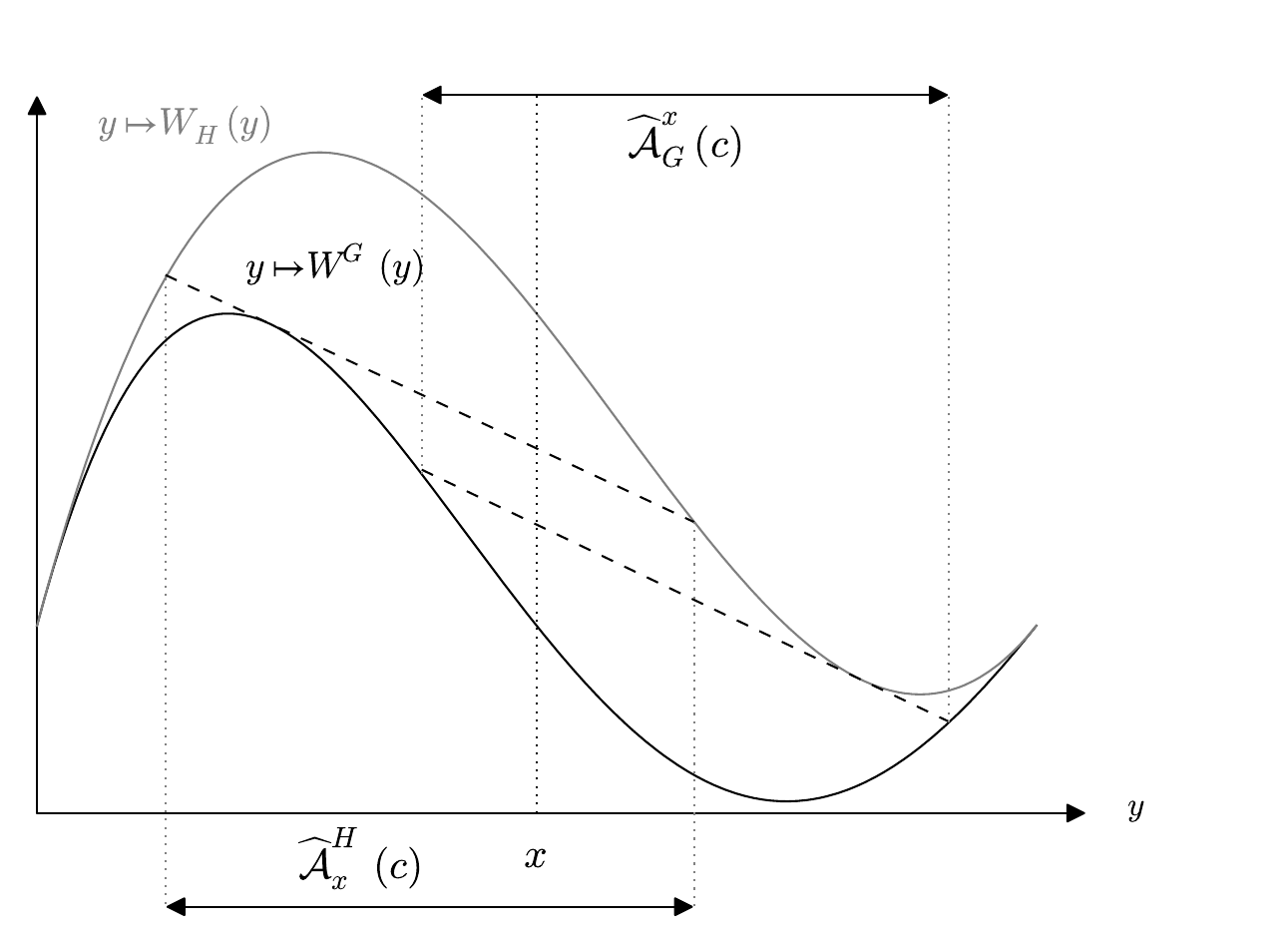}
\begin{minipage}{0.7\textwidth}
\caption[Definition of admissible neighborhoods $\widehat{\mathcal{A}}_{G}^{x}(c)$ and 
$\widehat{\mathcal{A}}_{x}^{H}(c)$]{The sets $\widehat{\mathcal{A}}_{G}^{x}(c)$ and 
$\widehat{\mathcal{A}}_{x}^{H}(c)$ are defined in (\ref{pre-strategy space G}) and (\ref{pre-strategy space H}) are marked when $c$ is the slope of the pair of parallel lines.
\label{Figure neighborhoods 1}}
\end{minipage}
\end{center}
\end{figure}

For a given $x> 0$, define two sets of admissible neighborhoods using
\begin{eqnarray}
\widehat{\mathcal{A}}_{G}^{x}(c) &:=&\bigcup\limits_{p\in [
W^G(x),W_H(x)] }\{ y> 0\,\vert
\,l_{H}^{x}(c,p) \leq l_{G}^{x}(c,p) \leq y\leq
r_{G}^{x}(c,p) \leq r_{H}^{x}(c,p)\} , \label{pre-strategy space G} \\
\widehat{\mathcal{A}}_{x}^{H}(c) &:=&\bigcup\limits_{p\in [
W^G(x),W_H(x)]}\{ y> 0\,\vert\,l_{G}^{x}(c,p) \leq l_{H}^{x}(c,p) \leq y\leq
r_{H}^{x}(c,p) \leq r_{G}^{x}(c,p)\} . \label{pre-strategy space H}
\end{eqnarray}
These admissible neighborhoods are illustrated in Figure \ref{Figure neighborhoods 1}. The next result 
describes a pair of functions which coincide with $y\mapsto W^G(y)+\varepsilon^{\ast}(y)$ and 
$y\mapsto W_H(y)-\delta^{\ast}(y)$.

\begin{proposition}
\label{proposition supsup}
For $x> 0$, define a pair of functions
\begin{eqnarray}
(W^G)_H^{\ast \ast}(x) &:=& \sup_{z\in \mathbb{R}}\sup_{y\in 
\widehat{\mathcal{A}}^{x}_{G}(z)}\left(z(x-y) +W^G(y)\right) ,  \label{pre-concave biconjugate H} \\
(W_H)_{\ast\ast}^G(x) &:=& \inf_{z\in \mathbb{R}}\inf_{y\in \widehat{\mathcal{A}}^{H}_{x}(z)}
\left( z(x-y) +W_H(y)\right) ,  \label{pre-convex biconjugate G}
\end{eqnarray}
then $(W^G)_H^{\ast \ast}(x)=W_H(x)-\delta^{\ast}(x)$ and $(W_H)^G_{\ast \ast}(x)=
W^G(x)+\varepsilon^{\ast}(x)$.
\end{proposition}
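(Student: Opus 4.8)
The plan is to establish the first identity $(W^G)_H^{\ast\ast}(x)=W_H(x)-\delta^{\ast}(x)$ in detail and then obtain the second by reflecting the argument, interchanging the roles of $W^G$ and $W_H$, supremum and infimum, and the admissible neighbourhoods (\ref{pre-strategy space G}) and (\ref{pre-strategy space H}). The key observation, entirely analogous to the computation in Corollary \ref{corollary construction}, is that the inner supremum in (\ref{pre-concave biconjugate H}) collapses to a supremum over admissible \emph{heights}. Fix a slope $z$ and an admissible height $p\in[W^G(x),W_H(x)]$ and consider the line $\ell(y)=p+z(y-x)$. Its last interception with $W^G$ before $x$ is $l^x_G(z,p)$ and its first after $x$ is $r^x_G(z,p)$, so there is no interception on $(l^x_G,r^x_G)$ and, since $G$ and hence $W^G$ is continuous, $W^G-\ell$ has constant sign there. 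As $\ell(x)=p\geq W^G(x)$ this sign is nonpositive, so $z(x-y)+W^G(y)\leq p$ on $[l^x_G,r^x_G]$ with equality at the two endpoints. Taking the union over $p$ in (\ref{pre-strategy space G}) therefore gives, for each fixed $z$,
\[
\sup_{y \in \widehat{\mathcal{A}}_G^x(z)}\bigl(z(x-y) + W^G(y)\bigr)
= \sup\bigl\{\, p \in [W^G(x), W_H(x)] \;:\; l^x_H(z,p) \leq l^x_G(z,p) \leq x \leq r^x_G(z,p) \leq r^x_H(z,p) \,\bigr\}.
\]

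The second step identifies this largest admissible height with $W_H(x)-\delta_z^{\ast}(x)$. Writing $\delta=W_H(x)-p$, the admissibility constraints $l^x_H\leq l^x_G$ and $r^x_G\leq r^x_H$ assert exactly that $\ell$ minorises $W_H$ on $[l^x_G(z,p),r^x_G(z,p)]$, which is precisely the membership $z\in\partial^{G}_{\delta}H(x)$ from (\ref{delta-sub}). Lowering $\delta$ towards $\delta_z^{\ast}(x)$ keeps the line admissible, whereas for $\delta$ strictly below $\delta_z^{\ast}(x)$ the line pokes above $W_H$ inside the chord and admissibility fails; this is the dichotomy recorded in Remark \ref{Remark properties of delta-star}. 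Hence the largest admissible height is $W_H(x)-\delta_z^{\ast}(x)$, and taking the supremum over $z$ together with $\delta^{\ast}(x)=\inf_z\delta_z^{\ast}(x)$ from (\ref{epsilon^ast}) yields
\[
(W^G)_H^{\ast\ast}(x) = \sup_{z \in \mathbb{R}}\bigl(W_H(x) - \delta_z^{\ast}(x)\bigr) = W_H(x) - \inf_{z \in \mathbb{R}} \delta_z^{\ast}(x) = W_H(x) - \delta^{\ast}(x).
\]

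The step I expect to be the main obstacle is the degenerate configuration in which the line of slope $z$ fails to meet $W^G$ on one side, so that $l^x_G$ or $r^x_G$ escapes to the natural boundary $0$ or $+\infty$; there the constant-sign argument of the first step has no interior contact point to anchor it and the supremum over $p$ need not be attained. This is exactly where the boundary hypothesis (\ref{stuck together}) is needed: because $W_H-W^G\to 0$ at both ends, condition (b) of Remark \ref{Remark properties of delta-star} takes over and pins the extremal height to $W_H(x)-\delta_z^{\ast}(x)$ just as in the interior case. Remark \ref{Remark problem with gap} confirms that this is the genuinely delicate point, since dropping (\ref{stuck together}) is precisely what can break the identity.

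Finally, the second identity $(W_H)^G_{\ast\ast}(x)=W^G(x)+\varepsilon^{\ast}(x)$ follows verbatim after reflection. For fixed $z$ the chord through $(x,p)$ now lies \emph{below} $W_H$ on $[l^x_H,r^x_H]$ because $p\leq W_H(x)$, so the inner infimum in (\ref{pre-convex biconjugate G}) equals $p$; the admissibility constraint in (\ref{pre-strategy space H}) becomes $z\in\partial^{H}_{\varepsilon}G(x)$ with $\varepsilon=p-W^G(x)$; the smallest admissible height is therefore $W^G(x)+\varepsilon_z^{\ast}(x)$; and minimising over $z$ with $\varepsilon^{\ast}(x)=\inf_z\varepsilon_z^{\ast}(x)$ gives the claim. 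Together with Lemma \ref{lemma no gap} these two identities are what underlie the duality asserted in Theorem \ref{Theorem duality}.
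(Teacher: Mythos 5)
Your proof is correct and follows essentially the same route as the paper: for each fixed slope the inner extremum over $\widehat{\mathcal{A}}^{x}_{G}(z)$ collapses to the extremal admissible height $W_H(x)-\delta_z^{\ast}(x)$, because the line at an admissible height touches $W^G$ at the endpoints $l_G^x$, $r_G^x$ and dominates it in between, and the outer extremum over slopes then yields $W_H(x)-\delta^{\ast}(x)$ (with the mirror-image argument for $(W_H)^{G}_{\ast\ast}$). The one minor divergence is your appeal to (\ref{stuck together}) for the degenerate boundary configuration: the paper explicitly remarks that this proposition holds without that assumption, which is only needed for the `no gap' identity of Lemma \ref{lemma no gap}, so invoking it here slightly overstates the hypotheses but does not affect validity under the section's standing assumptions.
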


\begin{proof}
Fix a $x> 0$ and $c\in \mathbb{R}$, it follows from the definition of $\delta_c^{\ast}(x)$ 
and the continuity of $G$ that the set (\ref{pre-strategy space G}) satisfies 
\begin{equation*}
\widehat{\mathcal{A}}_{G}^{x}(c) = [l_G^x(c,W_H(x)-\delta_c^{\ast}(x)),r_G^x(c,W_H(x)-\delta_c^{\ast}(x))] .
\end{equation*}
By definition $c \in \partial_{\delta}^G H(x)$ for all $\delta> \delta_c^{\ast}(x)$, which is equivalent to
\begin{equation}
W_H(x) - \delta_c^{\ast}(x)+ c(y-x) \geq W^G(y) \quad \forall\,y\in \widehat{\mathcal{A}}_G^x(c) \label{step sup1}
\end{equation}
and equality holds in (\ref{step sup1}) for $y=l_G^x(c,W_H(x)-\delta_c^{\ast}(x))$ and 
$y=r_G^x(c,W_H(x)-\delta_c^{\ast}(x))$. Hence,
\begin{equation*}
\sup_{y\in \widehat{\mathcal{A}}_{G}^{x}(c)} (W^G(y)-cy) = W^G(r_G^x(c,W_H(x)-\delta_c^{\ast}(x)))
-c r_G^x(c,W_H(x)-\delta_c^{\ast}(x))
\end{equation*}
and 
\begin{align*}
(W^G)_H^{\ast \ast}(x) &:= \sup_{c\in \mathbb{R}}\sup_{y\in \widehat{\mathcal{A}}^{x}_{G}(c)}
\left(c(x-y) +W^G(y)\right) \\
&=\sup_{c\in \mathbb{R}} \left( c(x-r_G^x(c,W_H(x)-\delta_c^{\ast}(x)))+W^G(r_G^x(c,W_H(x)-
\delta_c^{\ast}(x)))\right) \\
&=\sup_{c\in \mathbb{R}} \left( W_H(x)-\delta_c^{\ast}(x)\right) = W_H(x)-\delta^{\ast}(x).
\end{align*}
Thus we have shown that $(W^G)_H^{\ast \ast}(x)=W_H(x)-\delta^{\ast}(x)$ for all $x>0$.
A symmetric argument can be used to show that $(W_H)^G_{\ast \ast}(x)=W^G(x)+\varepsilon^{\ast}(x)$. \medskip
\end{proof}

The previous result holds without the need to assume that (\ref{stuck together}) holds. At first glance, the notation used in the previous result may appear to be the wrong way around but when (\ref{stuck together}) holds Lemma \ref{lemma no gap} implies that $(W^G)_H^{\ast \ast}(x)=W^G(x)+\varepsilon^{\ast}(x)$ and 
$(W_H)^G_{\ast \ast}(x)=W_H(x)-\delta^{\ast}(x)$.

\begin{remark} 
\label{Remark Goran's duality}For a given $x> 0$ consider the sets
\begin{eqnarray*}
A_{1}(x) &:=&\left\{ p\in [W^G(x),W_H(x)] \,\left\vert\,\exists\,c\in\mathbb{R}\;\mathrm{s.t.}\;
[l_{G}^{x}(c,p),r_{G}^{x}(c,p)] \subseteq \widehat{\mathcal{A}}_{G}^{x}(c)\right.\right\} , \\
A_{2}(x) &:=&\left\{ p\in [W^G(x),W_H(x)] \,\left\vert\,\exists\,c\in\mathbb{R}\;\mathrm{s.t.}\;
[l_{H}^{x}(c,p),r_{H}^{x}(c,p)] \subseteq \widehat{\mathcal{A}}_{x}^{H}(c)\right.\right\} .
\end{eqnarray*}
The `dual interpretation' provided in \cite{Peskir} illustrates that 
$(W^G)_{H}^{\ast\ast}(x)$ can be constructed by maximising over functions of the form 
$y\mapsto p+c(y-x)$ which hit $W^G$ before $W_H$, i.e. $(W^G)_{H}^{\ast\ast}(x)
=\sup A_{1}(x)$ as shown in Proposition \ref{proposition supsup} or by minimising over the 
functions of the form $y\mapsto p+c(y-x)$ which hit $W_H$ before $W^G$ i.e. 
$(W^G)_{H}^{\ast\ast}(x) =\inf A_{2}(x)$.
\end{remark}

\begin{figure}[h!]
\begin{center}

\begin{tabular}{cc}
\includegraphics[width=7cm]{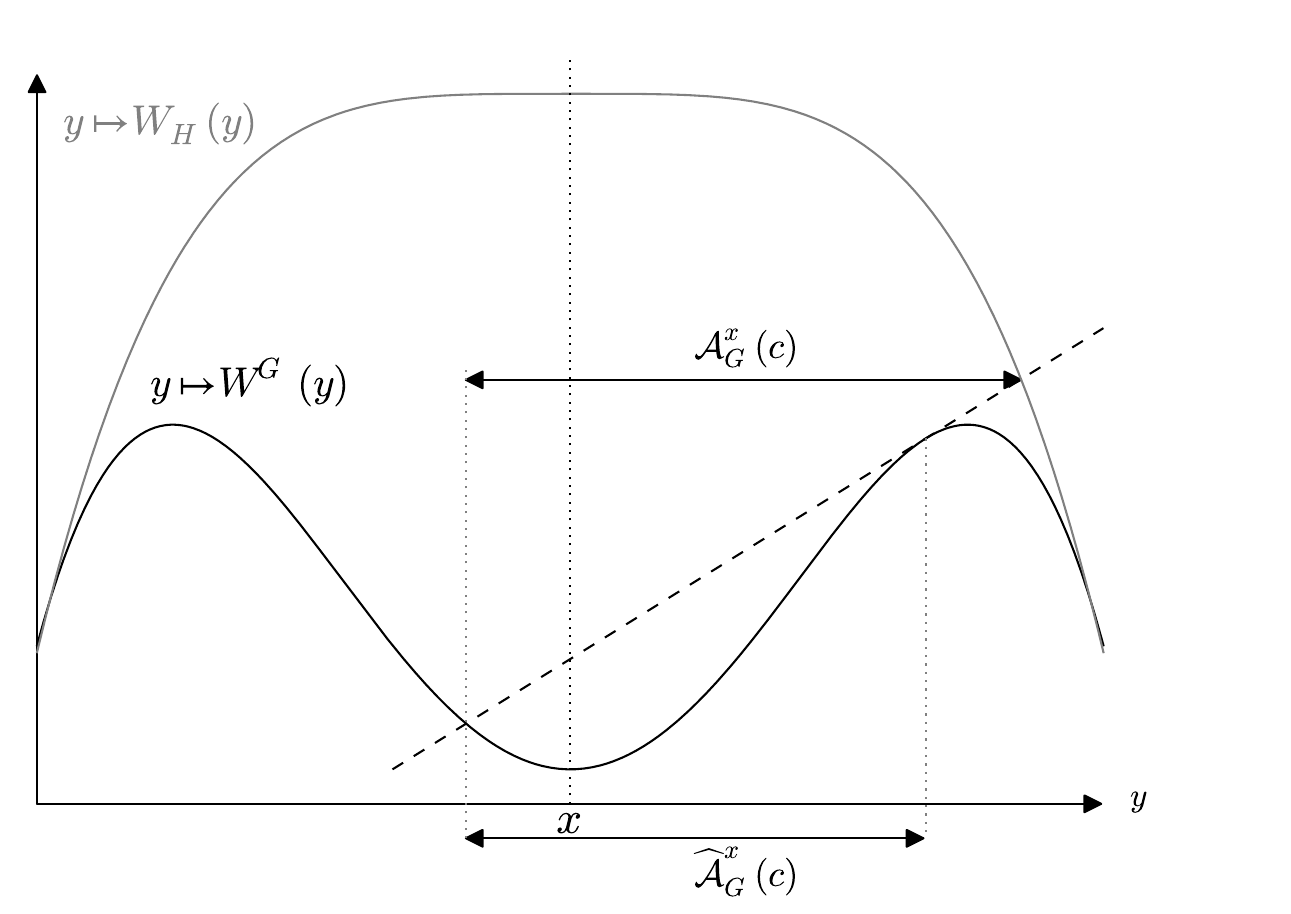} & \includegraphics[width=7cm]{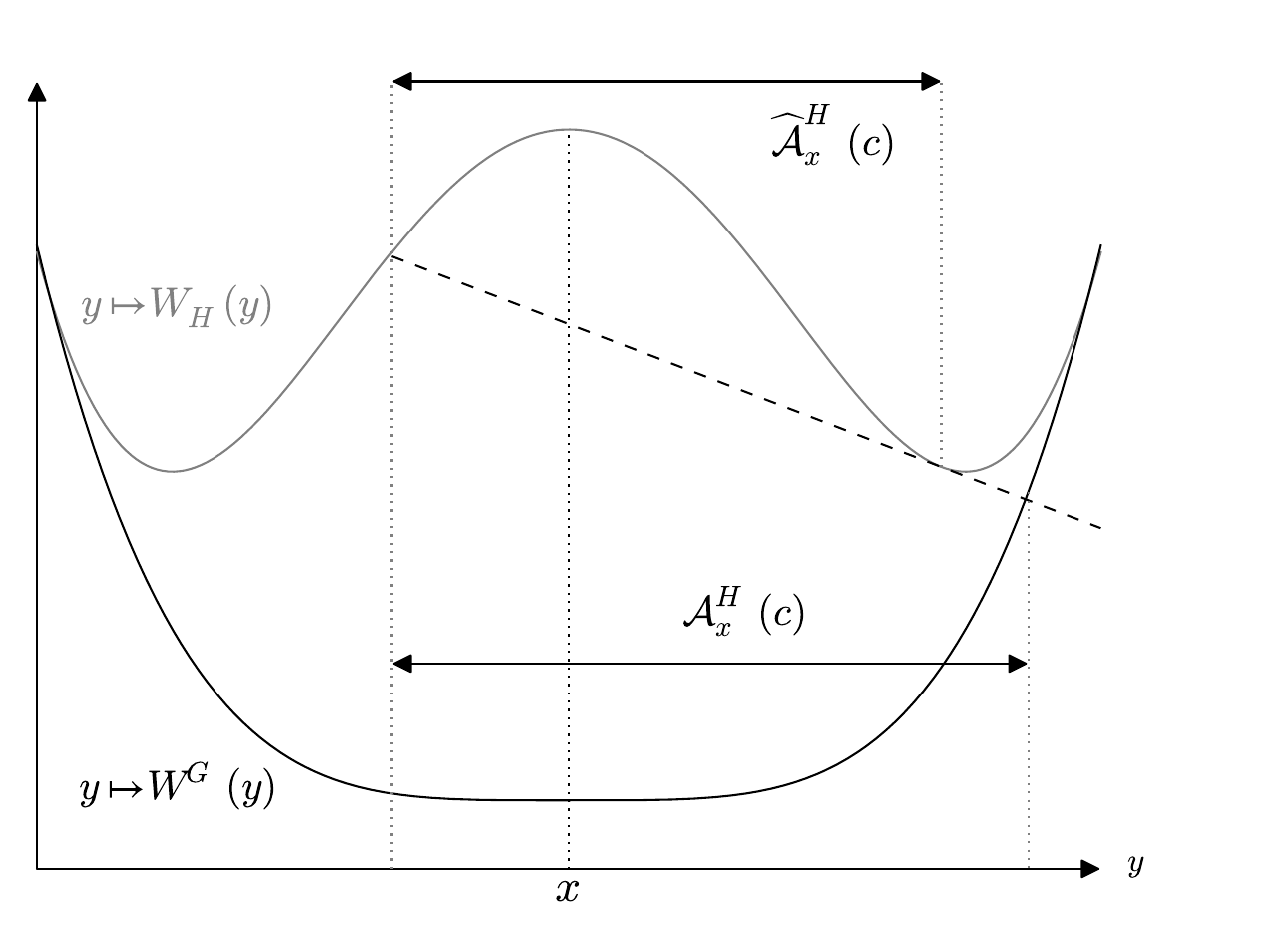}
\end{tabular} 

\begin{minipage}{0.75\textwidth}
\caption[Definition of admissible neighborhoods $\mathcal{A}_{G}^{x}(c)$ and 
$\mathcal{A}_{x}^{H}(c)$]{The left figure shows a situation when 
$\widehat{\mathcal{A}}_{G}^{x}(c) \subset \mathcal{A}_{G}^{x}(c)$ whereas the 
figure on the right shows a situation when $\widehat{\mathcal{A}}^{H}_{x}(c) 
\subset \mathcal{A}^{H}_{x}(c)$.
\label{Figure neighborhoods 2}}
\end{minipage}
\end{center}
\end{figure}

In view of the previous remark, Proposition \ref{proposition supsup} can be viewed as 
a preliminary version of the next result which shows that the functions 
(\ref{pre-concave biconjugate H}) and (\ref{pre-convex biconjugate G})
can be viewed as an extension of the concave biconjugate of $y \mapsto W^G(y)$ and 
the convex biconjugate of $y \mapsto W_H(y)$. To this end, let 
\begin{eqnarray*}
L^{x}(c,p) &=& \sup \{ z\leq x\,\vert\, p+c(z-x) \notin \mathrm{cl}(
\mathrm{epi}(W^G)\setminus \mathrm{epi}(W_H)) \} , \\
R^{x}(c,p) &=& \inf \{ y\geq x\,\vert\, p+c(y-x) \notin \mathrm{cl}(
\mathrm{epi}(W^G)\setminus \mathrm{epi}(W_H)) \} .
\end{eqnarray*}
For a given $x> 0$, let $p(x)=W_H(x)-W^G(x)$ and define two sets of admissible neighborhoods which are larger than (\ref{pre-strategy space G})
and (\ref{pre-strategy space H}) using   
\begin{eqnarray}
\mathcal{A}_{G}^{x}(c):=\bigcup\limits_{\delta\in[0,p(x)]}\bigcup\limits_{c\in\partial
^{G}_{\delta}H(x)} \{ y\in (0,\infty)\,\vert\,L^{x}(c,W_H(x)-\delta ) \leq y\leq R^{x}(c,W_H(x)-\delta)\} ,  \label{strategy G} \\
\mathcal{A}_{x}^{H}(c) :=\bigcup\limits_{\varepsilon\in[0,p(x)]}\bigcup
\limits_{c\in \partial^{H}_{\varepsilon}G(x)} \{ y\in(0,\infty)\,\vert
\,L^{x}(c,W^G(x)+\varepsilon) \leq y\leq R^{x}(c,W^G(x)+\varepsilon ) \} .
\label{strategy H}
\end{eqnarray}
These admissible neighborhoods are illustrated in Figure \ref{Figure neighborhoods 2}. These
neighborhoods coincide with those used in \cite{Peskir} and are used in the next result which is 
an analytical complement to \cite{Peskir} Theorem 4.1.

\begin{theorem}
\label{Theorem duality}
Suppose that (\ref{stuck together}) holds, then the functions $x \mapsto (W^G)_H^{\ast \ast}(x)$
and $x \mapsto (W_H)_{\ast\ast}^G(x)$ defined in (\ref{pre-concave biconjugate H}) and 
(\ref{pre-convex biconjugate G}) satisfy
\begin{eqnarray}
(W^G)_H^{\ast \ast}(x) =\inf_{z\in \mathbb{R}}\sup_{y\in \mathcal{A}^{x}_G(z)}
\left(z(x-y) +W^G(y)\right) ,  \label{concave biconjugate H} \\
(W_H)_{\ast\ast}^G(x) =\sup_{z\in \mathbb{R}}\inf_{y\in \mathcal{A}^{H}_{x}(z)}
\left( z(x-y) +W_H(y)\right) .  \label{convex biconjugate G} 
\end{eqnarray}
for all $x>0$. Moreover, $(W^G)_H^{\ast \ast}(x)=(W_H)_{\ast\ast}^G(x)$ for all $x> 0$ and 
\begin{eqnarray*}
\left\{x\in(0,\infty)\,\left\vert\, (W^G)_H^{\ast\ast}(x) =W^G(x)\right.\right\} 
=\left\{ x\in(0,\infty)\,\left\vert\,\partial^{H}G(x) \neq \emptyset \right. \right\} , \notag \\
\left\{ x\in(0,\infty)\,\left\vert\, (W^G)^{\ast\ast}_H(x) =W_H(x)\right.\right\} 
=\left\{ x\in(0,\infty)\,\left\vert\,\partial^{G}H(x) \neq \emptyset \right. \right\} \notag .
\end{eqnarray*}
\end{theorem}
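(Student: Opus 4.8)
The plan is to read off every assertion from Proposition~\ref{proposition supsup} together with the no-gap identity of Lemma~\ref{lemma no gap}. Set $p^{\ast}(x):=W^G(x)+\varepsilon^{\ast}(x)$; by (\ref{no gap}) this also equals $W_H(x)-\delta^{\ast}(x)$. Proposition~\ref{proposition supsup} identifies $(W^G)_H^{\ast\ast}(x)=W_H(x)-\delta^{\ast}(x)$ and $(W_H)_{\ast\ast}^G(x)=W^G(x)+\varepsilon^{\ast}(x)$, so both equal $p^{\ast}(x)$ and the asserted equality $(W^G)_H^{\ast\ast}(x)=(W_H)_{\ast\ast}^G(x)$ is immediate. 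It then remains to establish the two minimax formulas and the two coincidence sets.

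For (\ref{concave biconjugate H}) I would fix $x$, write $\Phi(z,y):=z(x-y)+W^G(y)$, and note that $\Phi(z,y)\gtrless p$ exactly as $W^G(y)$ lies above or below the line $y\mapsto p+z(y-x)$ through $(x,p)$. I would then prove the two bounds that pin the infimum to $p^{\ast}(x)$. First, for every slope $z$ one has $\sup_{y\in\mathcal{A}^{x}_G(z)}\Phi(z,y)\geq p^{\ast}(x)$: the enlarged neighbourhood (\ref{strategy G}) is built so that its endpoints $L^{x},R^{x}$ reach the lower obstacle, whence the graph of $W^G$ meets the line through $(x,p^{\ast}(x))$ of slope $z$ somewhere in $\mathcal{A}^{x}_G(z)$, forcing $\Phi\geq p^{\ast}(x)$ there; taking the infimum over $z$ gives $\inf_z\sup_y\Phi\geq p^{\ast}(x)$. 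Second, at the common optimal slope $c^{\ast}$ appearing in the proof of Lemma~\ref{lemma no gap}, for which (\ref{pre no gap}) is an equality and hence $\varepsilon^{\ast}_{c^{\ast}}(x)=\varepsilon^{\ast}(x)$ and $\delta^{\ast}_{c^{\ast}}(x)=\delta^{\ast}(x)$, the line at height $p^{\ast}(x)$ dominates $W^G$ throughout $\mathcal{A}^{x}_G(c^{\ast})$, touching it only at the contact abscissae, so $\sup_{y\in\mathcal{A}^{x}_G(c^{\ast})}\Phi(c^{\ast},y)=p^{\ast}(x)$ and $\inf_z\sup_y\Phi\leq p^{\ast}(x)$. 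Combining the two bounds yields (\ref{concave biconjugate H}); the convex representation (\ref{convex biconjugate G}) follows from the mirror-image argument with $(W^G,\varepsilon^{\ast},\sup)$ replaced by $(W_H,\delta^{\ast},\inf)$.

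For the coincidence sets I would argue exactly as in Lemma~\ref{lemma constructive}, now reading the values off $p^{\ast}(x)$. Since $(W^G)_H^{\ast\ast}(x)=W^G(x)+\varepsilon^{\ast}(x)$, the identity $(W^G)_H^{\ast\ast}(x)=W^G(x)$ holds iff $\varepsilon^{\ast}(x)=\inf_c\varepsilon^{\ast}_c(x)=0$; setting $\varepsilon=0$ in (\ref{epsilon-super}) recovers (\ref{Superdiff H}), so $\varepsilon^{\ast}_c(x)=0$ means $c\in\partial^{H}G(x)$, and the relation $\{\varepsilon^{\ast}>0\}=\{\partial^{H}G=\emptyset\}$ is built into the definitions precisely as in Lemma~\ref{lemma constructive}. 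Hence $\{(W^G)_H^{\ast\ast}=W^G\}=\{\partial^{H}G\neq\emptyset\}$. Dually $(W^G)_H^{\ast\ast}(x)=W_H(x)-\delta^{\ast}(x)$ equals $W_H(x)$ iff $\delta^{\ast}(x)=0$ iff $\partial^{G}H(x)\neq\emptyset$, which gives the second identity.

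The step I expect to be the main obstacle is the lower bound $\sup_{y\in\mathcal{A}^{x}_G(z)}\Phi(z,y)\geq p^{\ast}(x)$ for suboptimal $z$, together with the verification that the enlarged neighbourhood does not overshoot at $c^{\ast}$; this is exactly the bookkeeping of the endpoints $L^{x},R^{x}$ and of the union over $\delta$ in (\ref{strategy G}) (cf. Figure~\ref{Figure neighborhoods 2}). It is here that assumption (\ref{stuck together}) is indispensable: through Lemma~\ref{lemma no gap} it forces the optimal slopes for $\varepsilon^{\ast}(x)$ and $\delta^{\ast}(x)$ to coincide and makes $y\mapsto p^{\ast}(x)+c^{\ast}(y-x)$ the unique line through $(x,p^{\ast}(x))$ that both dominates $W^G$ up to its $W_H$-contacts and minorises $W_H$ up to its $W^G$-contacts. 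Without (\ref{stuck together}) the witnessing ``poke'' of $W^G$ above the line can be pushed outside the band (cf. Remark~\ref{Remark problem with gap}), the two biconjugates may genuinely separate, and the delicate part becomes controlling the behaviour of $L^{x},R^{x}$ as $y\downarrow 0$ and $y\uparrow\infty$.
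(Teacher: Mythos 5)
Your overall architecture coincides with the paper's: both reduce the identity $(W^G)_H^{\ast\ast}=(W_H)_{\ast\ast}^G$ to Proposition~\ref{proposition supsup} plus Lemma~\ref{lemma no gap}, both obtain the minimax formulas by evaluating the inner extremum slope by slope, and both read the coincidence sets off the definitions of $\varepsilon^{\ast}$ and $\delta^{\ast}$. The gap is in the step you yourself single out as the main obstacle: the per-slope bound $\sup_{y\in\mathcal{A}^{x}_{G}(z)}\bigl(z(x-y)+W^G(y)\bigr)\geq p^{\ast}(x)$, where $p^{\ast}(x)=W^G(x)+\varepsilon^{\ast}(x)=W_H(x)-\delta^{\ast}(x)$. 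The mechanism you propose for it does not work. The endpoints $L^{x},R^{x}$ are exit points from the band $\mathrm{cl}(\mathrm{epi}(W^G)\setminus\mathrm{epi}(W_H))$, so on the whole interval $[L^{x}(z,W_H(x)-\delta),R^{x}(z,W_H(x)-\delta)]$ the line at height $W_H(x)-\delta$ lies \emph{above} $W^G$; hence $z(x-y)+W^G(y)\leq W_H(x)-\delta$ there, and taking the union over the admissible $\delta\geq\delta^{\ast}_{z}(x)$ gives $\sup_{y\in\mathcal{A}^{x}_{G}(z)}\bigl(z(x-y)+W^G(y)\bigr)=W_H(x)-\delta^{\ast}_{z}(x)\leq p^{\ast}(x)$, with equality only at optimal slopes. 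In other words, an endpoint ``reaching the lower obstacle'' produces the value $W_H(x)-\delta$, which bounds the expression from \emph{above} by $p^{\ast}(x)$, not from below; for a suboptimal slope $z$ (and a fortiori for a steep $z$ with $\partial^{G}_{\delta}H(x)=\emptyset$ for every $\delta$, where the neighbourhood is empty) your claimed inequality fails.

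What makes the representation work --- and what the paper's proof actually does --- is to run the computation over the other family of neighbourhoods, $\mathcal{A}^{H}_{x}(z)$, built from the lines through $(x,W^G(x)+\varepsilon)$ that dominate $W^G$ until they meet $W_H$. There the correct per-slope identity is $\sup_{y}\bigl(z(x-y)+W^G(y)\bigr)=W^G(x)+\varepsilon^{\ast}_{z}(x)\geq p^{\ast}(x)$, and the lower bound comes not from the endpoints but from an \emph{interior} tangency: by minimality of $\varepsilon^{\ast}_{z}(x)$ and continuity, the critical line at height $W^G(x)+\varepsilon^{\ast}_{z}(x)$ must touch $W^G$ somewhere in the neighbourhood, and at that contact the expression equals $W^G(x)+\varepsilon^{\ast}_{z}(x)$. (The displayed formula (\ref{concave biconjugate H}) appears to carry the neighbourhoods the opposite way round from the paper's own proof, which computes with $\mathcal{A}^{H}_{x}(c)$; whichever labelling one adopts, the argument must locate this tangency point and check it survives the passage from $\widehat{\mathcal{A}}$ to $\mathcal{A}$ --- that is the content still missing from your plan.) The remaining parts --- the equality of the two biconjugates via Proposition~\ref{proposition supsup} and Lemma~\ref{lemma no gap}, and the coincidence sets via $\varepsilon^{\ast}=0$ and $\delta^{\ast}=0$ --- are fine and agree with the paper.
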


\begin{proof}
Fix $x> 0$ and $c\in \mathbb{R}$ and define
\begin{equation*}
(W^G)_H^{\ast}(c) := \inf_{y\in \mathcal{A}_x^H(c)}(cy-W^G(y)) . \label{duality conjugate}
\end{equation*}
By definition, the line $f(y):=W^G(x)+\varepsilon_c^{\ast}(x)+c(y-x)$ dominates $y\mapsto
W^G(y)$ on $\widehat{\mathcal{A}}_x^H(c)=[l_H^x(c,W^G(x)),r_H^x(c,W^G(x))] \subseteq 
\mathcal{A}_x^H(c)$. If there exists $z\in \mathcal{A}_x^H(c) \setminus \widehat{\mathcal{A}}
_x^H(c)$ such that $f(z)=W^G(z)$ then either $c\in \partial^H G(z)$ or $z$ is on the boundary 
of $\mathcal{A}_x^H(c)$, i.e. $z=L_H^x(c,W^G(x))$ or $z=R_H^x(c,W^G(x))$. Thus we may 
conclude that $(W^G)_H^{\ast}(c)=cx^{\prime}-W^G(x^{\prime})$ for some $x^{\prime}$
such that 
\begin{equation*}
c(y-x^{\prime})+W^G(x^{\prime}) = W^G(x)+\varepsilon_c^{\ast}(x). 
\end{equation*}
Hence 
\begin{equation*}
\inf_{z\in \mathbb{R}}\sup_{y\in \mathcal{A}^{x}_{G}(z)}\left(z(x-y) +W^G(y)\right)
=\inf_{z\in \mathbb{R}}(W^G(x)-\varepsilon_z^{\ast}(x))
\end{equation*}
so it follows from the definition of $\varepsilon^{\ast}(x)$ and Proposition 
\ref{proposition supsup} that 
\begin{equation*}
\inf_{z\in \mathbb{R}}\sup_{y\in \mathcal{A}^{x}_{G}(z)}\left(z(x-y) +W^G(y)\right)
=W^G(x)+\varepsilon^{\ast}(x)= (W^G)_H^{\ast\ast}(x) .
\end{equation*}
A symmetric argument can be used to show that 
\begin{equation*}
\inf_{z\in \mathbb{R}}\sup_{y\in \mathcal{A}^{x}_{G}(z)}\left(z(x-y) +W^G(y)\right)
=W_H(x)-\delta^{\ast}(x)= (W_H)^G_{\ast\ast}(x) .
\end{equation*}
It follows from Lemma \ref{lemma no gap} that $(W^G)_H^{\ast \ast}(x)=(W_H)^G_{\ast \ast}(x)$ and
the final statement follows from the definition of $\varepsilon^{\ast}(x)$ and $\delta^{\ast}(x)$.
\medskip
\end{proof}

Theorem \ref{Theorem duality} can be reformulated in terms of the $F$-concavity (resp. $F$-convexity) of 
the functions $G$ and $H$ determining $W^G$ and $W_H$. To see this observe that it follows from the definitions of 
$y \mapsto W^G(y)$ and $y\mapsto W_H(y)$ in (\ref{W^G function}) and (\ref{W_H function}) that
\begin{align}
(G)_H^{\ast\ast}(x) &:= (W^G)_H^{\ast \ast}(F(x))\psi(x) =\inf_{z\in \mathbb{R}}\sup_{F(y)\in \mathcal{A}^{x}_G(z)}
\left(z(F(x)-F(y)) + \left(\frac{G}{\psi}\right)(y)\right)\psi(x) , \label{F-concave H}  \\
(H)_{\ast\ast}^G(x) &:= (W_H)_{\ast\ast}^G(F(x))\psi(x) =\sup_{z\in \mathbb{R}}\inf_{F(y)\in \mathcal{A}^{H}_{x}(z)}
\left( z(F(x)-F(y)) +\left(\frac{H}{\psi}\right)(y)\right)\psi(x) \label{F-convex G},
\end{align}
which are the modified $F$-concave/$F$-convex biconjugates used in \cite{Peskir}. It is shown in the next section that (\ref{F-concave H})-(\ref{F-convex G}) are $r$-semiharmonic functions which solve the dual problems (\ref{dual problems}).

We began constructing an extension of the convex/concave biconjugates of $W^G$ and $W_H$ due to the fact that $G$ is $r$-superharmonic with respect to the diffusion $X$ if and only if $G/\psi$ is $F$-concave or equivalently $W^G$ is concave. For studying optimal stopping games, we shall also use that $G$ is $r$-superharmonic with respect to the diffusion $X$ if and only if $G/\varphi$ is $\widetilde{F}$-concave or equivalently $(G/\varphi)\circ (-\widetilde{F})^{-1}$ is concave on $(0,\infty)$. With this in mind,  Theorem (\ref{Theorem duality}) can be formulated with respect to the other ratio of the fundamental solutions.

\begin{corollary}
\label{corollary general case}
Let $\widetilde{W}^G:=(G/\varphi)\circ (-\widetilde{F})^{-1}$ and $\widetilde{W}_H:=(H/\varphi)\circ (-\widetilde{F})^{-1}$. Assuming that (\ref{integ assumption})-(\ref{boundary assumption}) hold. Then for all $x>0$
\begin{equation*}
(\widetilde{W}^G)_H^{\ast \ast}(x) :=\inf_{z\in \mathbb{R}}\sup_{y\in \widetilde{\mathcal{A}}^{x}_G(z)}
\left(z(x-y) +\widetilde{W}^G(y)\right) = \sup_{z\in \mathbb{R}}\inf_{y\in \widetilde{\mathcal{A}}^{H}_{x}(z)}
\left( z(x-y) +\widetilde{W}_H(y)\right) =:(\widetilde{W}_H)_{\ast\ast}^G(x).
\end{equation*}
where the sets 
$\widetilde{\mathcal{A}}^{x}_G(z)$, $\widetilde{\mathcal{A}}^{H}_{x}(z)$ are defined as in (\ref{strategy G}) and (\ref{strategy H}) replacing $W^G$, $W_H$ with $\widetilde{W}^G$, $\widetilde{W}_G$. 
\end{corollary}

\begin{proof}
In this section we have not used any properties of the diffusion $X$, nor the specific form of the functions $W^G,W_H$. Consequently, the previous Theorem holds for any two continuous functions $g,h: (0,\infty) \rightarrow \mathbb{R}$ such that $g\leq h$ and $\lim_{x\downarrow 0}g(x)-h(x)=\lim_{x\uparrow \infty}g(x)-h(x)=0$. In, particular, replacing $W^G$, $W_H$ with $\widetilde{W}^G$, $\widetilde{W}_G$ does not alter the result. 
\end{proof}

\medskip

It was observed in Remark \ref{Remark duality} that the assumption (\ref{boundary assumption}) may be relaxed by extending the functions $W^G$, $W_H$ onto $\mathbb{R}_+$ in a manner which ensures that $W^G(x)+\varepsilon^{\ast}(x)=W_H(x)-\delta^{\ast}(x)$ for all $x>0$. This approach can be used to relax the assumptions of Theorem \ref{Theorem duality}.

\begin{corollary}
\label{corollary extend}
Take $w_0 \in [W^G(0+),W_H(0+)]$ and extend the functions $W^G$, $W_H$ onto $\mathbb{R}_+$ by setting 
$W^G(0)=[W^G(0+),w_0]$ and $W_H(0)=[w_0,W_H(0+)]$. Suppose that the intervals in the definitions (\ref{delta-sub})-(\ref{epsilon-super}) are take to be closed (as apposed to open) then
\begin{equation*}
(W^G)_H^{\ast \ast}(x) =\inf_{z\in \mathbb{R}}\sup_{y\in \mathcal{A}^{x}_G(z)}
\left(z(x-y) +W^G(y)\right) = \sup_{z\in \mathbb{R}}\inf_{y\in \mathcal{A}^{H}_{x}(z)}
\left( z(x-y) +W_H(y)\right) = (W_H)_{\ast\ast}^G(x)
\end{equation*}
for all $x\geq 0$.
\end{corollary}

\begin{proof}
Under these assumptions, Remark \ref{Remark duality} shows that Lemma \ref{lemma no gap} holds. Furthermore, under these assumptions the functions $(W^G)^{\ast\ast}_H$ and $(W_H)_{\ast\ast}^G$ defined in (\ref{pre-convex biconjugate G})-(\ref{pre-concave biconjugate H}) are defined on $\mathbb{R}_+$ and $(W^G)^{\ast\ast}_H(0)=(W_H)_{\ast\ast}^G(0)=w_0$. Thus with this extended definition, the corollary follows from Theorem \ref{Theorem duality}.
\end{proof}

\begin{remark}
The function $y \mapsto (W^G)_H^{\ast \ast}(y)$ can be viewed as the `concave biconjugate of $W^G$ in the presence of $y\mapsto W_H(y)$' (or constrained to remain within $\mathrm{cl}(\mathbb{R}^2\setminus \mathrm{epi}(W_H))$) for the following three reasons: 
\begin{enumerate}

\item Theorem \ref{Theorem duality} illustrates that the function $y \mapsto (W^G)_H^{\ast \ast}(y)$ can be expressed as 
\begin{equation*}
(W^G)_H^{\ast \ast}(x) = \inf_{z\in \mathbb{R}}\left( zx -\inf_{y\in \mathbb{R}} \left( zy -W^G(y)+
\delta(y\,\vert\,\mathbb{R}_+\setminus\mathcal{A}^{x}_G(z))\right)\right) , \label{biconjugate again}
\end{equation*} 
where $\delta(y\,\vert\, \mathbb{R}_+\setminus\mathcal{A}^{x}_G(z))$ is the `characteristic function' of the set $\mathbb{R}_+\setminus\mathcal{A}^{x}_G(z)$, 
i.e. $\delta(y\,\vert\,\mathbb{R}_+\setminus\mathcal{A}^{x}_G(z))=0$ if $y\in \mathcal{A}^{x}_G(z)$ and 
$\delta(y\,\vert\,\mathbb{R}_+\setminus\mathcal{A}^{x}_G(z))=\infty$ otherwise. The difference with the standard concave biconjugate
is that this characteristic function depends on the choice of $z$ and $x$. However, this dependence seems essential to ensure that $\mathrm{epi}(W_H)\subseteq \mathrm{epi}((W^G)_H^{\ast \ast})$.

\item  For any measurable function $f:(0,\infty) \rightarrow \mathbb{R}$, the superdifferential of $f$ is as defined in (\ref{superdiff}) and the $\varepsilon$-superdifferential is defined as
\begin{equation*}
\partial_{\varepsilon}f(x)= \{ c\in\mathbb{R}\,\vert\, f(y)-f(x)-\varepsilon \geq c(y-x) \; \forall y\in (0,\infty) \}.
\end{equation*}
Hence $f_{\ast\ast}(x)-f(x)=\inf\{\varepsilon\geq 0\,\vert\,\partial_{\varepsilon}f(x)\neq \emptyset\}$.
This definition of a $\varepsilon$-superdifferential coincides with the definition (\ref{epsilon-super}) 
when $W_H$ is taken to be the multivalued function 
\begin{equation*}
W_{H}(y)=\left\{
\begin{array}{cc}
\left[ W^G(0),+\infty \right] & y=0, \\
\infty & y\in (0,+\infty), \\
\left[ W^G(\infty),+\infty \right] & y=+\infty .
\end{array}
\right. 
\end{equation*}
Moreover, when (\ref{stuck together}) holds, it follows from Theorem \ref{Theorem duality} that 
$(W^G)_H^{\ast \ast}(x)-W^G(x)=\varepsilon^{\ast}(x)$ (where $\varepsilon^{\ast}(x)$ is as defined in 
(\ref{epsilon^ast})) so $y \mapsto (W^G)_H^{\ast \ast}(y)$ and $y \mapsto f_{\ast\ast}(y)$ can both be 
characterised in terms of a smallest spike variation. 

\item In Corollary \ref{corollary construction} it was shown that the concave biconjugate of $y \mapsto (G/\psi) 
\circ F^{-1}(y)$ is such that $((G/\psi) \circ F^{-1})_{\ast\ast}(x)=\sup A_1(x)$ where the set $A_1(x)$ is defined
in (\ref{set A1}). This characterisation of the concave biconjugate uses the same method of construction as
is used in Proposition \ref{proposition supsup} to define $y \mapsto (W^G)_H^{\ast \ast}(y)$.

\end{enumerate}
The function $y \mapsto (W_H)^G_{\ast\ast}(y)$ will be referred to as the `convex biconjugate of $W_G$
in the presence of $y\mapsto W^H(y)$ (or constrained to remain within $\mathrm{epi}(W^G)$) for directly similar reasons. 
\end{remark}

\section{Nash equilibrium in optimal stopping games}
\label{section games}

In this section Theorem \ref{Theorem duality} is applied to show that
the infinite time horizon optimal stopping game (\ref{lower value})-(\ref{upper value}) exhibits both a Stackelberg and a Nash equilibrium. The first Theorem of this Section shows that under the assumptions (\ref{integ assumption}) and (\ref{boundary assumption}) upper and lower values of the optimal stopping game (\ref{lower value}) and (\ref{upper value}) satisfy $\underline{V}\geq (G)_H^{\ast\ast}$ and $\overline{V}\leq (H)^G_{\ast\ast}$ respectively. Consequently, it follows from Theorem \ref{Theorem duality} that the game has a value, namely $\overline{V}=\underline{V}=(G)^{\ast\ast}_H$ hence the game has a Stackelberg equilibrium. 
In fact the game has a value without the need to assume the boundary condition (\ref{boundary assumption}) holds, but we need to account for the fact that when both minimising and maximising agent select the same stopping time $R_{x}(\tau,\tau) = E_{x}[ e^{-r\tau} G( X_{\tau})]$. 

\begin{theorem}
\label{Theorem Game has value}Suppose that (\ref{integ assumption}) holds and consider the optimal stopping game (\ref{lower value})-(\ref{upper value}). The optimal
stopping game has a Stackelberg equilibrium , i.e. the games has a value $V$ which can be represented as
\begin{equation*}
V(x)=\left( G\right) _{H}^{\ast \ast }(x)=(W^G)^{\ast\ast}_H(F(x))\psi(x)
\end{equation*}
for all $x\in I$. Moreover, when (\ref{boundary assumption}) hold, it follows that 
$V=\left( G\right) _{H}^{\ast \ast }=\left( H\right) _{\ast \ast }^{G}$ without the need to extend $W^G$ and $W_H$ onto $\mathbb{R}_+$.
\end{theorem}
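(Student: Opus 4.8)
The plan is to sandwich the two game values between the two modified biconjugates and then close the gap with Theorem~\ref{Theorem duality}. Write $V^{\sharp}:=(G)_{H}^{\ast\ast}$ and $V^{\flat}:=(H)_{\ast\ast}^{G}$ for the functions defined in (\ref{F-concave H}) and (\ref{F-convex G}). From their construction I first extract the two semiharmonic properties I need. On the open set $\{(W^{G})_{H}^{\ast\ast}<W_{H}\}$ the upper barrier is inactive, so the modified concave biconjugate $(W^{G})_{H}^{\ast\ast}$ coincides locally with the ordinary concave envelope of $W^{G}$ and is therefore concave; by the equivalence ``$U$ is $r$-superharmonic iff $U/\psi$ is $F$-concave'' this shows $V^{\sharp}$ is $r$-superharmonic on $\mathcal{C}_{1}:=\{V^{\sharp}<H\}$. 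Symmetrically, $V^{\flat}$ is $r$-subharmonic on $\mathcal{C}_{2}:=\{V^{\flat}>G\}$. In both cases $G\le V^{\sharp},V^{\flat}\le H$ by Proposition~\ref{proposition supsup}.

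For the upper bound I fix the inf-player's strategy $\sigma^{\ast}:=\inf\{t\ge 0\,\vert\,V^{\sharp}(X_{t})=H(X_{t})\}$. For an arbitrary $\tau$ I bound $G(X_{\tau})\le V^{\sharp}(X_{\tau})$ on $\{\tau\le\sigma^{\ast}\}$ and use $H(X_{\sigma^{\ast}})=V^{\sharp}(X_{\sigma^{\ast}})$ on $\{\tau>\sigma^{\ast}\}$, so that
\begin{equation*}
R_{x}(\tau,\sigma^{\ast})\le E_{x}\left[e^{-r(\tau\wedge\sigma^{\ast})}V^{\sharp}(X_{\tau\wedge\sigma^{\ast}})\right].
\end{equation*}
Since $X$ remains in $\mathcal{C}_{1}$ on $[0,\sigma^{\ast})$, the process $(e^{-r(t\wedge\sigma^{\ast})}V^{\sharp}(X_{t\wedge\sigma^{\ast}}))_{t\ge 0}$ is a supermartingale, and optional sampling gives $R_{x}(\tau,\sigma^{\ast})\le V^{\sharp}(x)$; taking $\sup_{\tau}$ and then $\inf_{\sigma}$ yields $\overline{V}\le V^{\sharp}$. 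The symmetric argument with the sup-player's strategy $\tau^{\ast}:=\inf\{t\ge 0\,\vert\,V^{\flat}(X_{t})=G(X_{t})\}$, now using the submartingale property of $(e^{-r(t\wedge\tau^{\ast})}V^{\flat}(X_{t\wedge\tau^{\ast}}))_{t\ge 0}$ on $\mathcal{C}_{2}$ together with $H\ge V^{\flat}$, gives $\underline{V}\ge V^{\flat}$.

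Combining these estimates with the elementary minimax inequality $\underline{V}\le\overline{V}$ produces the chain $V^{\flat}\le\underline{V}\le\overline{V}\le V^{\sharp}$. When (\ref{boundary assumption}) holds (together with the standing integrability (\ref{integ assumption})) the hypothesis (\ref{stuck together}) of Theorem~\ref{Theorem duality} is satisfied, so $V^{\sharp}=V^{\flat}$ and the chain collapses to $\underline{V}=\overline{V}=V^{\sharp}=V^{\flat}$. This simultaneously delivers the Stackelberg equilibrium with value $V=(G)_{H}^{\ast\ast}$ and the ``moreover'' identity $V=(G)_{H}^{\ast\ast}=(H)_{\ast\ast}^{G}$; it is exactly the duality that forces the upper and lower bounds to meet.

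The payoff comparisons and the monotonicity of $\sup/\inf$ are immediate; the step that requires genuine care is the application of optional sampling, since $\tau$, $\sigma$, $\tau^{\ast}$, $\sigma^{\ast}$ may be infinite and $X$ may be driven toward a natural boundary. The uniform integrability of $(G(X_{t})e^{-rt})_{t\ge 0}$ and $(H(X_{t})e^{-rt})_{t\ge 0}$ supplied by (\ref{integ assumption}) is what legitimises the limit $t\to\infty$ in the optional sampling identities, while (\ref{boundary assumption}) and the convention $F(X_{\infty})=0$ of Remark~\ref{convention remark} control the contribution of $\{\tau\wedge\sigma=\infty\}$ and ensure the super- and submartingale bounds genuinely coincide. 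I therefore expect the boundary behaviour, rather than the convex-analytic core, to be the main obstacle.
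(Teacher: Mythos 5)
Your route is genuinely different from the paper's: you freeze one player's candidate optimal strategy and run a super/submartingale verification argument on the candidate value function, whereas the paper never invokes the semiharmonicity of $(G)_H^{\ast\ast}$ at this stage. Instead it restricts both players to two-sided exit times from the admissible neighbourhoods $\widehat{\mathcal{A}}_x$, applies the explicit Laplace transform formulas (\ref{laplace}) together with the harmonicity of $\varphi$, and reduces the resulting bounds directly to the convex-analytic quantities $\delta_c^{\ast}$ and $\varepsilon_c^{\ast}$, closing the sandwich with Theorem \ref{Theorem duality} exactly as you do. Your verification scheme is cleaner in outline, but it front-loads a fact the paper only establishes later and with real effort.

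That is the first genuine gap: your assertion that on $\{(W^G)_H^{\ast\ast}<W_H\}$ ``the upper barrier is inactive, so the modified biconjugate coincides locally with the ordinary concave envelope of $W^G$ and is therefore concave'' is precisely the content of Lemma \ref{lemma sets}, and it is not immediate from the definitions: the neighbourhoods $\mathcal{A}_G^{x}(z)$ entering (\ref{F-concave H}) are determined by where the tangent lines meet $W_H$, so the constraint can in principle influence the value even at points where the graphs do not touch. The paper's proof of Lemma \ref{lemma sets} has to introduce the auxiliary obstacle $\widetilde{W}(\cdot\,;x)$ on a connected component $[z_-(x),z_+(x)]$ and show $\widehat{\varepsilon}(z)=\varepsilon^{\ast}(z)$ there; without that argument your supermartingale property is unproven. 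The second gap concerns the statement itself: the first assertion claims a Stackelberg equilibrium under (\ref{integ assumption}) alone, but your chain $V^{\flat}\le\underline{V}\le\overline{V}\le V^{\sharp}$ only collapses once (\ref{boundary assumption}), hence (\ref{stuck together}), is invoked, so you prove only the ``moreover'' part. The paper avoids this by replacing $H$ with the boundary-modified $\widetilde{H}$ (multivalued in $[G,H]$ at $a$ and $b$), for which the analogue of (\ref{stuck together}) holds by construction, so that Theorem \ref{Theorem duality} applies to the pair $(G,\widetilde{H})$ and yields $\underline{V}=\overline{V}=(G)_{\widetilde H}^{\ast\ast}=(G)_H^{\ast\ast}$ without (\ref{boundary assumption}). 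Finally, the optional sampling step near the natural boundary (where $\sigma^{\ast}$ may be infinite) is flagged but not resolved in your write-up; the paper handles it with the approximating domains $b'\uparrow b$ as in Theorem \ref{Theorem natural boundaries}.
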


\begin{proof}
To accomodate the asymmetry in the objective function discussed above, extend the functions $W^G$, $W_H$ onto $\mathbb{R}_+$ by setting $W^G(0):=W^G(0+)$ and $W_H(0)=[W^G(0+),W_H(0+)]$. As noted in Corollary \ref{corollary extend}, these extended definitions of $W^G$, $W_H$ ensure that $(W_H)^{G}_{\ast\ast}(0)=
(W^G)_{H}^{\ast\ast}(0)=W^G(0+)=G(a+)/\varphi(a+)$. Let $F(a):=F(a+)=0$ and $\varphi(a):=\varphi(a+)=+\infty$ then
\begin{equation*}
(H)^G_{\ast\ast}(x):=(W_H)^{G}_{\ast\ast}(F(x))\varphi(x) \qquad , \qquad
(G)_H^{\ast\ast}(x):=(W^G)_{H}^{\ast\ast}(F(x))\varphi(x) ,
\end{equation*}
for all $x\in (a,b)$ and $(H)^G_{\ast\ast}(a)=(G)_H^{\ast\ast}(a)=G(a+)$. It follows from Corollary \ref{corollary extend} (or Theorem \ref{Theorem duality} if (\ref{boundary assumption}) holds) that it is sufficient to show that
\begin{equation}
(H) _{\ast \ast }^{G} \leq \underline{V}\leq \overline{V}\leq
\left( G\right) _{H}^{\ast \ast } \label{step 3.15}
\end{equation}
for all $x\in I$ where $\underline{V}$ and $\overline{V}$ were defined in (\ref{lower value})
and (\ref{upper value}) respectively. The second inequality follows from the definitions of $\underline{V}$ and $\overline{V}$.

For the first inequality in (\ref{step 3.15}) let $\widehat{\mathcal{A}}%
_{x}=\bigcup_{z\in \mathbb{Z}}\widehat{\mathcal{A}}_{G}^{x}\left( z\right) $
and $\widehat{\tau}=\inf\{ \left. t\geq 0\,\right\vert
\,X_{t}\notin \widehat{\mathcal{A}}_{x}\} $. Let $a'=\inf\widehat{\mathcal{A}}_x$ 
and $b'=\sup\widehat{\mathcal{A}}_x$ and suppose that $b'<b$. It follows that 
\begin{equation*}
\underline{V}\left( x\right) \geq \inf_{\sigma \leq \widehat{\tau}}E_{x}\left[ e^{-r\sigma} \widehat{H}\left( X_{\sigma }\right) \right] =:V^{-}\left( x\right) .
\end{equation*}
where $\widehat{H}\left( y\right):=H\left( y\right)\mathbb{I}_{\left(
a^{\prime },b^{\prime }\right) }+G\left( y\right) \mathbb{I}_{%
\left[ y=a^{\prime }\right] \cup \left[ y=b^{\prime }\right] }$.
The stopping time which attains this infimum is of the form $\widehat{\sigma}=T_{a^{\ast}}\wedge
T_{b^{\ast }}$ for some $a^{\prime }\leq a^{\ast }\leq x\leq b^{\ast
}\leq b^{\prime}$. As we have assumed that $\sup\widehat{\mathcal{A}%
}_{x}<b$ the process 
$\left( e^{-r\left( t\wedge \widehat{\sigma }\right) }\varphi \left(
X_{t\wedge \widehat{\sigma }}\right) \right) _{t\geq 0}$ is a $\left( P_{x},%
\mathcal{F}_{t\wedge \widehat{\sigma }}\right)$-martingale. 
The optional sampling theorem implies that $E_{y}\left[ e^{-r\sigma} 
\varphi \left( X_{\sigma }\right) \right] =\varphi \left( y\right) $ for all $y\in \widehat{\mathcal{A}}_x$ 
and all $\sigma \leq \widehat{\tau}$. Hence for arbitrary $c\in \mathbb{R}$,
\begin{equation}
V^{-}\left( x\right) =\inf_{\sigma \leq \widehat{\tau}}E_{x}\left[ e^{-r\sigma } \widehat{H}\left(X_{\sigma }\right) \right] =c\varphi \left( x\right)
+\inf_{\sigma \leq \widehat{\tau}}E_{x}\left[ e^{-r\sigma}( \widehat{H}\left( X_{\sigma }\right)-c\varphi \left( X_{\sigma }\right) ) \right] .
\label{step 3.6}
\end{equation}
Since (\ref{step 3.6}) holds for all $c\in \mathbb{R}$, it follows that
\begin{equation}
V^{-}\left( x\right) =\sup_{c\in \mathbb{R}}\left( c\varphi \left( x\right)
+\inf_{\sigma \leq \widehat{\tau}}E_{x}\left[ e^{-r\sigma}( \widehat{H}\left( X_{\sigma }\right)-c\varphi \left( X_{\sigma }\right) ) \right] \right) .
\label{step 3.7}
\end{equation}
The optimal stopping problem on the right hand side of (\ref{step 3.7}) can
be expressed as
\begin{eqnarray*}
R_{c}\left( x\right)  &:=&\inf_{\sigma \leq \widehat{\tau}}E_{x}\left[ e^{-r\sigma }
( \widehat{H}\left( X_{\sigma } ) -c\varphi \left( X_{\sigma }\right) \right) \right]  \\
&=&\inf_{a^{\prime }\leq y\leq x\leq z\leq b^{\prime}}\left(
\frac{\widehat{H}\left( y\right) -c\varphi \left( y\right) }{\psi \left(
y\right) }\frac{F\left( z\right) -F\left( x\right) }{F\left( z\right)
-F\left( y\right) }+\frac{\widehat{H}\left( z\right) -c\varphi \left(
z\right) }{\psi \left( z\right) }\frac{F\left( x\right) -F\left( y\right) }{%
F\left( z\right) -F\left( y\right) }\right) \psi \left( x\right)  \\
&\geq &\inf_{y\in \widehat{\mathcal{A}}_{x}}\left( \frac{\widehat{H}%
\left( y\right) -c\varphi \left( y\right) }{\psi \left( y\right) }\right)
\psi \left( x\right) .
\end{eqnarray*}
Hence, from (\ref{step 3.7}) we obtain
\begin{equation*}
V^{-}\left( x\right) \geq \sup_{c\in \mathbb{R}}\left( c\frac{\varphi \left( x\right)}{\psi(x)} +\inf_{y\in
\widehat{\mathcal{A}}_{x}}\left( \frac{\widehat{H}\left( y\right)
-c\varphi \left( y\right) }{\psi \left( y\right) }\right)  \right)\psi(x). 
\end{equation*}
Let
\begin{equation*}
\underline{c}(x)=\inf\{c\in\ \mathbb{R}\,\vert\,\inf\widehat{\mathcal{A}}_x = \inf\widehat{\mathcal{A}}_G^x(c)\}
\quad , \quad
\overline{c}(x)=\sup\{c\in\ \mathbb{R}\,\vert\,\sup\widehat{\mathcal{A}}_x = \sup\widehat{\mathcal{A}}_G^x(c)\} .
\end{equation*}
Recall that $\delta_c^{\ast}(x)$ and $\delta^{\ast}(x)$ were defined in (\ref{delta-star}) and (\ref{epsilon^ast}). 
Due to the definition of $\widehat{\mathcal{A}}_{x}$ it follows that for all $c\in \mathbb{R}$
\begin{equation*}
cF(x)+\inf_{y\in \widehat{\mathcal{A}}_{x}}\left( \frac{\widehat{H}(
y) -c\varphi(y) }{\psi(y) }\right) \leq \frac{H(x)}{\psi(x)} - \delta_c^{\ast}(F(x)) .
\end{equation*}
and equality holds for all $c\in [\underline{c}(x),\overline{c}(x)]$. 
In particular, it follows from the definition of $\widehat{\mathcal{A}}_x$ 
that $\delta_c^{\ast}(F(x))=\delta^{\ast}(F(x))$ for some $c\in [\underline{c}(x),\overline{c}(x)]$
so we may conclude that
\begin{equation*}
V^{-}(x)  \geq H(x)-\delta^{\ast }(F(x))\psi(x) = (W_{H})^G_{\ast \ast }(F(x))\psi(x) = (H)^G_{\ast \ast }(x)
\end{equation*}
which is the first inequality in (\ref{step 3.15}). The case that $b'=b$ can be handled by 
using an approximating sequence of domains as in Theorem \ref{Theorem natural boundaries}.

For the final inequality in (\ref{step 3.15}), let $\mathcal{A}_{x}=
\bigcup_{z\in \mathbb{Z}}\widehat{\mathcal{A}}_{x}^{H}\left( z\right)$
and let $\widehat{\sigma}=\inf \left\{ \left. t\geq 0\,\right\vert
\,X_{t}\notin \mathcal{A}_{x}\right\} $. Let $a'=\inf\mathcal{A}_x$ 
and $b'=\sup\mathcal{A}_x$ and suppose that $b'<b$.
There is a natural asymmetry in the 
value function defined in (\ref{game objective function}) as 
$R_{x}( \tau ,\tau) = E_{x}[ e^{-r\tau} G( X_{\tau})]$. Thus
\begin{eqnarray*}
\overline{V}\left( x\right)  &=&\inf_{\sigma }\sup_{\tau }E_{x}\left[ 
e^{-r\left( \tau \wedge \sigma \right) } \left(
G\left( X_{\tau }\right) \mathbb{I}_{\left[ \tau \leq \sigma \right]
}+H\left( X_{\sigma }\right) \mathbb{I}_{\left[ \tau >\sigma \right]
}\right) \right]  \\ &\leq &\sup_{\tau \leq \widehat{\sigma}}E_{x}\left[ e^{-r\tau }
G\left( X_{\tau}\right) \right] =:V^{+}\left( x\right) .
\end{eqnarray*}
A similar argument to that used above can be used to show that
\begin{equation*}
V^{+}(x)  \leq G(x)-\varepsilon^{\ast}(F(x))\psi(x) = (W^G)_{H}^{\ast \ast }(F(x))\psi(x) = (G)^{H}_{\ast \ast }(x). 
\end{equation*}
which is the final inequality in (\ref{step 3.15}). The case that $b^{\prime}=b$ can be handled by using an approximating sequence of domains as in Theorem \ref{Theorem  natural boundaries}.

The extension of $W^G$ and $W_H$ onto $\mathbb{R}_+$ mentioned at the beginning of proof is only necessary to exclude the case that $W^G(0+)=(W^G)_H^{\ast\ast}(0+)<(W_H)^G_{\ast\ast}(0+)=W_H(0+)$. 
When $(W^G)_H^{\ast\ast}(0+)<(W_H)^G_{\ast\ast}(0+)$ it follows from the definitions (\ref{F-concave H})-(\ref{F-convex G}) that $(G)_H^{\ast\ast}(a+)<(H)^G_{\ast\ast}(a+)$. However, when (\ref{boundary assumption}) holds $W^G(0+)=W_H(0+)$ so this case can not occur.
\end{proof}

\medskip

\begin{remark}
\label{remark integ}
In the previous Theorem the assumption (\ref{integ assumption}) has only been used
to rule out the degenerate case that $\limsup_{y\downarrow a}(G/\psi)(y)=+\infty$ 
(resp. $\limsup_{y\uparrow b}(G/\psi)(y)=+\infty$) as in this case the optimal stopping 
time for the maximising agent is not finite and $\overline{V}(x)=\underline{V}(x)=+\infty$
for all $x \in I$. In this case the functions $(G)_H^{\ast\ast}$ and $(H)^G_{\ast\ast}$
are no longer related to the value function of the optimal stopping game.
\end{remark}

In Theorem \ref{Theorem Game has value} the functions $H$ and $G$ are defined on the entire 
domain of the one dimensional diffusion $X$. Theorem \ref{Theorem Game has value} can handle 
the case that $X$ is absorbed upon exit from a set $[c,d]\in I$ by setting $G(x)=H(x)$ for all 
$x \in I\setminus [c,d]$. The value function in Theorem \ref{Theorem Game has value} can be thought of as an elastic cord which is tied to $\limsup_{y\downarrow 0} W^G(y)$ and pulled towards infinity between the two obstacles $W^G$ and $W_H$. On a compact domain it is the
shortest path between these two obstacles as shown in \cite{Peskir}. 

The next two lemmata examine some properties of the value of the game identified in Theorem \ref{Theorem Game has value} which will be used to prove that the game with upper- and lower-value (\ref{lower value})-(\ref{upper value}) has a Nash equilibrium. The first lemma show
that the value of the game is $r$-semiharmonic, that is $\left( G\right) _{H}^{\ast \ast }\in \mathrm{Sup}\left[
G,H\right) \cap \mathrm{Sub}\left( G,H\right] $ where
\begin{eqnarray*}
\mathrm{Sup}\left[ G,H\right) &=&\left\{ \left. U:I\rightarrow \left[ G,H%
\right] \,\right\vert \,U\text{ \textrm{is continuous and }}r\text{\textrm{-superharmonic on }}\left\{U<H\right\} \right\} , \\
\mathrm{Sub}\left( G,H\right] &=&\left\{ \left. U:I\rightarrow \left[ G,H\right] \,\right\vert \,U\text{ \textrm{is continuous and }}r\text{\textrm{-subharmonic on }}\left\{
U>G\right\} \right\} ,
\end{eqnarray*}
are the admissible sets in the dual problems defined in (\ref{dual problems}).
The second lemma shows that the $r$-harmonic functions $\left( G\right) _{H}^{\ast \ast }$ and $\left( H\right)_{\ast \ast }^{G}$ solve the dual problems (\ref{dual problems}).

\begin{lemma}
\label{lemma sets}The concave biconjugate of $W^G$ in the presence of the
upper barrier $W_H$ defined in (\ref{concave biconjugate H}) is such that the associated function $(G)_H^{\ast\ast}$ defined in (\ref{F-concave H}) is in the
admissible sets for the dual problems, i.e.
\begin{equation}
\left( G\right) _{H}^{\ast \ast }\in \mathrm{Sup}\left[ G,H\right) \cap
\mathrm{Sub}\left( G,H\right] .  \label{step 3.18}
\end{equation}
Moreover, the convex biconjugate of $W_H$ in the presence of the
lower barrier $W_G$ defined in (\ref{convex biconjugate G}) is such that the associated function $(H)^G_{\ast\ast}$ defined in (\ref{F-convex G}) is in the
admissible sets for the dual problems, i.e.
\begin{equation*}
\left( H\right) ^{G}_{\ast \ast }\in \mathrm{Sup}\left[ G,H\right) \cap
\mathrm{Sub}\left( G,H\right] .
\end{equation*}
\end{lemma}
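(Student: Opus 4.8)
The plan is to first collapse the two assertions into one. By Theorem \ref{Theorem duality} the modified biconjugates agree, $(W^G)_H^{\ast\ast}=(W_H)_{\ast\ast}^G$, so after the transformations recorded in (\ref{F-concave H})--(\ref{F-convex G}) the functions $(G)_H^{\ast\ast}$ and $(H)_{\ast\ast}^G$ are one and the same function on $I$, which by Theorem \ref{Theorem Game has value} equals the game value $V$. Hence the two memberships claimed in the lemma coincide, and it suffices to show $V\in\mathrm{Sup}[G,H)\cap\mathrm{Sub}(G,H]$. Since the common function is exactly $V$, the sets $\{V>V\}$ and $\{V<V\}$ are empty, so the clauses referring to $\{F>V\}$ and $\{F<V\}$ in the definitions of the two admissible classes hold vacuously. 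There remain four things to check: that $V$ takes values in $[G,H]$, that it is finely continuous, and the two one-sided harmonicity conditions, namely $r$-superharmonicity where $V<H$ and $r$-subharmonicity where $V>G$ (cf. the semiharmonic characterisation of \cite{Peskir2}).

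The range bound is immediate from Proposition \ref{proposition supsup} and Lemma \ref{lemma no gap}: writing $u:=(W^G)_H^{\ast\ast}$ we have $u=W^G+\varepsilon^{\ast}=W_H-\delta^{\ast}$ with $\varepsilon^{\ast},\delta^{\ast}\in[0,W_H-W^G]$, so $W^G\le u\le W_H$ and, transforming back, $G\le V\le H$ on $I$. For continuity I would read off from the two representations $u=W^G+\varepsilon^{\ast}=W_H-\delta^{\ast}$, together with the no-gap identity (\ref{no gap}) and the continuity of $G$ and $H$, that $u$ is sandwiched between continuous obstacles and cannot jump; continuity of $V=u\circ F\cdot\psi$ then follows from continuity of $F$ and $\psi$, exactly as in Proposition \ref{corollary smooth fit}, and ordinary continuity gives fine continuity.

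For the harmonicity conditions I would use the characterisation of \cite{Dyn} (Theorem 16.4): $U$ is $r$-superharmonic iff $U/\psi$ is $F$-concave and $r$-subharmonic iff $U/\psi$ is $F$-convex, localised to the relevant open set. Since $V/\psi=u\circ F$, and $F$-concavity (resp. $F$-convexity) of $u\circ F$ is ordinary concavity (resp. convexity) of $u$, the two conditions reduce to: $u$ is concave on $\{u<W_H\}$ and convex on $\{u>W^G\}$. By the coincidence-set identities at the end of Theorem \ref{Theorem duality}, $\{u>W^G\}=\{\varepsilon^{\ast}>0\}=\{\partial^{H}G=\emptyset\}$ and $\{u<W_H\}=\{\delta^{\ast}>0\}=\{\partial^{G}H=\emptyset\}$ are open. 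To prove convexity on $\{u>W^G\}$, fix $x_0$ there and let $c^{\ast}$ be a minimiser (or a limit of minimisers) of $c\mapsto\delta_c^{\ast}(x_0)$, realising $\delta^{\ast}(x_0)$; the line $m(y):=u(x_0)+c^{\ast}(y-x_0)$ then minorises $W_H$ on $[l_G^{x_0}(c^{\ast},u(x_0)),r_G^{x_0}(c^{\ast},u(x_0))]$, an interval containing $x_0$ in its interior because $u(x_0)>W^G(x_0)$. I claim $m\le u$ on that interval: if instead $u(x')<m(x')$ at some interior $x'$, then, since $m$ is a single straight line, using $m$ as a sub-tangent at $x'$ reproduces the same contacts with $W^G$ and the same domination property, so $c^{\ast}\in\partial^{G}_{\tilde\delta}H(x')$ with $\tilde\delta=W_H(x')-m(x')<W_H(x')-u(x')=\delta^{\ast}(x')$, contradicting the minimality defining $\delta^{\ast}(x')$. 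Thus $m$ is a supporting line from below at $x_0$, and since $x_0$ was arbitrary and $u$ is continuous, $u$ is convex on each component of $\{u>W^G\}$. The concavity of $u$ on $\{u<W_H\}$ follows by the symmetric argument, replacing $\delta^{\ast},W_H,\partial^{G}H$ by $\varepsilon^{\ast},W^G,\partial^{H}G$ and supporting from above.

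The main obstacle is the supporting-line step in the last paragraph: one must verify that the minimal-spike line genuinely dominates (resp. is dominated by) $u$ throughout its contact interval, which is where the minimality of $\delta_c^{\ast}$/$\varepsilon_c^{\ast}$, the dichotomy of Remark \ref{Remark properties of delta-star}, and --- crucially --- the hypothesis (\ref{stuck together}) via Lemma \ref{lemma no gap} enter; without (\ref{stuck together}) the contact structure can degenerate as in Remark \ref{Remark problem with gap} and the argument breaks. A secondary point is that the supporting line obtained is only local, so the passage from local convexity/concavity to convexity/concavity on a whole component relies on having established continuity of $u$ beforehand.
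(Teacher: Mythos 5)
Your route is genuinely different from the paper's in its key step. The paper localises to a component $(z_-(x),z_+(x))$ of $\{(G)_H^{\ast\ast}<H\}$, replaces $G$ there by a modified obstacle $\widetilde G$ pinned to $H$ at the two endpoints, and proves that the \emph{ordinary} concave biconjugate of the transformed $\widetilde W$ coincides with $(W^G)_H^{\ast\ast}$ on that interval (the identity $\widehat\varepsilon(z)=\varepsilon^{\ast}(z)$, which is the bulk of the work). Concavity and continuity then come for free from Rockafellar's results on the classical biconjugate, and the $\mathrm{Sub}$ membership is obtained, as in your proposal, by swapping the roles of $G$ and $H$ and invoking Theorem \ref{Theorem duality}. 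You instead build, at each point of $\{u>W^G\}$ (resp.\ $\{u<W_H\}$), a supporting line directly from a minimiser of $c\mapsto\delta^{\ast}_c$ (resp.\ $\varepsilon^{\ast}_c$), and the contradiction you derive from $u(x')<m(x')$ via $\delta_{c^{\ast}}^{\ast}(x')\le W_H(x')-m(x')<\delta^{\ast}(x')$ is correct: the contact interval of $m$ with $W^G$ seen from $x'$ is the same as seen from $x_0$, so $c^{\ast}\in\partial^{G}_{\tilde\delta}H(x')$ indeed follows. This buys you a shorter path to semiharmonicity that avoids the paper's case analysis (Figure \ref{figure lemma 22}), at the cost of having to supply continuity and the local-to-global convexity step yourself. (Minor points: you, like the paper's own proof, assume the infimum over $c$ is attained; and note that what is actually being verified is superharmonicity on $\{F<H\}$ and subharmonicity on $\{F>G\}$, which is what the paper's proof establishes too, the set descriptions in the displayed definitions of $\mathrm{Sup}[G,H)$ and $\mathrm{Sub}(G,H]$ notwithstanding.)

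The one genuine gap is continuity. Being sandwiched between the continuous obstacles $W^G$ and $W_H$ does not prevent $u$ from jumping at points where $W^G<W_H$, and the no-gap identity (\ref{no gap}) adds nothing here, so the sentence ``sandwiched between continuous obstacles and cannot jump'' is not a proof. This matters because you explicitly lean on continuity to pass from local supporting lines to convexity on a whole component, so as written the argument is circular at that joint. The repair is available from your own construction: at a point with $W^G(x_0)<u(x_0)<W_H(x_0)$ you have a supporting line from below \emph{and}, by the symmetric construction, one from above, both touching at $x_0$, and the squeeze gives continuity there; at a point where $u(x_0)=W^G(x_0)$ use $u\ge W^G$ for the lower bound and the supporting line from above for the upper bound (and symmetrically where $u(x_0)=W_H(x_0)$). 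So the order of argument should be: two-sided local supporting lines first, then continuity, then the standard local-to-global convexity lemma. With that reordering the proposal is a complete and valid alternative to the paper's proof.
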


\begin{proof}
First we show that $\left( G\right) _{H}^{\ast \ast }\in \mathrm{%
Sup} \left[ G,H\right)$. Take $x\in I$ such that $(G)_{H}^{\ast \ast}(x) <H(x)$ and let
\begin{equation}
z_{-}\left( x\right) :=\sup \left\{ \left. y\leq x\,\right\vert \,\left(
G\right) _{H}^{\ast \ast }\left( y\right) =H\left( y\right) \right\} \quad
;\quad z_{+}\left( x\right) :=\inf \left\{ \left. y\geq x\,\right\vert
\,\left( G\right) _{H}^{\ast \ast }\left( y\right) =H\left( y\right)
\right\} .
\label{define z}
\end{equation}
Let $\widetilde{G}(y):=G(y) \mathbb{I}_{(z_{-}(x)
,z_{+}(x))}+H(x)\mathbb{I}_{[y=z_{-}(x)] \cup [y=z_{+}(x)]}$
and define a function $\widetilde{W}$ using
\begin{equation*}
\widetilde{W}(y,x) :=\left(\frac{\widetilde{G}}{\psi}\right) \circ F^{-1}(y) .
\end{equation*}
For $z\in [F(z_-(x)),F(z_+(x))]=:J^x$ and $c\in \mathbb{R}$ let
\begin{equation*}
\widehat{\varepsilon}_{c}(z) :=\inf \left\{ \varepsilon \geq
0\,\left\vert \, \widetilde{W}(y,x)\leq W^G(z)+\varepsilon +c(y-z) \quad \forall y\in J^x\right. \right\} .
\end{equation*}
For fixed $z\in J^x$, $c\mapsto \widehat{\varepsilon}_{c}(z)$ this continuous. Define $\widehat{\varepsilon}(z):=\inf_{c\in \mathbb{R}}\widehat{\varepsilon}_{c}(z)$ which is attained at some $\widehat{c}\in \mathbb{R}$. We claim that $\widehat{\varepsilon }(z)=\varepsilon^{\ast}(z)$ for all $z\in J^x$ where $\varepsilon^{\ast }(z)$ is defined in (\ref{epsilon-star}). To
prove this first suppose that
\begin{equation}
\widehat{\varepsilon }(z) =\widehat{\varepsilon}_{\widehat{c}}(z) 
<\inf_{c\in \mathbb{R}}\varepsilon _{c}^{\ast }(z)=\varepsilon^*(z) .   \label{claim}
\end{equation}
By definition
\begin{equation*}
h(y,z) := W^G(z)+ \widehat{\varepsilon}(z)+\widehat{c}(y-z) \geq \widetilde{W}(y,x) \qquad \forall y\in J^x
\end{equation*}
and in particular $h(F(z_{-}(x)),F(x)) \psi(z_{-}(x)) \geq H(z_{-}(x))$ and
$h(F(z_{+}(x)),F(x)) \psi (z_{+}(x)) \geq H(z_{+}(x))$. Thus the continuity of $%
y\mapsto \left( H/\psi \right) \left( y\right) $ implies that for all $z \in J^x$
\begin{equation*}
[ l_{H}^{z}(\widehat{c},W^G(z) +\widehat{\varepsilon}(z)) ,r_{H}^{z}(\widehat{c},W^G(x)+
\widehat{\varepsilon}(z))] \subseteq [z_{-}(x),z_{+}(x)] =J^x   
\end{equation*}
hence $\widehat{c}\in \partial^{H}G_{\widehat{\varepsilon}(z)}(z)$ which 
contradicts (\ref{claim}) so $\widehat{\varepsilon }(z) \geq \varepsilon^{\ast }(z)$ for all $z \in J^x$.

Suppose that $\widehat{\varepsilon }(z)>\varepsilon^{\ast}(z)$ and take $c^{\prime}\in
\partial^{H}_{\varepsilon^{\ast}(z)}G(z)$ so that by assumption
\begin{equation*}
h(y):=W^G(z) + \varepsilon^{\ast}(z)+c^{\prime}(y-z) < W^G(y)
\end{equation*}
for some $y\in J^x \setminus [ l_{H}^{z}(\widehat{c},W^G(z) +\varepsilon^{\ast}(z)) ,r_{H}^{z}(\widehat{c},W^G(x)+
\varepsilon^{\ast}(z))]$. Let $\hat{c}= \inf\{c\in\mathbb{R}\,\vert\,c \in\partial^G H(F(z_+(x)))\}$
then either: (i)
\begin{equation*}
W_H(r_H^z(c^{\prime},W^G(z)+\varepsilon^{\ast}(z))) \geq W_H(F(z_+(x))) + 
\hat{c}(r_H^z(c^{\prime},W^G(z)+\varepsilon^{\ast}(z))-F(z_+(x)))
\end{equation*}
which implies that 
\begin{equation*}
h(y)>W_H(F(z_+(x))) + \hat{c}(y+\varepsilon^{\ast}(z))-F(z_+(x))) \geq W^G(y)
\end{equation*} 
for all $y \in [r_H^z(c^{\prime},W^G(z)+\varepsilon^{\ast}(z)),F(z_{+}(z))]$; or (ii) there exists $\hat{y}\in [r_H^z(c^{\prime},W^G(z)+\varepsilon^{\ast}(z)),F(z_{+}(z))]$ and $b\in \mathbb{R}$ such that 
\begin{equation*}
r_H^z(c^{\prime},W^G(z)+\varepsilon^{\ast}(z)) = l_H^{\hat{y}}(b,W^G(\hat{y}))
\end{equation*}
and hence $h(y)>W^G(\hat{y})+b(y-\hat{y})\geq W^G(y)$ for all $y \in [r_H^z(c^{\prime},W^G(z)+\varepsilon^{\ast}(z)),F(z_{+}(z))]$. 
These two cases are illustrated in Figure \ref{figure lemma 22}. We may conclude that $h(y)\geq W^G(y)$
for all $y\in [r_H^z(c^{\prime},W^G(z)+\varepsilon^{\ast}(z)),F(z_{+}(z))]$. A similar argument 
can be used to show that $h(y)\geq W^G(y)$ for all $y\in [F(z_{-}(z)),l_H^z(c^{\prime},W^G(z)+\varepsilon^{\ast}(z))]$. 
We may conclude that $\widehat{\varepsilon }(z)=\varepsilon^{\ast}(z)$ for all $z\in J^x$.

\begin{figure}[h!]
\begin{center}
\begin{tabular}{cc}
\includegraphics[width=7cm]{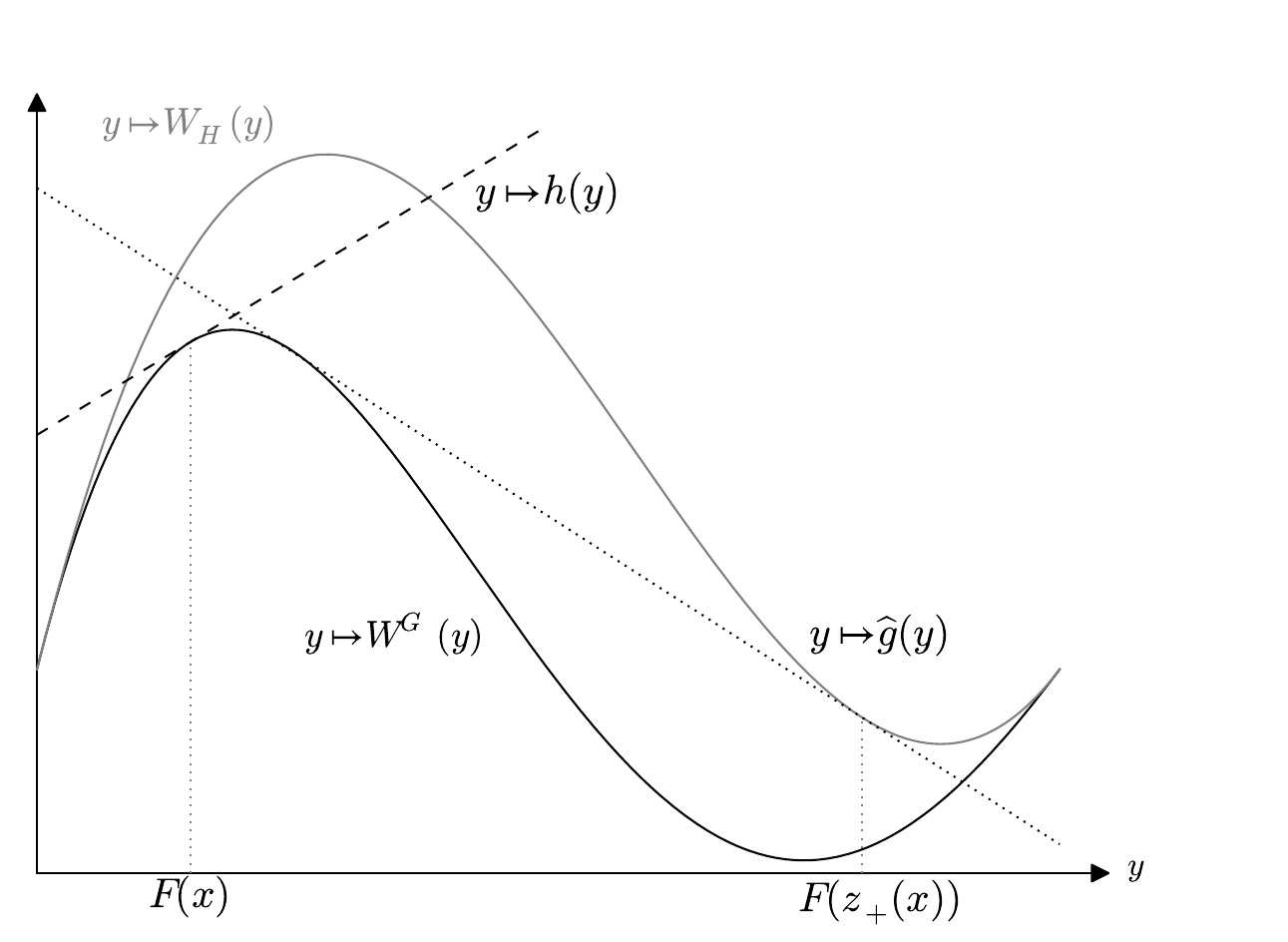} & \includegraphics[width=7cm]{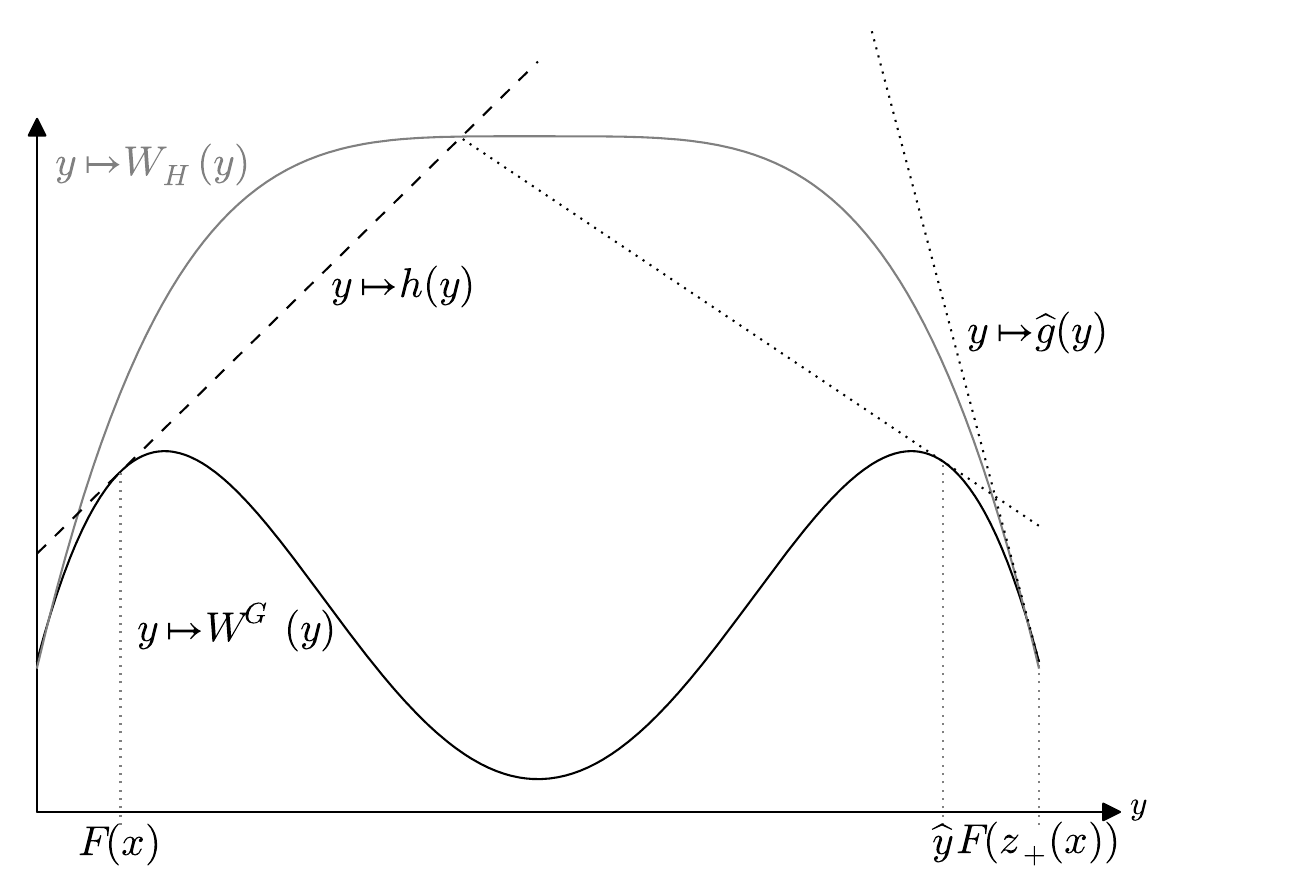}
\end{tabular} 

\begin{minipage}{0.7\textwidth}
\caption[Figure illustrating proof that $\widehat{\varepsilon}(x)=\varepsilon^{\ast}(x)$]
{In the figure on the left the line $h$ intercepts $W_H$ at a point above the line $\widehat{g}$ whereas in the the figure on the right this is not the case. However, in the figure on the right $h$ intercepts $W_H$ at the same point as the tangent to $W^G$ at $\widehat{y}$ intercepts $W_H$.
\label{figure lemma 22}} 
\end{minipage}
\end{center}
\end{figure}

Using Lemma \ref{lemma constructive}, for all $z\in J^x$ the concave biconjugate of $\widetilde{W}$ may be written as
\begin{eqnarray*}
\widetilde{W}_{\ast \ast}( z;x) 
&=& \inf_{c\in \mathbb{R}}\left( W^G(z) +\widehat{\varepsilon}_{c}(z)\right) \\
&=& W^G(z) +\varepsilon^{\ast}(z)=(W^G)_{H}^{\ast\ast}(z) .
\end{eqnarray*}
By definition $z\mapsto \widetilde{W}_{\ast \ast}( z;x) $ is concave on $J^x$ which
implies that $x\mapsto (G)_{H}^{\ast \ast}(x)$ is
$F$-concave on $[z_{-}(x),z_{+}(x)]$. Consequently, $(G)_{H}^{\ast \ast}$ 
is $F$-superharmonic on $[z_{-}(x),z_{+}(x)] $ and since $x$ was arbitrarily chosen 
$(G)_{H}^{\ast \ast }$ is $F$-superharmonic on each connected section of $\{
x\in I\,\vert\,(G)_{H}^{\ast \ast }(x)<H(x)\} $. The concave biconjugate $y\mapsto
\widetilde{W}_{\ast \ast }(y;x) $ is continuous on $J^x$ (see \cite{Rock} Theorem 10.1) and
\begin{equation*}
\widetilde{W}_{\ast \ast }(F(z_{-}(x);x)\psi( z_{-}(x)) = H(z_{-}(x)) \quad , \quad 
\widetilde{W}_{\ast \ast }(F(z_{+}(x);x)\psi( z_{+}(x)) = H(z_{+}(x)) 
\end{equation*}
so the continuity of $x\mapsto H(x)$ implies that $x\mapsto
(G)_{H}^{\ast \ast}(x) $ is continuous. We conclude that $(G)_{H}^{\ast \ast }\in \mathrm{Sup}\left[ G,H\right)$ as required.

Let $\widetilde{H}(x)=H(x)\mathbb{I}_{(a,b)}+G(x)
\mathbb{I}_{[x=a]\cup[x=b]}$ then by reversing the roles of $G$ and $H$ and replacing $H$ with $\widetilde{H}$
in the arguments above it can be shown that 
$(\widetilde{H})^{G}_{\ast \ast }\in \mathrm{Sub}( G,\widetilde{H}]
\subset \mathrm{Sub}\left( G,H\right]$ .
The first part of the lemma then follows from Theorem \ref{Theorem duality} as
$(\widetilde{H})^{G}_{\ast \ast }=\left( G\right) _{\widetilde{H}}^{\ast \ast }
=\left( G\right)_{H}^{\ast \ast }$.
The second part can be derived using a symmetric argument.
\end{proof}

\medskip
 
As $\left( G\right) _{H}^{\ast \ast }\in \mathrm{Sup}\left( G,H\right] $
from the definition of the dual problems $\hat{V}$ $\leq \left( G\right)
_{H}^{\ast \ast }$ and $\left( H\right) _{\ast \ast }^{G}\in \mathrm{Sub}%
\left[ G,H\right) $ implies $\left( H\right) _{\ast \ast }^{G}\leq \check{V}$. The next lemma shows that the reverse inequalities hold and in particular when (\ref{integ assumption}) and (\ref{boundary assumption}) hold the joint value of the dual problems is $\left(G\right) _{H}^{\ast\ast}$.

\begin{lemma}
\label{lemma coincides with dual}The concave biconjugate of $G$ in the
presence of the upper barrier $H$ defined in (\ref{F-concave H})
coincides with the value of one of the dual problems defined in (\ref{dual
problems}) and the convex biconjugate of $H$ in the
presence of the lower barrier $G$ defined in (\ref{F-convex G})
coincides with the other dual problem, i.e.
\begin{equation*}
\left( G\right)_{H}^{\ast \ast }=\hat{V} \qquad , \qquad
\left( H\right)^{G}_{\ast \ast }=\check{V}.
\end{equation*}
\end{lemma}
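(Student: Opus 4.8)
The plan is to upgrade the membership relations already recorded before the statement into genuine equalities. Lemma~\ref{lemma sets} gives $(G)_H^{\ast\ast}\in\mathrm{Sup}[G,H)$ and $(H)^G_{\ast\ast}\in\mathrm{Sub}(G,H]$, and since $\hat V$ is an infimum and $\check V$ a supremum over these admissible sets this immediately yields $\hat V\le (G)_H^{\ast\ast}$ and $(H)^G_{\ast\ast}\le\check V$. It therefore remains to prove the reverse inequalities, i.e. that $(G)_H^{\ast\ast}$ minorises every member of $\mathrm{Sup}[G,H)$ and that $(H)^G_{\ast\ast}$ majorises every member of $\mathrm{Sub}(G,H]$. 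By the evident symmetry between the two sets (exchange $G\leftrightarrow H$, $\sup\leftrightarrow\inf$, $r$-super $\leftrightarrow$ $r$-sub, and reverse the inequalities) I would prove the first claim in full and obtain the second by the mirror argument.

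To show $(G)_H^{\ast\ast}\le U$ for an arbitrary $U\in\mathrm{Sup}[G,H)$ I would exploit the identity $V=(G)_H^{\ast\ast}=\underline V$ furnished by Theorem~\ref{Theorem Game has value}, together with a comparison against a distinguished response of the infimising player. Fix such a $U$ and let $\sigma_U:=\inf\{t\ge 0\,\vert\,U(X_t)=H(X_t)\}$ be the first time $U$ meets its upper barrier. For an arbitrary stopping time $\tau$ I would bound the running payoff pathwise: on $\{\tau\le\sigma_U\}$ the game pays $G(X_\tau)\le U(X_\tau)$ because $U\ge G$, while on $\{\tau>\sigma_U\}$ it pays $H(X_{\sigma_U})=U(X_{\sigma_U})$ by the definition of $\sigma_U$, so that $R_x(\tau,\sigma_U)\le E_x[U(X_{\tau\wedge\sigma_U})e^{-r(\tau\wedge\sigma_U)}]$. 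The key analytic input is that $(e^{-r(t\wedge\sigma_U)}U(X_{t\wedge\sigma_U}))_{t\ge 0}$ is a $P_x$-supermartingale, which follows from the $r$-superharmonicity of $U$ on the finely open set traversed before $\sigma_U$; optional sampling then gives $E_x[U(X_{\tau\wedge\sigma_U})e^{-r(\tau\wedge\sigma_U)}]\le U(x)$ and hence $R_x(\tau,\sigma_U)\le U(x)$ for every $\tau$. Taking the supremum over $\tau$ and using $\underline V(x)=\sup_\tau\inf_\sigma R_x(\tau,\sigma)\le\sup_\tau R_x(\tau,\sigma_U)\le U(x)$ yields $(G)_H^{\ast\ast}(x)=V(x)=\underline V(x)\le U(x)$, as required.

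A purely analytic route that avoids the stochastic representation is also available and, in the spirit of the paper, perhaps preferable: transporting $U$ to the coordinate $W_U:=(U/\psi)\circ F^{-1}$, membership in $\mathrm{Sup}[G,H)$ becomes $W^G\le W_U\le W_H$ with $W_U$ concave on the relevant superharmonicity set, while Proposition~\ref{proposition supsup} and Theorem~\ref{Theorem duality} identify $(W^G)^{\ast\ast}_H$ as the smallest function lying above $W^G$, below $W_H$, and concave on $\{(W^G)^{\ast\ast}_H<W_H\}$. Assuming $W_U<(W^G)^{\ast\ast}_H$ on some maximal open interval $(\alpha,\beta)$ at whose endpoints the two functions agree, one would compare $W_U$ with the obstacle-problem solution $(W^G)^{\ast\ast}_H$ on $(\alpha,\beta)$ and contradict the minimality built into the modified concave biconjugate.

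The main obstacle is the justification of the supermartingale property of $(e^{-r(t\wedge\sigma_U)}U(X_{t\wedge\sigma_U}))$ — equivalently, in the analytic route, the reconciliation of the concavity set in the definition of $\mathrm{Sup}[G,H)$ with the set $\{(W^G)^{\ast\ast}_H<W_H\}$ on which the biconjugate is concave. The delicate point is the behaviour on the contact set $\{U=G\}$, where $U$ is not assumed superharmonic, and at the natural boundaries; it is precisely here that the auxiliary requirement involving $\{U>V\}$ in the definition of the admissible set does its work, ruling out degenerate competitors (such as $U=G$ itself) and underpinning the supermartingale property up to $\sigma_U$. Controlling this, together with carrying out the symmetric argument to obtain $\check V\le (H)^G_{\ast\ast}$ via the supremising player's response $\tau_U:=\inf\{t\ge 0\,\vert\,U(X_t)=G(X_t)\}$, is where the real effort lies. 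Once both reverse inequalities are in hand, combining them with the membership bounds gives $(G)_H^{\ast\ast}=\hat V$ and $(H)^G_{\ast\ast}=\check V$.
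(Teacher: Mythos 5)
Your first paragraph reproduces the paper's own easy direction: Lemma \ref{lemma sets} together with the definitions of $\hat V$ and $\check V$ gives $\hat V\le (G)_H^{\ast\ast}$ and $(H)^G_{\ast\ast}\le\check V$, and the entire content of the lemma is the reverse pair of inequalities. For those you and the paper part ways, and your main (probabilistic) route has a genuine gap exactly where you yourself locate ``the real effort''. The bound $\sup_\tau R_x(\tau,\sigma_U)\le U(x)$ hinges on $\bigl(e^{-r(t\wedge\sigma_U)}U(X_{t\wedge\sigma_U})\bigr)_{t\ge0}$ being a $P_x$-supermartingale, which would require $U$ to be $r$-superharmonic on essentially all of $\{U<H\}$; membership in $\mathrm{Sup}\left[G,H\right)$ only gives superharmonicity on $\{U>G\}$ and $\{U>V\}$, which says nothing on $\{U=G\}\cap\{U<H\}$. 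Your proposed rescue --- that the clause involving $\{U>V\}$ ``rules out degenerate competitors such as $U=G$'' --- does not work as stated: for $U=G$ one has $\{U>G\}=\{U>V\}=\emptyset$ (since $V=(G)_H^{\ast\ast}\ge G$), so both superharmonicity requirements are vacuous and $U=G$ is not excluded by them. The step you defer is therefore not a technicality; it is the whole difficulty, and you have not supplied an argument for it. (Your appeal to Theorem \ref{Theorem Game has value} for $V=\underline V=(G)_H^{\ast\ast}$ is legitimate and not circular, since that theorem is proved before and independently of this lemma.)

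The paper's actual proof is the analytic contradiction your third paragraph gestures at but does not carry out. It transports a hypothetical competitor $f$ with $f(x)=W^G(x)+\varepsilon$, $\varepsilon<\varepsilon^{\ast}(x)$, into the $F$-coordinate, notes that $f$ is concave on the interval $[l_f(x),r_f(x)]$ between its contact points with $W^G$, takes a tangent to $f$ at $x$ with slope $c'$, and uses the emptiness of $\partial^{H}_{\varepsilon}G(x)$ for $\varepsilon<\varepsilon^{\ast}(x)$ (which is exactly what the definitions (\ref{epsilon-star}) and (\ref{epsilon^ast}) encode) to force that tangent, shifted up by a small $\eta>0$, to meet $W^G\le f$ again inside the concavity interval --- contradicting concavity of $f$ there. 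That tangent-line mechanism is the concrete content missing from your sketch: ``contradict the minimality built into the modified concave biconjugate'' restates the goal rather than proving it. It also keeps the argument inside convex analysis, which is the declared purpose of the paper; if you insist on the probabilistic route you must first derive the supermartingale property up to $\sigma_U$ from the stated admissibility conditions, which is precisely the probabilistic input the paper is structured to avoid.
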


\begin{proof}
For the first statement it is sufficient to show that $\hat{V} \geq \left( G\right)_{H}^{\ast \ast }$. 
Fix $x'\in I$ such that $(G)_{H}^{\ast \ast }(x') <H(x')$ and suppose 
that $\hat{V}(x')<(G)_{H}^{\ast \ast }(x')=(W^G)_{H}^{\ast \ast }(F(x'))\psi(x')$. 
Define another set 
\begin{align*}
\widetilde{\mathrm{S}}\mathrm{up}[G,H) &:= \left\{ U:\mathbb{R}_+ \rightarrow [W^G,W_H]\,\vert\, U\,\mathrm{is}\,\mathrm{continuous}\,\mathrm{and}\,\mathrm{concave}\,\mathrm{on}\,\{U<W_H\}\right\}
\end{align*}
Let $x=F(x')$ then by assumption there exists $f \in \widetilde{\mathrm{S}}\mathrm{up}[G,H)$ 
such that $f(x)=W^G(x)+\varepsilon$ for some $\varepsilon <\varepsilon ^{\ast}(x)$ 
where $\varepsilon^{\ast}$ is as defined in (\ref{epsilon-star}). Let
\begin{equation*}
l_{f}(x) :=\sup \{ y\leq x\,\vert \,f(y) =W^G(y)\} \qquad ,\qquad  
r_{f}(x) :=\inf \{ y\geq x\,\vert \,f(y) =W^G(y)\}
\end{equation*}
so that the function $f$ is concave on $[ l_{f}(x),r_{f}(x)]$. A
tangent to $f$ at $x$ can be expressed as
\begin{equation*}
u(y;x) :=f(x)+c^{\prime}(y-x) \quad \mathrm{for} \quad c^{\prime}\in \left[ \frac{d^{+}}{dy}f(x),\frac{d^{-}}{dy}f(x)\right].
\end{equation*}
Consider
\begin{equation*}
l_{\eta}^{\prime}:= l_{G}^{x}(c^{\prime},f(x)+\eta) \vee l_{H}^{x}(c^{\prime},f(x)+\eta) \quad ,\quad
r_{\eta}^{\prime}:= r_{G}^{x}(c^{\prime},f(x)+\eta) \wedge r_{H}^{x}(c^{\prime},f(x)+\eta).
\end{equation*}
As $\partial^{H}f(x) \subseteq \partial^{H}_{\varepsilon}G(x) =\emptyset$ 
it follows that either $l_0^{\prime}=l_{G}^{x}(c^{\prime},f(x))$ and/or $r_0^{\prime}=
r_{G}^{x}(c^{\prime},f(x))$. Consequently, for sufficiently small $\eta >0$, either
$l_{\eta}^{\prime}=l_{G}^{x}(c^{\prime},f(x)+\eta)$ and/or $r_{\eta}^{\prime}=r_{G}^{x}(c^{\prime},f(x)+\eta)$.
Thus $f(y) =f(x) +\eta +c^{\prime}(y-x)$ for some $y\in [ l_{f}(x) ,r_f(x)] $ which
contradicts that $f$ is concave on $[ l_{f}(x),r_{f}(x)]$. 
We may conclude that it is not possible to construct a function $f\in \widetilde{\mathrm{S}}\mathrm{up}(G,H]$
passing through $p\in[W^G(x),W^G(x)+\varepsilon^{\ast}(x))$ or equivalently, it is not possible to construct
a function $\tilde{f}\in \mathrm{Sup}(G,H]$ passing though $\tilde{p}\in[G(x'),G(x')+\varepsilon^{\ast}(F(x')))$
for arbitrary $x'\in I$. Hence $(G)_{H}^{\ast \ast }\geq \hat{V}$ and a symmetric argument can be used to show 
that assuming $(H)_{\ast \ast}^{G}<\check{V}$ leads to a similar contradiction.
\end{proof}

\medskip

The next theorem is the main result in this section and shows when
(\ref{integ assumption}) and (\ref{boundary assumption}) hold, $\hat{V}=\check{V}$
implies the existence of a Nash equilibrium in the optimal stopping game defined in (\ref{lower value})-(\ref{upper value}).
As such it is a purely analytical version of one direction of Theorem 2.1 in \cite{Peskir2}.
Take
\begin{equation}
\mathcal{D}^{+}:=\left\{ \left. x\in I\,\right\vert \,\partial ^{H}G\left(
F(x)\right) \neq \emptyset \right\} \quad ,\quad  \mathcal{D}^{-}:=\left\{
\left. x\in I\,\right\vert \,\partial^{G}H\left( F(x)\right) \neq \emptyset
\right\}
\label{stopping regions}
\end{equation}
as the candidate stopping regions.

\begin{theorem}
\label{Theorem Nash}
Let
\begin{equation}
\tau^{\ast}=\inf \{ t\geq 0\,\vert\,X_{t}\in \mathcal{D}^{+}\} 
\quad ,\quad \sigma^{\ast}=\inf\{ t\geq
0\,\vert \,X_{t}\in \mathcal{D}^{-}\} \label{stopping times}
\end{equation}
and suppose that assumptions (\ref{integ assumption}) and (\ref{boundary assumption}) hold.
The optimal stopping game (\ref{lower value})-(\ref{upper value})
has a Nash equilibrium and $(\tau ^{\ast },\sigma ^{\ast })$ is a saddle point,
i.e. for any $\tau$, $\sigma$
\begin{equation}
R_{x}\left( \tau ,\sigma ^{\ast }\right) \leq R_{x}\left( \tau ^{\ast
},\sigma ^{\ast }\right) \leq R_{x}\left( \tau ^{\ast },\sigma \right) .
\label{saddle point}
\end{equation}
for all $x\in I$.
\end{theorem}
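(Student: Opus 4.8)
The plan is to run a direct verification argument. From Theorem \ref{Theorem Game has value} the common value is $V:=(G)_H^{\ast\ast}=(H)_{\ast\ast}^G$, and the first task is to record its harmonicity structure. The proof of Lemma \ref{lemma sets} shows that $V/\psi$ is $F$-concave on each connected component of $\{V<H\}$ and $F$-convex on each component of $\{V>G\}$; by the characterisation recalled after (\ref{F function}) this means $V$ is $r$-superharmonic on $\{V<H\}$, $r$-subharmonic on $\{V>G\}$, and $r$-harmonic on $\mathcal{C}:=\{G<V<H\}$. The final display of Theorem \ref{Theorem duality} identifies the contact sets as $\mathcal{D}^{+}=\{x\in I\,\vert\,V(x)=G(x)\}$ and $\mathcal{D}^{-}=\{x\in I\,\vert\,V(x)=H(x)\}$, so by continuity of $V$ one has $[0,\sigma^{\ast})\subseteq\{V<H\}$ and $[0,\tau^{\ast})\subseteq\{V>G\}$. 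Setting $M_{t}:=e^{-rt}V(X_{t})$, it follows that $(M_{t\wedge\sigma^{\ast}})_{t\ge0}$ is a supermartingale and $(M_{t\wedge\tau^{\ast}})_{t\ge0}$ is a submartingale; since $G\le V\le H$, the integrability hypothesis (\ref{integ assumption}) makes these stopped families uniformly integrable, so optional sampling applies at the (possibly infinite) stopping times below.

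For the first inequality in (\ref{saddle point}) I would fix $\sigma^{\ast}$ and take an arbitrary $\tau$. On $\{\tau\le\sigma^{\ast}\}$ use $G\le V$, and on $\{\tau>\sigma^{\ast}\}$ use $H(X_{\sigma^{\ast}})=V(X_{\sigma^{\ast}})$ (valid because $X_{\sigma^{\ast}}\in\mathcal{D}^{-}$), to obtain
\begin{equation*}
R_{x}(\tau,\sigma^{\ast}) \le E_{x}\left[ V(X_{\tau})\mathbb{I}_{[\tau\le\sigma^{\ast}]}e^{-r\tau} + V(X_{\sigma^{\ast}})\mathbb{I}_{[\tau>\sigma^{\ast}]}e^{-r\sigma^{\ast}} \right] = E_{x}\left[ M_{\tau\wedge\sigma^{\ast}} \right] \le V(x),
\end{equation*}
where the last step is the supermartingale property of $(M_{t\wedge\sigma^{\ast}})$ applied to the stopping time $\tau\wedge\sigma^{\ast}\le\sigma^{\ast}$. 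The second inequality is symmetric: fixing $\tau^{\ast}$ and an arbitrary $\sigma$, use $G(X_{\tau^{\ast}})=V(X_{\tau^{\ast}})$ on $\{\tau^{\ast}\le\sigma\}$ (since $X_{\tau^{\ast}}\in\mathcal{D}^{+}$) and $H\ge V$ on $\{\tau^{\ast}>\sigma\}$ to get
\begin{equation*}
R_{x}(\tau^{\ast},\sigma) \ge E_{x}\left[ M_{\tau^{\ast}\wedge\sigma} \right] \ge V(x),
\end{equation*}
the last step being the submartingale property of $(M_{t\wedge\tau^{\ast}})$. Specialising to $\tau=\tau^{\ast}$ and $\sigma=\sigma^{\ast}$ forces $R_{x}(\tau^{\ast},\sigma^{\ast})=V(x)$, so the two displays give exactly (\ref{saddle point}). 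Finally, Lemma \ref{lemma coincides with dual} gives $\hat V=(G)_{H}^{\ast\ast}=V=(H)_{\ast\ast}^{G}=\check V$ under (\ref{integ assumption})--(\ref{boundary assumption}); thus the common value of the dual problems equals the value of the game, and the pair $(\tau^{\ast},\sigma^{\ast})$ constructed from $\mathcal{D}^{+}$ and $\mathcal{D}^{-}$ is a genuine saddle point, i.e. a Nash equilibrium exists.

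I expect the main obstacle to be the justification of the (super/sub)martingale bounds at the endpoints rather than the algebra of the two displays. Three points need care: that $(M_{t\wedge\sigma^{\ast}})$ and $(M_{t\wedge\tau^{\ast}})$ are honest uniformly integrable (super/sub)martingales so that optional sampling is licit even on $\{\tau\wedge\sigma=+\infty\}$, which I would derive from $G\le V\le H$ and (\ref{integ assumption}); that the contributions on the event of non-stopping are well defined, for which (\ref{boundary assumption}) ensures $\lim_{t\to\infty}G(X_{t})e^{-rt}=\lim_{t\to\infty}H(X_{t})e^{-rt}$ so the terminal payoff is unambiguous; and the natural boundaries, where a component of $\{V<H\}$ (resp. $\{V>G\}$) abuts $a$ or $b$, which I would treat by the approximating-domain device already used in Theorem \ref{Theorem natural boundaries}, passing to the limit by uniform integrability. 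The tie convention $\tau=\sigma$ assigning the payoff $G$ is harmless, since at any point of $\mathcal{D}^{+}\cap\mathcal{D}^{-}$ one has $G=V=H$, so the value of $M$ at $\tau^{\ast}\wedge\sigma^{\ast}$ is unaffected by which rule is invoked.
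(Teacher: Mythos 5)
Your argument is correct in substance, but it is a genuinely different proof from the one in the paper. You run the classical probabilistic verification: from Theorem \ref{Theorem Game has value}, Lemma \ref{lemma sets} and Theorem \ref{Theorem duality} you extract that $V=(G)_H^{\ast\ast}=(H)_{\ast\ast}^G$ is $r$-superharmonic on $\{V<H\}$ and $r$-subharmonic on $\{V>G\}$ with contact sets $\mathcal{D}^{+}=\{V=G\}$ and $\mathcal{D}^{-}=\{V=H\}$, and then conclude via the supermartingale property of $(e^{-r(t\wedge\sigma^{\ast})}V(X_{t\wedge\sigma^{\ast}}))_{t\geq 0}$, the submartingale property of $(e^{-r(t\wedge\tau^{\ast})}V(X_{t\wedge\tau^{\ast}}))_{t\geq 0}$, optional sampling and uniform integrability. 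This is essentially the route of \cite{Peskir2} and \cite{EP}. The paper instead stays inside the convex-analytic framework it has built: it takes a competitor strategy of the two-sided exit form $\sigma=T_{z_{+}}\wedge T_{z_{-}}$, splits into cases according to the overlap $A_x=[x_{+},x_{-}]\cap[z_{+},z_{-}]$, and compares explicit $F$-linear chords $h$, $g$, $f$ (evaluated through the Laplace transforms (\ref{laplace})) against the modified biconjugates, so that no optional sampling for general stopping times is ever invoked. What your approach buys is uniformity: the two displayed bounds hold for \emph{arbitrary} stopping times $\tau$ and $\sigma$ in one stroke, whereas the paper's case analysis is phrased only for exit times from intervals. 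What it costs is precisely the point of the paper: the stated aim is to obtain the semiharmonic characterisation and the Nash equilibrium ``using only results from convex analysis,'' and your proof reintroduces the martingale machinery that the analytical construction was designed to replace. The technical caveats you flag (uniform integrability from $G\leq V\leq H$ and (\ref{integ assumption}), the terminal value on $\{\tau\wedge\sigma=+\infty\}$ via (\ref{boundary assumption}), the approximating-domain device near the natural boundaries, and the harmlessness of the tie convention) are exactly the right ones, and together with Lemma \ref{lemma coincides with dual} your identification $\hat V=V=\check V$ completes the Nash statement; so the proof stands, it is simply not the paper's proof.
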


\begin{proof}
When (\ref{integ assumption}) and (\ref{boundary assumption}) hold, it was
shown in Theorem \ref{Theorem Game has value} that the
optimal stopping game has a value and $V=(G)_{H}^{\ast \ast}=(H)^G_{\ast\ast}$. The 
candidate stopping regions (\ref{stopping regions}) suggest the following candidates for the optimal stopping times
\begin{equation*}
\tau^{\ast}=\inf\{t\geq 0\,\vert\,(G)_H^{\ast\ast}(X_t)=G(X_t)\} \quad , \quad
\sigma^{\ast}=\inf\{t\geq 0\,\vert\,(G)_H^{\ast\ast}(X_t)=H(X_t)\} .
\end{equation*}
It follows from Lemmata \ref{lemma sets} and \ref{lemma coincides with dual} 
that $V=\check{V}=\hat{V}$ is $r$-harmonic on 
$I\setminus (\mathcal{D}^+ \cup \mathcal{D}^-)$ and $(G)_{H}^{\ast \ast}(x)
=R_{x}(\tau^{\ast},\sigma^{\ast})$. Let
\begin{equation}
x_{+}=\inf\{ z\geq x\,\vert\,\partial^{H}G(F(z)) \neq \emptyset \} \quad ,\quad x_{-}=\sup \{ 
y\leq x\,\vert\,\partial^{H}G( F(y)) \neq \emptyset \} \label{tau interval}
\end{equation}
and
\begin{equation}
y_{+}=\inf \{ z\geq x\,\vert\,\partial^{G}H(F(z)) \neq \emptyset\} \quad ,\quad 
y_{-}=\sup \{ y\leq x\,\vert \,\partial^{G}H(F(y)) \neq \emptyset \} . \label{sigma interval}
\end{equation}
so that $\tau^{\ast }=T_{x_{+}}\wedge T_{x_{-}}$ and $\sigma ^{\ast}=T_{y_{+}}\wedge T_{y_{-}}$. Take any $a\leq z_+ \leq x\leq z_- \leq b$, set $\sigma =T_{z+}\wedge T_{z_{-}}$ and consider the set $A_{x}:=\left[x_{-},x_{+}\right] \cap \left[ z_{-},z_{+}\right] $, the following four cases are illustrated in Figure \ref{figure Nash}. 
First suppose that $A_{x}=\left[ x_{-},x_{+}\right] $ then
\begin{equation*}
h(y) =\left( \frac{G}{\psi}\right)(x_{-}) +\frac{\left( \frac{G}{\psi }\right)(x_{+}) -\left(\frac{G}{\psi}
\right)(x_{-})}{F(x_{+}) -F(x_{-})}(y-F(x_{-})) \geq \frac{(G)_{H}^{\ast\ast}(y)}{\psi(y)}
\end{equation*}
for all $y\in A_{x}$, moreover, $h$ is the largest $F$-convex function with $h(x_+)=(G/\psi)(x_+)$ and
$h(x_-)=(G/\psi)(x_-)$ so $R_{x}(\tau^{\ast },\sigma) =h(F(x))\psi(x) \geq R_{x}(\tau^{\ast},\sigma^{\ast})$. 
Secondly suppose that $A_{x}=\left[ z_{-},z_{+}\right] $ and let
\begin{equation*}
g(y) =\left( \frac{H}{\psi}\right)(z_{-})+\frac{\left(\frac{H}{\psi}\right)(z_{+})-\left(\frac{H}{\psi}
\right)(z_{-})}{F(z_{+})-F(z_{-})}(y-F(z_{-}))
\end{equation*}
for $y\in A_{x}$. Define $\widetilde{H}(y):=H(y)\mathbb{I}_{(x_{-},x_{+})} +G(y)
\mathbb{I}_{[y=x_{-}] \cup [y=x_{+}]}$ for $y\in [x_{+},x_{-}] $, then the convex biconjugate of
$\widetilde{W}_H(y):=(\widetilde{H}/\psi)\circ F^{-1}(y)$ satisfies $g(y) \geq \widetilde{W}_H^{\ast\ast}(y)$
for all $y\in A_{x}$. Moreover, Lemma \ref{lemma coincides with dual} implies that 
$\widetilde{W}_H^{\ast\ast}(y)=(W^G)_H^{\ast\ast}(y)$ so $R_{x}(\tau^{\ast},\sigma)=g(F(x))\psi(x)
\geq R_{x}(\tau^{\ast},\sigma^{\ast})$. Thirdly, let $A_{x}=[x_{-},z_{+}]$ then
\begin{equation*}
f(y) =\left(\frac{G}{\psi}\right)(x_{-})+\frac{\left(\frac{H}{\psi}\right)(z_{+})-\left(\frac{G}{\psi}
\right)(x_{-})}{F(z_{+}) -F(x_{-})}(y-F(x_{-}))
\end{equation*}
and as in the previous case $f(y)\geq W_H^{\ast\ast}(y)$ and $R_{x}(\tau^{\ast},\sigma)=f(F(x))\psi(
x) \geq (G)_{H}^{\ast\ast}(x) =R_{x}(\tau^{\ast},\sigma^{\ast})$. The final case that $A_{x}=[z_{-},x_{+}]$ 
follows by a symmetric argument. Thus we have shown that the second inequality in (\ref{saddle point}) holds.
The first inequality in (\ref{saddle point}) can be shown using a similar argument and 
thus the stopping times (\ref{stopping times}) are a saddle point.
\end{proof}

\begin{figure}[h!]
\begin{center}
\begin{tabular}{cc}
\includegraphics[width=7cm]{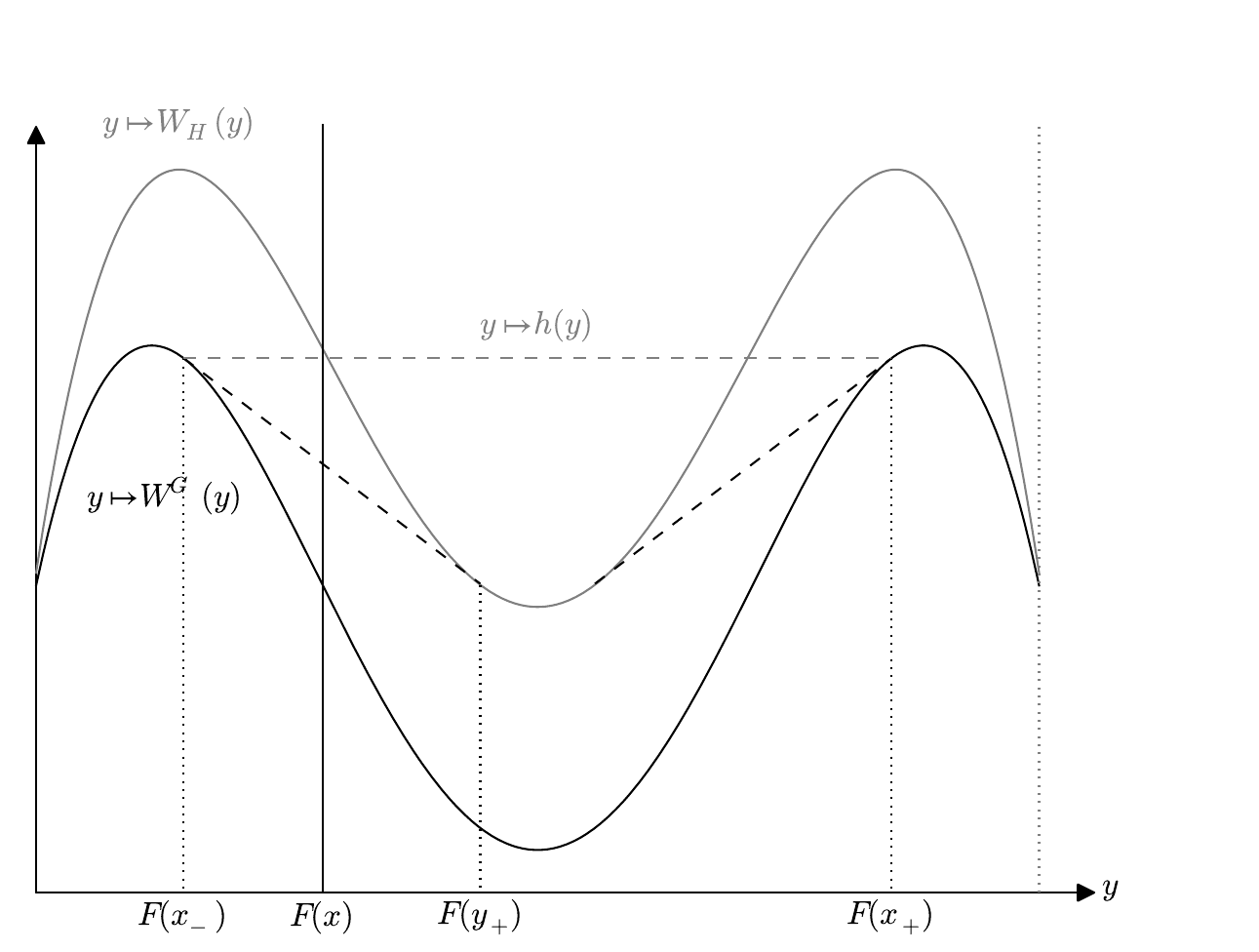} & \includegraphics[width=7cm]{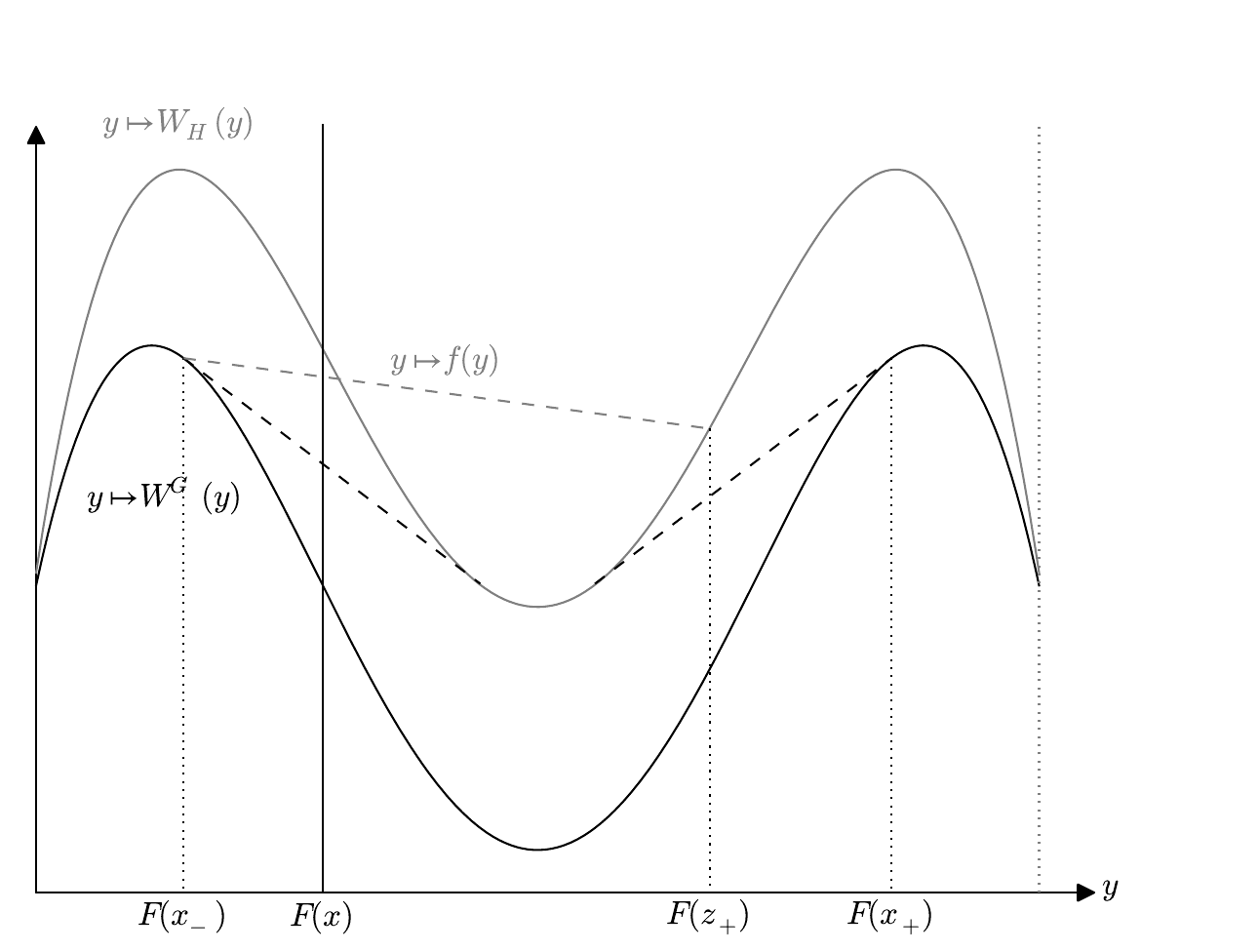} \\
\includegraphics[width=7cm]{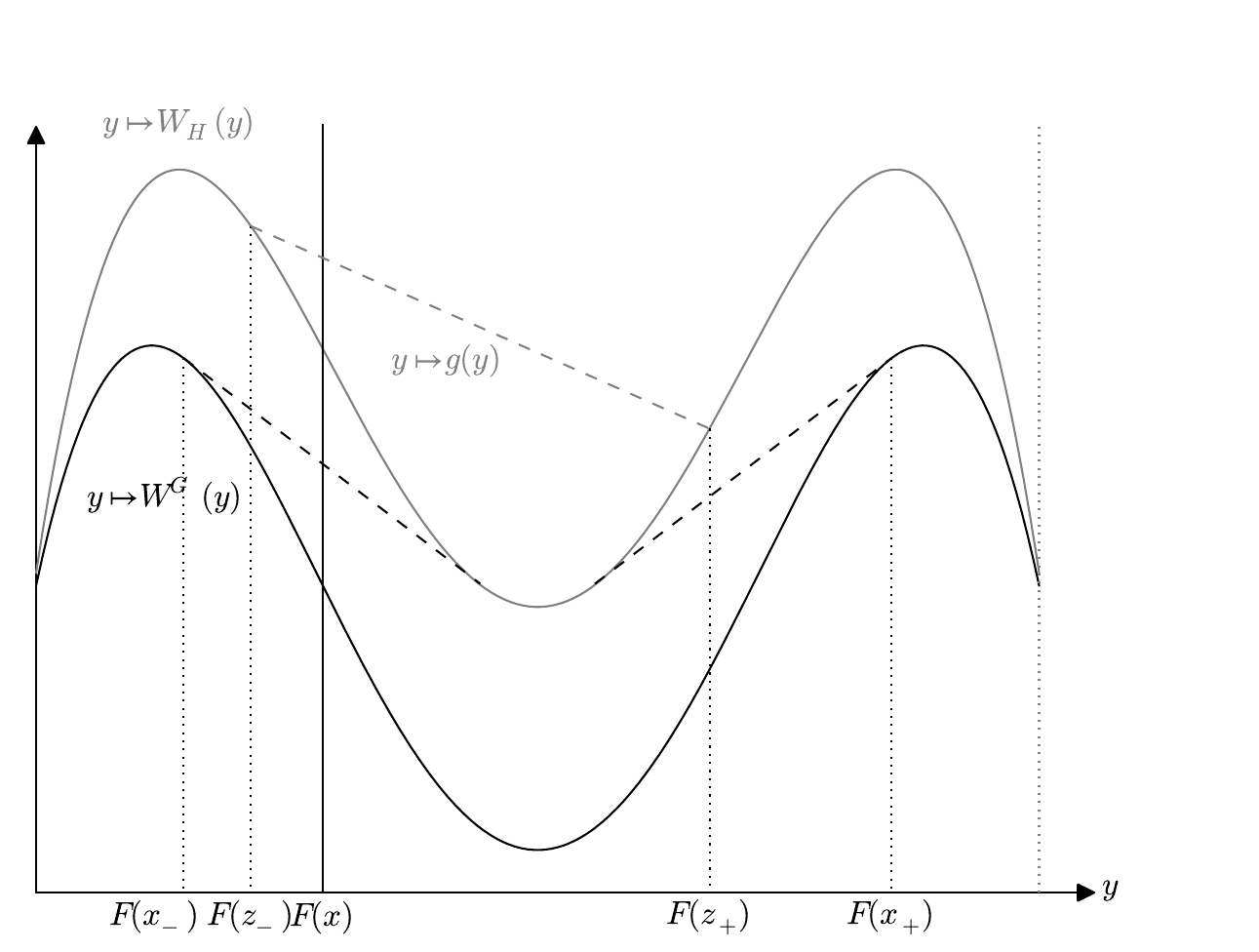} & \includegraphics[width=7cm]{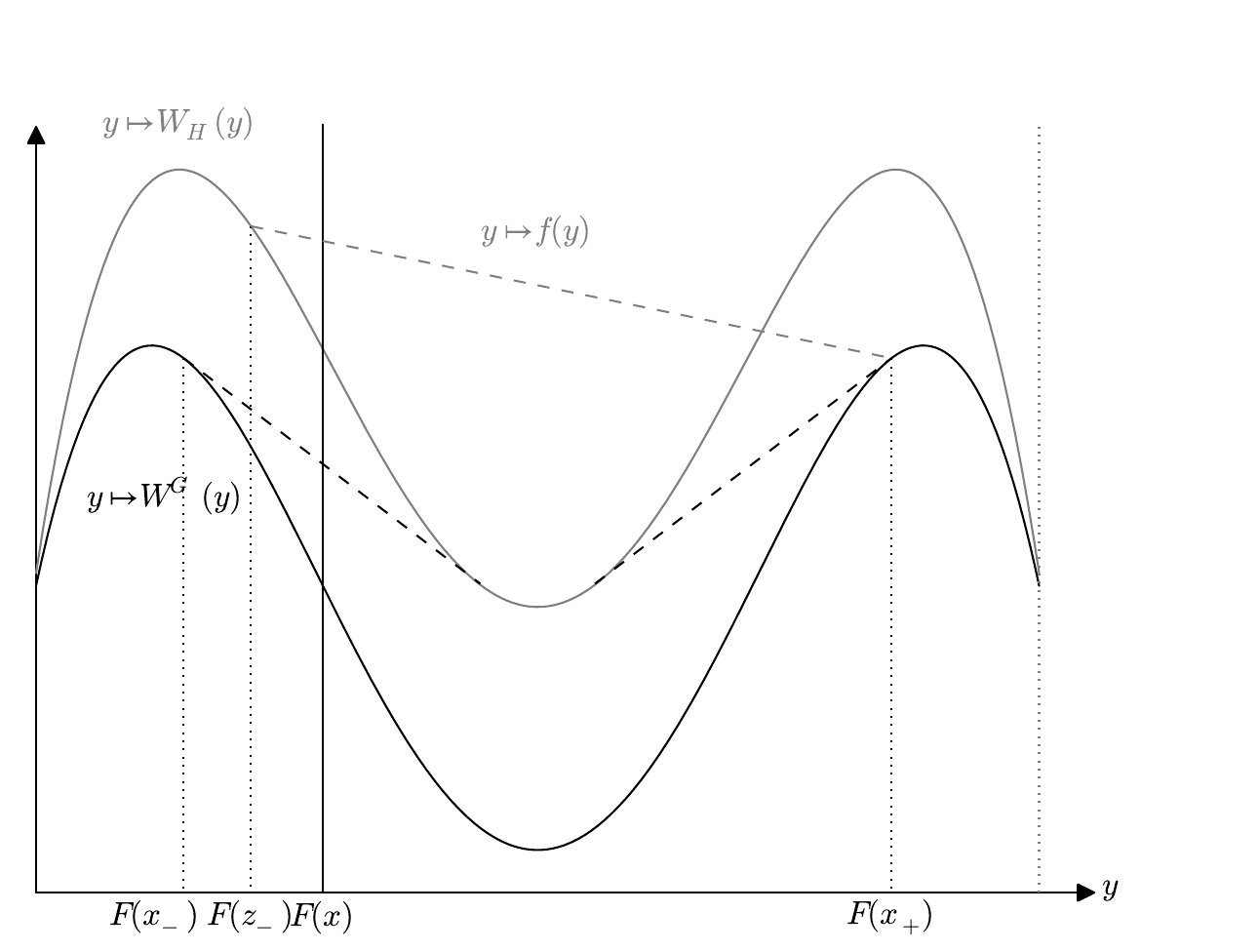}
\end{tabular} 

\begin{minipage}{0.7\textwidth}
\caption[Figure illustrating the three cases used in Theorem \ref{Theorem Nash}]
{For particular choices of $[z_-,z_+]$, these four figures illustrate that the line $f$, $g$ or $h$ dominates the black dotted line $y\mapsto (W^G)_H^{\ast\ast}(y)$.
\label{figure Nash}} 
\end{minipage}
\end{center}
\end{figure}

\medskip

The previous Theorem shows that when assumptions (\ref{integ assumption}) and (\ref{boundary assumption}) hold, the value function $V$ derived in Theorem \ref{Theorem Game has value} can be used to characterise a Nash equilibrium of the optimal stopping game. On the other hand, if we assume a Nash equilbrium exists, Theorem 2.1 in \cite{Peskir2} shows that the semiharmonic characterisation holds. Consequently, it follows from Lemma \ref{lemma coincides with dual} that the equality between the two representations of the generalised Legendre transform in Theorem \ref{Theorem duality} holds (which is the argument presented in \cite{Peskir}). However, in Section \ref{section legendre transform} it has been shown that the two representations of the extension of the Legendre transform in Theorem \ref{Theorem duality} coincide without the need to appeal to the result in \cite{Peskir2}. 

The next corollary relaxes the assumptions (\ref{integ assumption}) and (\ref{boundary assumption})
and is a purely analytical version of \cite{EV} Theorems 4.4 and 4.5. 

\begin{corollary}
\label{corollary Nash}
Define $U(X_{\tau}(\omega))=0$ on the event $\{\tau =+\infty\}$ for any Borel measurable
function $U$. Take $G\leq H$ such that (\ref{integ assumption}) holds, and consider the optimal stopping game (\ref{lower value})-(\ref{upper value}). Let
\begin{equation}
l_{a}:=\lim_{x\downarrow a}\frac{G(x)\vee 0}{\psi(x)} \qquad , \qquad
l_{b}:=\lim_{x\uparrow b}\frac{G(x)\vee 0}{\varphi(x)} .
\end{equation}
and assume that $l_a \in [W^G(0+),W_H(0+)]$ and $l_b=0$.
\begin{enumerate}

\item Extend $W^G$ and $W_H$ onto $\mathbb{R}_+$ by setting $W^G(0)=[W^G(0+),l_a]$ and $W_H(0)=[l_a,W_H(0+)]$. The value of the optimal stopping game (\ref{lower value})-(\ref{upper value}) is
\begin{equation*}
V(x)=\left( G\right) _{H}^{\ast \ast }(x)=(W^G)^{\ast\ast}_H(F(x))\psi(x)
\end{equation*}
for all $x\in I$ where $(W^G)^{\ast\ast}_H$ is as defined in Corollary \ref{corollary extend} with $w_0 =l_a$.

\item When $l_{a}=0$ the optimal stopping game (\ref{lower value})-(\ref{upper value})
has a Nash equilibrium and $(\tau^{\ast},\sigma^{\ast})$ is a saddle point.

\item When $l_{a}>0$ and there exists $l>a$ such that $(G)_{H}^{\ast\ast}(x)>G(x)$ for all $x\in(a,l)$
then the optimal stopping game (\ref{lower value})-(\ref{upper value}) does not have a Nash equilibrium.

\item When $l_{a}>0$ and there exists $l>a$ such that $(G)_{H}^{\ast \ast}(x)=G(x)$ and/or
$(G)_{H}^{\ast \ast}(x)=H(x)$ for all $x \in (a,l)$ then the optimal
stopping game (\ref{lower value})-(\ref{upper value}) has a Nash equilibrium
and $(\tau^{\ast},\sigma^{\ast})$ is a saddle point.

\end{enumerate}
\end{corollary}

\begin{proof}
(1) In the case that $l_a = W^G(0+)$ the first part was shown in Theorem \ref{Theorem Game has value}. When $l_a > W^G(0+)$, $l_a = 0$ and $G(a+)=-\infty$. In this case the functions $W^{G}$ and $W_H$ are extended onto $\mathbb{R}_+$ by setting $W^{G}(0)=[W^G(0+),l_a]$ and $W_H(0)=[[l_a,W_H(0+)]$. This ensures that the function $(W^{G})_{H}^{\ast\ast}$ introduced in Corollary \ref{corollary extend} has $(W^{G})_{H}^{\ast\ast}(0)=(W^{G})_{H}^{\ast\ast}(0+) = 0$. Furthermore, it can be shown using the same approach as in the proof of Theorem \ref{Theorem Game has value} that the value of this optimal stopping game is $V(x)=(W^G)^{\ast\ast}_H(F(x))\psi(x)$ for all $x\in I$.

(2) The boundaries of the diffusion $X$ are natural so $P_x(T_a < \infty)=0$ for all $x\in I$. Thus, to study whether or not Nash equilibrium exists we extend $V$ onto $[a,b)$ by setting $V(a)=V(X_{T_a})=0$.
Let $W^V(x):=(W^{G})_{H}^{\ast\ast}(x)$ for $x\in (0,\infty)$ and $W^V(0):=0$. When $l_a=0$, $W^{V}$ and is continuous on $\mathbb{R}_+$, so the first case follows in exactly the same way as in Theorem \ref{Theorem Nash}.

(3) In the case that $l_a>0$ the function $W^{V}$ is lower-semicontinuous on $\mathbb{R}_+$. In this
case there exists a maximising sequence of stopping times but their limiting value need not be attained. 
To see this, let $x_{\pm}$, $y_{\pm}$ are defined as in
(\ref{tau interval})-(\ref{sigma interval}) and suppose that $x_{-} = y_{-}=a$. Recall that $R_x(\tau,\sigma)$ is defined in (\ref{game objective function}). Take $\tau_{n}=T_{a+1/n} 
\wedge T_{x_{+}\wedge y_+}$ then by assumption 
$\lim_{n\rightarrow \infty}\tau_n = \tau^{\ast}$ and the approach used in the proof of Theorem 
\ref{Theorem Nash} shows that $R_x(\tau_n,\sigma^{\ast})\leq (G)_{H}^{\ast\ast}(x)$ for all 
$n\geq 1$ and $x\in (a,x_+\wedge y_+)$ and by construction 
$\lim_{n \rightarrow \infty}R_x(\tau_n,\sigma^{\ast})=(G)_{H}^{\ast\ast}(x)$. However, 
using the convention at the boundary, 
\begin{align*}
R_x(\tau^{\ast},\sigma^{\ast}) &= E_x\left[e^{-rT_{x_+ \wedge y_+}}(G(X_{T_{x_+}})\mathbb{I}_{[x_+\leq y_+]} + H(X_{T_{y_+}})\mathbb{I}_{[y_+<x_+]})
\mathbb{I}_{[T_{x_+ \wedge y_+}\leq T_a]}\right] \\
&= \left( \frac{G(x_+)}{\psi(x_+)}\frac{F(x)-F(a)}{F(x_+)-F(a)}\mathbb{I}_{[x_+\leq y_+]} 
+ \frac{H(y_+)}{\psi(y_+)}\frac{F(x)-F(a)}{F(y_+)-F(a)}\mathbb{I}_{[y_+<x_+]}\right)\psi(x) \\
&= G(x_+)\frac{\varphi(x)}{\varphi(x_+)}\mathbb{I}_{[x_+\leq y_+]} + H(y_+)
\frac{\varphi(x)}{\varphi(y_+)}\mathbb{I}_{[y_+< x_+]}.
\end{align*}
for all $x\in (a,x_+ \wedge y_+)$. Moreover,
\begin{equation*}
R_x(\tau_n,\sigma^{\ast}) = G(a+1/n)\frac{\psi(x)}{\psi(a+1/n)}\frac{F(x_+ \wedge y_+)-F(x)}{F(x_+ \wedge y_+)-F(a+1/n)}
+ R_x(\tau^{\ast},\sigma^{\ast}).
\end{equation*}
Since $F(a+)=0$ it follows that
\begin{equation*}
\lim_{n\rightarrow \infty}R_x(\tau_n,\sigma^{\ast}) = l_a\left(1-\frac{F(x)}{F(x_+ \wedge y_+)}\right)\psi(x) 
+ R_x(\tau^{\ast},\sigma^{\ast}) > R_x(\tau^{\ast},\sigma^{\ast})
\end{equation*}
and the first term is strictly positive as the function $F$ is strictly increasing. Thus a saddle point does not exist as the limit of the maximising sequence is not attained.

(4) The situation described above can not occur under the assumptions of part (3) of the corollary because
in this case it is not possible to take $x\in I$ such that $x_- \vee y_- = a$ where $x_{\pm}$ and
$y_{\pm}$ are as defined in (\ref{tau interval})-(\ref{sigma interval}).
\end{proof}

\medskip

A similar approach can be taken to study the case that $l_b>0$ but requires Theorem \ref{Theorem Game has value} and Theorem \ref{Theorem Nash} to be formulated in terms of the other ratio of the fundamental solutions using Corollary \ref{corollary general case}.

\begin{remark}
The previous result is heavily dependent on the assumption that
$U(X_{\tau}(\omega))=0$ on the event $\{ \tau =+\infty\}$. This assumption extends any continuous function $U:I\rightarrow \mathbb{R}$ onto $[a,b]$ by setting $U(a)=U(b)=0$. Consequently, the associated function $W^U(y):=(U/\psi)\circ F^{-1}(y)$ is extended onto $\mathbb{R}_+$ y setting $W^U(0)=0$. Hence, the function $U$ (resp. $W^U$) need not be upper-semicontinuous on $[a,b)$ (resp. $\mathbb{R}_+$) which allows for the possibility that the value associated with the optimising sequence of stopping times need not be attained. If instead we extend $G$ and $H$ onto the closure of $I$ using the approach discussed in Remark \ref{Remark duality} the optimal stopping game (\ref{lower value})-(\ref{upper value}) has a saddle point in all the cases discussed in Corollary \ref{corollary Nash}.
\end{remark}

We conclude by examining the Israeli-$\delta$ put-option introduced in \cite{Ky}.

\begin{example}[Israeli-$\delta$ Put]
Suppose that $X$ is a geometric Brownian motion. That is $X$ solves the SDE
\begin{equation*}
dX_{t}=rX_{t}\,dt+\sigma X_{t}\,dW_{t}
\end{equation*}
with initial point $X_0=x \in (0,\infty)$ where $r>0$ is the discount rate and $\sigma >0$ is a volatility parameter. In this case
\begin{equation*}
\mathbb{L}_{X}u(x)=rx\frac{du}{dx}(x) +\frac{1}{2}\sigma ^{2}\frac{%
d^{2}u}{dx^{2}}(x) 
\end{equation*}
and the two fundamental solutions to $\mathbb{L}_{X}u=ru$ are $\varphi(x)=x$ and $\psi(x) =x^{-2r/\sigma ^{2}}$. 
Moreover, 
\begin{equation*}
-\widetilde{F}(x) =\left( \frac{\psi}{\varphi }\right)(x) =x^{-( 1+2r/\sigma ^{2})}  
\quad, \quad F(x)= \left( \frac{\varphi}{\psi}\right)(x) = x^{1+2r/\sigma ^{2}} . 
\end{equation*}
and $(-\widetilde{F})^{-1}(y) =y^{-\alpha}$, $F^{-1}(y)=y^{\alpha}$ where $\alpha =1/(1+2r/\sigma ^{2}) \in [ 0,1]$. For some $K>0$ and $\delta>0$, let
\begin{equation*}
G(x) = (K-x)^{+} \quad ,\quad H(x) = (K-x)^+ + \delta . 
\end{equation*}
The maximising agent has bought a perpetual American put option with strike $K>0$ from the minimising agent but 
the minimising player retains the right to cancel the option by paying a fixed penalty of $\delta>0$. 
The gains functions are rescaled by taking 
\begin{align*}
\widetilde{W}^G(y) &=\left( \frac{G}{\varphi}\right) \circ
(-\widetilde{F})^{-1}(y) =( K y^{\alpha}-1)^{+}, \\
\widetilde{W}_H(y) &=\left( \frac{H}{\varphi }\right) \circ
(-\widetilde{F})^{-1}(y) = (K y^{\alpha} -1)^+ +\delta y^{\alpha }.
\end{align*}
for $y\in (0,\infty)$. The functions $G$, $H$ are illustrated in the left panel of Figure \ref{figure delta-put} and the functions $\widetilde{W}^G$ and $\widetilde{W}_H$ are illustrated in the right panel of Figure \ref{figure delta-put}. The functions $\widetilde{W}^G$ and $\widetilde{W}_H$ are concave on the interval $[K^{-1/\alpha},+\infty)$ and $\widetilde{W}_H$ is concave on the interval $[0,K^{-1/\alpha}]$.  

The problem is degenerate in the sense that the inf-player will choose $\sigma^{\ast}
=+\infty$ and the game has the same value as the perpetual put option examined in Example \ref{Example Legendre put}
if the concave biconjugate of $\widetilde{W}^G$ minorises $\widetilde{W}_H$, i.e.
$\widetilde{W}^G_{\ast \ast }(y) \leq \widetilde{W}_H(y)$ for all $y\in \mathbb{R}_{+}$
which is equivalent to 
\begin{equation*}
\delta \geq \widetilde{W}^G_{\ast\ast}(K^{-1/\alpha})\varphi(K) = \frac{\sigma ^{2}}{2r}\frac{K}{\left( 1+\frac{\sigma ^{2}}{%
2r}\right) ^{1+2r/\sigma ^{2}}} =: \delta ^{\ast }.
\end{equation*}
To avoid this case assume that $\delta \in (0,\delta^{\ast}]$.
On $[0,K^{-1/\alpha}]$ the function $\widetilde{W}_H$ is concave so 
\begin{equation*}
(\widetilde{W}^G)_H^{\ast\ast}(y) \leq \left(\frac{\widetilde{W}_H( 
K^{-1/\alpha})}{K^{-1/\alpha }}\right)y=\frac{\delta}{K}K^{1/\alpha}y 
\quad \forall y\in [0,K^{-1/\alpha}]. 
\end{equation*}
In fact this inequality holds with equality as for all $c<\widetilde{W}_H (K^{-1/\alpha})/K^{-1/\alpha}$
\begin{equation*}
cy \vee \widetilde{W}^G(y)<\widetilde{W}_H(y) \quad \forall \,y\in(0,\infty).
\end{equation*} 
On $[K^{-1/\alpha},\infty)$, we can use an approach similar to that used in Example \ref{Example Legendre put}. 
There exists $y^{\ast}$ such that
\begin{equation*}
\widetilde{W}_H(K^{-1/\alpha}) =\widetilde{W}^G(y^{\ast})-\frac{d}{dy}\widetilde{W}^G(y^{\ast})
(y-K^{-1/\alpha}) . 
\end{equation*}
This $y^{\ast}$ solves the polynomial
\begin{equation*}
\frac{\delta}{K}+1=Ky^{\alpha}(1-\alpha ) +\alpha y^{\alpha -1}K^{( \alpha
-1) /\alpha} .
\end{equation*}
Let $x^{\ast}$ be defined as $y^{\ast}=(x^{\ast})^{-1/\alpha}$, then the constant $x^{\ast}$ satisfies 
\begin{equation*}
(1+2r/\sigma^2)\left( 1+\frac{\delta }{K}\right)\left(\frac{x}{K}\right) 
=2r/\sigma^{2}+\left( \frac{x}{K}\right)^{(1+2r/\sigma ^{2})} 
\end{equation*}
which is the same condition as in \cite{Ky}. Moreover, $(\widetilde{W}^G)_{H}^{\ast\ast}(y) =
\widetilde{W}^G(y)$ for $y\geq y^{\ast}$ and on $[K^{-1/\alpha},y^{\ast}]$ we have 
\begin{eqnarray*}
(W^G)_{H}^{\ast\ast}(y) &=& \widetilde{W}_H(K^{-1/\alpha}) +\frac{\widetilde{W}^G(y^{\ast})
-\widetilde{W}_H(K^{-1/\alpha})}{y^{\ast}-K^{-1/\alpha}}(y-K^{-1/\alpha}) \\
&=&\frac{\delta}{K}+\left( K(y^{\ast})^{\alpha}-1-\frac{\delta}{K}\right) 
\frac{y-K^{-1/\alpha }}{y^{\ast}-K^{-1/\alpha}} \\
&=&\frac{\delta}{K} \frac{(x^{\ast})^{-1/\alpha}-y}{(x^{\ast})^{-1/\alpha}-K^{-1/\alpha}}
+\frac{1}{x^{\ast}}(K-x^{\ast}) \frac{y-K^{-1/\alpha}}{(x^{\ast})^{-1/\alpha}-K^{-1/\alpha}}.
\end{eqnarray*}
The function $y \mapsto (\widetilde{W}^G)_H^{\ast\ast}(y)$ is the black line in the lower panel 
of Figure \ref{figure delta-put}. 

\begin{figure}[h!]
\begin{center}

\begin{tabular}{cc}
\includegraphics[width=7cm]{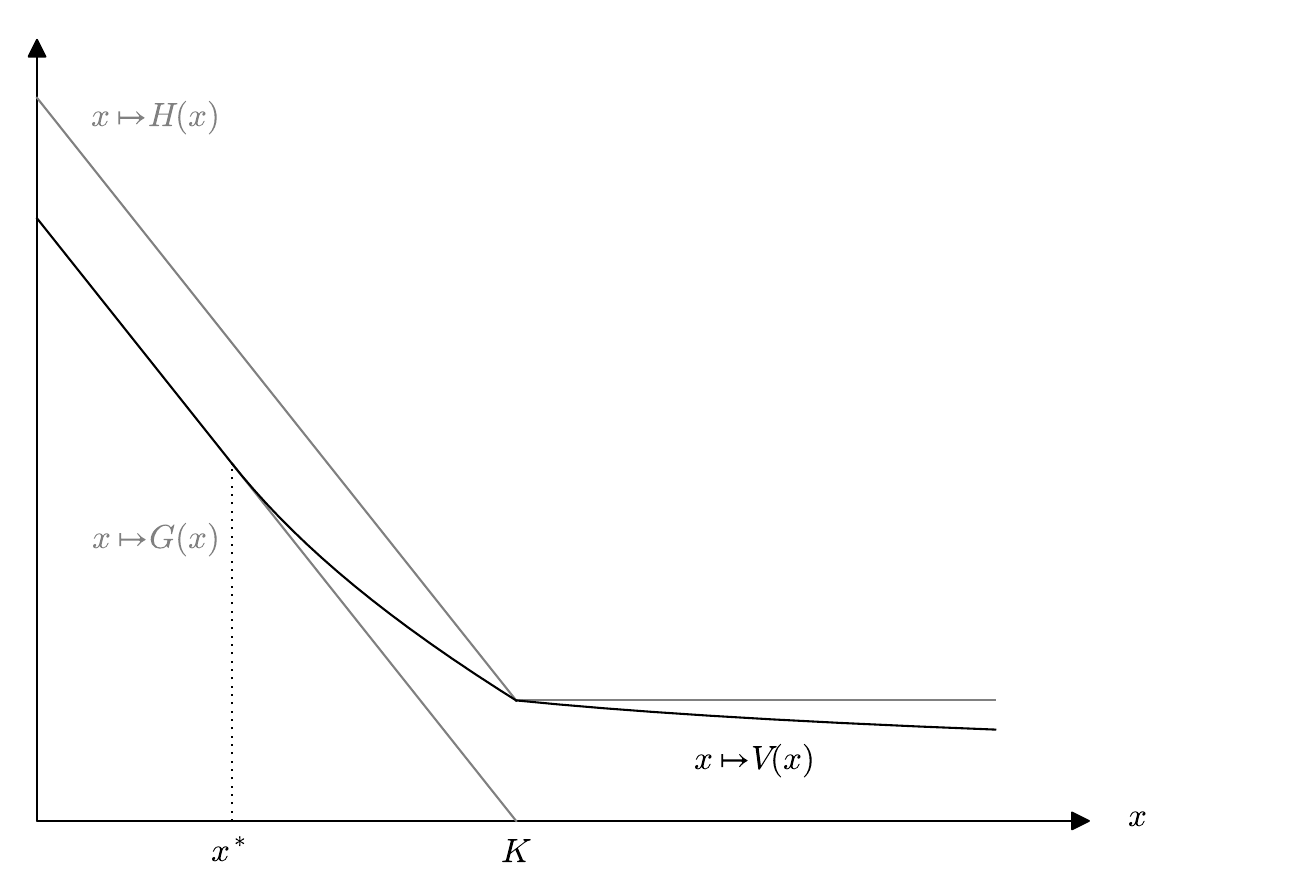} & \includegraphics[width=7cm]{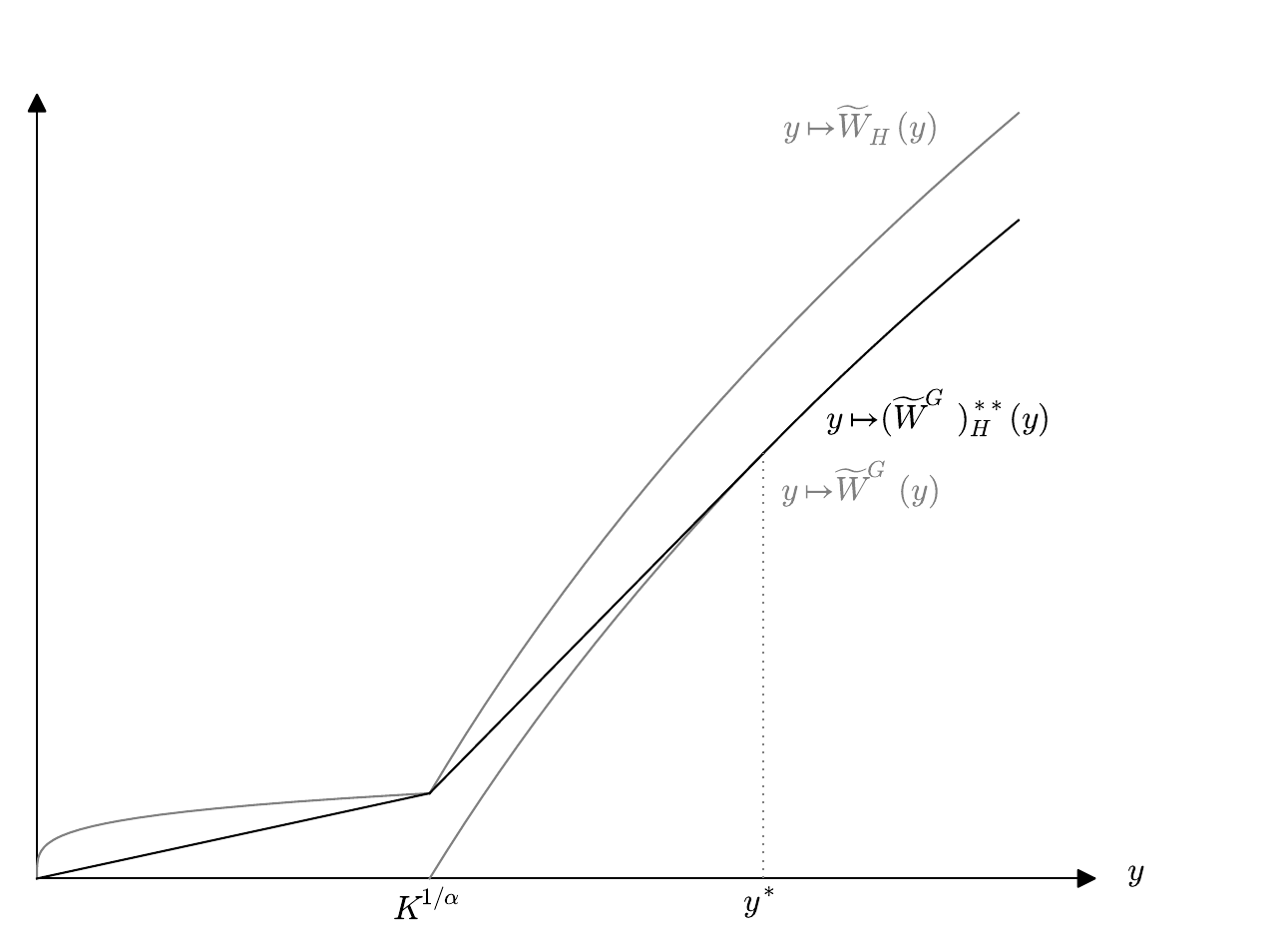}
\end{tabular} 

\begin{minipage}{0.7\textwidth}
\caption[Israeli-$\delta$ put option]
{The top figure illustrates the payoff functions of the game. The black line is the value function of the game. The lower panel illustrates the transformed payoffs $\widetilde{W}_H$ and $\widetilde{W}^G$ the black line is the function $(\widetilde{W}^G)_H^{\ast\ast}$.
\label{figure delta-put}} 
\end{minipage}
\end{center}
\end{figure}

\noindent 
According to Theorem \ref{Theorem Game has value}
the value of this game option satisfies $V(x) =(\widetilde{W}^G)_H^{\ast\ast}
(-\widetilde{F}(x))\varphi(x)$ so
\begin{equation*}
V(x) = \left\{
\begin{array}{cl}
(K-x) & \mathrm{for}\, x< x^{\ast} \\
\delta \left( \frac{x}{K}\right) \frac{(x^{\ast})^{-1/\alpha}-x^{-1/\alpha}}{(x^{\ast})^{-1/\alpha}-K^{-1/\alpha}}
+(K-x^{\ast}) \left( \frac{x}{x^{\ast }}\right) \frac{x^{-1/\alpha}-K^{-1/\alpha}}{
(x^{\ast})^{-1/\alpha}-K^{-1/\alpha}} & \mathrm{for}\, x\in [x^{\ast},K] \\
\delta \left( \frac{x}{K}\right)^{(\alpha-1)/\alpha} &\mathrm{for}\, x>K
\end{array}
\right.
\end{equation*}
which can be rearranged into the form derived in \cite{Ky}.
In this example assumptions (\ref{integ assumption}) and (\ref{boundary assumption}) hold so Theorem 
\ref{Theorem Nash} tells us that a saddle point of the game option is
\begin{equation*}
\tau^{\ast}=\inf \left\{ \left. t\geq 0\,\right\vert \,X_{t}\leq x^{\ast}\right\}
\quad , \quad  \sigma^{\ast}=\inf \left\{ \left. t\geq 0\,\right\vert
\,X_{t}=K\right\} . 
\end{equation*}
\end{example}


\begin{thebibliography}{99}

\bibitem{BK} Baurdoux, E. J., \& Kyprianou, A. E. (2004) Further calculations for Israeli options. Stochastics, 76(6), 549-569.

\bibitem{BF2} Bensoussan, A., \& Friedman, A. (1974) Nonlinear variational inequalities and differential games with stopping times. J. Funct. Anal. 16, 305-352.

\bibitem{BF3} Bensoussan, A., \& Friedman, A. (1977) Nonzero-sum stochastic differential games with stopping times and free boundary problems. Trans. Amer. Math. Soc. 231, 275-327.

\bibitem{BS} Borodin, A. N., \& Salminen, P. (2002) Handbook of Brownian Motion - Facts and Formulae. Probability and its Applications. 2nd Edition. Birkhauser, Basel.
 
\bibitem{Day} Dayanik. S. (2008) Optimal stopping of linear diffusions with random discounting. Math. Oper. Res. 33(3), 645-661.

\bibitem{DK} Dayanik, S., \& Karatzas, I. (2003) On the optimal stopping problem for one-dimensional diffusions. Stochastic Process. Appl., 107(2), 173-212.

\bibitem{Dynkin2} Dynkin, E. B. (1963) The optimum choice of the instant
for stopping a Markov process. Soviet Math. Dokl. 4, 627-629.

\bibitem{Dyn} Dynkin, E. B. (1965) Markov Processes. Springer-Verlag.

\bibitem{Dynkin3} Dynkin, E. B. (1969) Game variant of a problem of optimal stopping. Soviet Math. Dokl. 10, 270-274.

\bibitem{DY} Dynkin, E. B., \& Yushkevich, A. A. (1969) Markov Processes: Theorems and Problems. Plenum Press, New York.

\bibitem{EP} Ekstr\"{o}m, E., \& Peskir, G. (2008) Optimal stopping games for Markov processes. SIAM J. Control Optim. 47(2), 684-702.

\bibitem{EV} Ekstr\"{o}m, E., \& Villeneuve, S. (2006) On the value of optimal stopping games. Ann. Appl. Probab. 16(3), 1576-1596.

\bibitem{Gapeev} Gapeev, P. V. (2005) The spread option optimal stopping game. In A. E. Kyprianou, W. Schoutens \& P. Wilmott (Eds.), Exotic Option Pricing and Advanced L\'{e}vy Models (pp. 293-305). John Wiley. 

\bibitem{Kifer} Kifer, Y. (2000) Game options. Finance Stoch. 4(4), 443-463.

\bibitem{KK} K\"{u}hn, C., \& Kyprianou, A. E. (2007) Callable puts as composite exotic options. Math. Finance 17, 487-502.

\bibitem{KKS} K\"{u}hn, C., Kyprianou, A. E., \& Schaik, K. van (2007) Pricing Israeli options: a pathwise approach. Stochastics. 79, 117-137.

\bibitem{Ky} Kyprianou, A. E. (2004) Some calculations for Israeli options. Finance Stoch. 8, 73-86.

\bibitem{Peskir2} Peskir, G. (2009) Optimal stopping games and Nash equilibrium. Theory Probab. Appl. 53(3), 558-571.

\bibitem{Peskir} Peskir, G. (2012) A duality principle for the Legendre transform. J. Convex Anal. 19(3), 609-630.

\bibitem{PS} Peskir, G., \& Shiryaev, A. N. (2006) Optimal Stopping and Free-Boundary Problems. Lectures in Mathematics, ETH Z\"{u}rich, Birkh\"{a}user.

\bibitem{RY} Revuz, D., \& Yor, M. (1991) Continuous Martingales and Brownian Motion. Springer-Verlag, Berlin, Heidelberg, New York.

\bibitem{Rock} Rockafellar, R. T. (1970) Convex Analysis. Princeton Univ. Press.

\bibitem{RW} Rogers, L. C. G., \& Williams, D. (1994) Diffusions, Markov Processes and Martingales. Vol 2: It\^{o} Calculus. Cambridge University Press.

\end{thebibliography}
\end{document}